\documentclass[12pt]{amsart}
\usepackage[english]{babel}
\usepackage{amsmath}
\usepackage{changes}
\usepackage{amsthm}
\usepackage{amsfonts} 
\usepackage{hyperref}
\usepackage{epsfig}
\usepackage{amssymb}
\usepackage{mathrsfs}
\DeclareMathOperator*{\argmin}{arg\,min}

\addtolength{\textwidth}{1.4cm}
\hoffset=-0.7cm

\numberwithin{equation}{section}

\allowdisplaybreaks

\def\parent#1{#1^{-1}}
\def\r{\varrho}



\newcommand{\eqa}{\begin{eqnarray}}
\newcommand{\ena}{\end{eqnarray}}
\newcommand{\eq}{\begin{equation}}
\newcommand{\en}{\end{equation}}
\newcommand{\eqs}{\begin{eqnarray*}}
\newcommand{\ens}{\end{eqnarray*}}

 

 
\def\r{\varrho} 


 
 

\newcommand{\Z}     {\mathbb{Z}} 
\newcommand{\N}     {\mathbb{N}} 
\renewcommand{\P}   {\mathbb{P}} 
\newcommand{\E}     {\mathbb{E}} 
\newcommand{\Go}     {{\scriptsize RWRC}}
\newcommand{\Or}     {{\scriptsize ORRW}}

\def\1{{\mathchoice {1\mskip-4mu\mathrm l}      
{1\mskip-4mu\mathrm l} 
{1\mskip-4.5mu\mathrm l} {1\mskip-5mu\mathrm l}}} 
\newcommand{\ssup}[1] {{{\scriptscriptstyle{({#1}})}}} 
\def\comment#1{} 
 
 

 

\renewcommand{\d}{{\rm d}} 
 
\newcommand{\eps}{\varepsilon}


\newcommand{\Ccal}   {{\mathcal C }}

\newcommand{\Tcal}   {{\mathcal T }} 



\def\ignore#1{}

\def\Def{\ :=\ }

\def\C{\mathcal{C}}

\def\bP{\mathbf{P}}

\def\bE{\mathbf{E}}

\def\parent#1{#1^{-1}}

\newtheorem{theorem}{Theorem}
\newtheorem{proposition}[theorem]{Proposition}
\newtheorem{lemma}[theorem]{Lemma}
\newtheorem{remark}[theorem]{Remark}

\newtheorem{definition}[theorem]{Definition}
\newtheorem{corollary}[theorem]{Corollary}

\renewcommand{\epsilon}{\varepsilon}

\title[The branching-ruin number]{The branching-ruin number as critical parameter of random processes on trees
 }
\date{}
\author[A.~Collevecchio]{Andrea Collevecchio}
\address{Andrea Collevecchio\\ School of Mathematical Sciences, Monash  University, Melbourne} \email{andrea.Collevecchio@monash.edu}
\author[C.-B.~Huynh]{Cong Bang Huynh}
\address{Cong Bang Huynh\\ Univ. Grenoble Alpes, CNRS, Institut Fourier, F-38000 Grenoble, France}
\email{cong-bang.huynh@univ-grenoble-alpes.fr}
\author[D.~Kious]{Daniel Kious}
\address{Daniel Kious\\ NYU-ECNU Institute of Mathematical Sciences at NYU Shanghai} \email{daniel.kious@nyu.edu}

\keywords{Random conductance model, cookie random walk, heavy tailed distribution, recurrence, transience, branching number, branching-ruin number}

\begin{document}

\begin{abstract}
The {\it branching-ruin number} of a tree, which describes its asymptotic growth and geometry, can be seen as a polynomial version of the branching number. This quantity was  defined by Collevecchio, Kious and Sidoravicius (2018) in order to  understand the phase transitions of the once-reinforced random walk ({\Or }) on trees. Strikingly, this number was proved to be equal to the critical parameter of {\Or } on trees.\\
In this paper, we continue the investigation of the link between the branching-ruin number and the criticality of random processes on trees.\\
First, we study random walks on random conductances on trees, when the conductances have an heavy tail at $0$, parametrized by some $p>1$, where $1/p$ is the exponent of the tail. We prove a phase transition recurrence/transience with respect to $p$ and identify the critical parameter to be equal to the branching-ruin number of the tree.\\
Second, we study a multi-excited random walk on trees where each vertex has $M$ cookies and each cookie has an infinite strength towards the root. Here again, we prove a phase transition recurrence/transience and identify the critical number of cookies to be equal to the branching-ruin number of the tree, minus 1. This result extends a conjecture of  Volkov (2003). Besides, we study a generalized version of this process and generalize results of Basdevant and Singh (2009).
\end{abstract}

\maketitle
\section{Introduction}
Let us consider a random process on a tree which is parametrized with one parameter $p$. We say that this process undergoes a {\it phase transition}  if there exists a {\it critical parameter} $p_c$ such that the (macroscopic) behavior of the random process is significantly different for $p<p_c$ and for $p>p_c$. This is, for instance, the case of Bernoulli percolation on trees, biased random walks (see \cite{L90,LyoPem,LP})    or linearly edge-reinforced random walks \cite{Pemtree} on trees.\\
In \cite{L90}, R.~Lyons {proved the  following beautiful result.}  {Bernoulli percolation and biased random walks (among others) share  the  same critical parameter which is equal to the {\it branching number} of the tree.} The branching number, defined by Furstenberg \cite{Furs}, is, roughly speaking, a quantity that provides a precise information on the asymptotic growth and geometry of a tree, at the exponential scale (see \eqref{branchingdef} for a definition). For instance, for trees that are ``well-behaved'' ({such as  spherically symmetric trees}) and whose spheres of diameter $n$ have size $m^n$, the branching number is equal to $m$. {This description is actually not accurate as some trees have a peculiar geometry, and the size of their spheres is not a good indicator of their asymptotic complexity.}\\
The phase transition of the once-reinforced random walk was studied in \cite{CKS}. In order to see a phase transition, one needs to consider trees that grow polynomially fast (see \cite{KS16}), and therefore the branching number is not the quantity that would provide a relevant information in this case. Indeed, the branching number does not allow us to distinguish among trees with polynomial growth as the branching number of {\it any} tree with sub-exponential growth is equal to 1. In \cite{CKS}, it was proved that the critical parameter for the once-reinforced random walk on trees is equal to the {\it branching-ruin number} of the tree (see \eqref{branchingdef2}). The branching-ruin number of a tree is best described as the polynomial version of the branching number: if a well-behaved tree has spheres of size $n^b$, then the branching-ruin number of this tree is $b$. Again, this fact is not true in general because of the possible complex asymptotic geometry of trees.\\
The purpose of the current paper is to emphasize two other examples where the branching-ruin number appears as the critical parameter of a random process, as it was done for the branching number. We study random walks on random conductances with heavy-tails and a model of excited random walks called the $M$-digging random walk.
In the next two subsections, we describe our results. In the first one, we relate the branching-ruin number to the critical weight of the tails of the conductances. In the second result, we relate the critical number of cookies per site to the branching-ruin number and, in particular, our result extends a conjecture of Volkov \cite{Volk}.


\subsection{Random walk on heavy-tailed random conductances}  
First, we study random walks on random conductances in the case where the  conductances {have heavy tails at zero}.  Consider an infinite, locally finite, tree $\mathcal{T}$ with branching-ruin number $b$ (see \eqref{branchingdef2} for a definition). {Even though our results hold for any branching-ruin number, for the sake of the following explanations, let us temporarily assume that $b>1$, so that simple random walk is transient on this tree (see Theorem \ref{th:digging}, or \cite{CKS})}.
Assign i.i.d.~conductances, or weights, to each edge of $\mathcal{T}$ and let us define a nearest-neighbor random walk which jumps through an edge with a probability proportional to the conductance of this edge. This model is very classical and has been extensively study on various graph, including $\mathbb{Z}$ and $\mathbb{Z}^d$. The behavior of the walk depends on the common law of the conductances.\\
For instance, if the conductances are bounded away from $0$ and from the infinity, the behavior of the walk is close to the one of simple random walk and it will therefore be transient on $\mathcal{T}$, moving at a speed similar to that of simple random walk.\\
If the conductances can be very large, i.e.~unbounded and for instance with an heavy-tail at infinity, this should not affect the transience of the walk. Nevertheless, this would have an important impact on the time that the random walk spends on small areas of the environment. We do not prove anything in this direction in this paper as our main interest is in the recurrence/transience of the walk, but we would like to describe here what should happen. If the conductances can be extremely large with a not-so-small probability, then the walker will meet, here and there, an edge with an overwhelmingly large conductance and will cross this edge back-and-forth for a very large number of times before moving on. The consequence of this mechanism is that the random walker will spend most of its time on these {\it traps} and will move at a speed much smaller than simple random walk on the same tree. This phenomenon is reminiscent of Bouchaud's trap model, see \cite{fin02,ESZ,ESZ1,ESZ2}, or \cite{FK} where an explicit link is made between Bouchaud's trap model and biased random walk on random conductances.\\
The last possible scenario is when the conductances could be extremely small, which is what we are mainly interested in here. The extreme case would be percolation where the random walk is recurrent as soon as the percolation is subcritical. In our case, the conductances remain positive but have an heavy-tail at $0$. This creates ``barriers'' of edges with atypically small conductances that can make the walker come back to the root infinitely often, even when the tree is transient for simple random walk. Let us now describe our results. \\

Recall that $\mathcal{T}$ is an infinite, locally finite, tree and let $E$ be the set of all its edges. Let $(C_e)_{e\in E}$ be a collection of i.i.d.~random conductances that are almost surely positive. Moreover, assume  that 
\begin{equation}\label{distr-cond}
 {\bf P}\left(C_e \le \frac 1t\right) = \frac{L(t)}{t^m},  \qquad \mbox{for $t >0$},
\end{equation}
where $L:\mathbb{R}\to\mathbb{R}$ is a slowly-varying function. {For simplicity, we will also assume that ${\bf P}\left(C_e \ge 1\right)>0$ without loss of generality.}

For a realisation of the environment $(C_e)$, we can define a random walk on these conductances which jumps through an edge $e$ with a probability proportional to $C_e$.  
 For a formal definition of this random walk on random conductances (\Go), we refer to Section~\ref{RWRCs}.  In the following, we say that a walk is {\it transient} if it does not return to its starting point with positive probability. If a walk is not transient, it comes back to the root almost surely and it is called {\it recurrent}. {We also give a formal definition of recurrence and transience in Section~\ref{RWRCs}.}\\
{ Finally, the branching  -ruin number of $\Tcal$, formally defined in  \eqref{branchingdef2},  is  denoted by $br_r(\Tcal)$.}
\begin{theorem}\label{mainth}
Fix an infinite, locally finite, tree $\mathcal{T}$ and let $b =br_r(\mathcal{T})\in[0,\infty]$ be its branching-ruin number. If $b<1$, then {\Go } is recurrent. Assuming $b>1$, 
if $ m b >1$ then \Go$\,$   is transient and if $m b<1$ then it is recurrent.
\end{theorem}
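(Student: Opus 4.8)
The plan is to work throughout in the language of electrical networks: the \Go{} is the random walk on the network $(\Tcal,(C_e)_{e\in E})$, it is recurrent if and only if the effective resistance from the root to infinity is infinite, and transient otherwise (see \cite{LP}). Since modifying finitely many conductances changes neither recurrence nor transience, this dichotomy is a tail event of the i.i.d.\ field $(C_e)$, so in each regime it suffices to exhibit the claimed behaviour with positive probability. The two tools are the Nash--Williams criterion (for recurrence) and the existence of a unit flow of finite energy, i.e.\ Thomson's principle (for transience). The third ingredient is the cutset description of $b=br_r(\Tcal)$: writing $|e|$ for the generation of an edge $e$, for every $\lambda<b$ one has $\inf_\Pi\sum_{e\in\Pi}|e|^{-\lambda}>0$ over cutsets $\Pi$, so by max-flow--min-cut on a tree there is a unit flow $\theta$ from the root to infinity with $\theta(e)\le c_\lambda\,|e|^{-\lambda}$ for all $e$; and for every $\lambda>b$ that infimum vanishes, so there are finite, pairwise disjoint cutsets $\Pi_1,\Pi_2,\dots$ with $\min_{e\in\Pi_k}|e|\to\infty$ and $\sum_{e\in\Pi_k}|e|^{-\lambda}$ as small as we wish.

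\emph{Recurrence.} When $b<1$ the tree is so thin that the conductances are essentially irrelevant: $br_r(\Tcal)<1$ already forces simple random walk to be recurrent (Theorem~\ref{th:digging}), i.e.\ infinite effective resistance when all conductances equal $1$. To pass to arbitrary i.i.d.\ conductances one records the elementary fact that a path of length $\ell$ has resistance at least $c\ell$ with probability $1-e^{-c'\ell}$ — fixing $K$ with $\rho:=\mathbf P(C_e\le K)>0$, at least $\rho\ell/2$ of its edges satisfy $C_e\le K$ with that probability, each contributing resistance $\ge 1/K$ — and feeds these linear lower bounds into Nash--Williams along cutsets that follow the thin part of the tree, using Borel--Cantelli. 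When $b>1$ and $mb<1$, fix $\lambda\in(b,1/m)$ and disjoint cutsets $\Pi_k$ as above with $\sum_{e\in\Pi_k}|e|^{-\lambda}$ small, all edges in $\Pi_k$ of generation $\asymp N_k$, and $N_{k+1}>2N_k$. One then replaces each $\Pi_k$ by a \emph{barrier of small conductances}: below each $e\in\Pi_k$ one descends a further $\asymp N_k$ generations and, on each branch crossing that slab, picks an edge with $C_e\le A\,N_k^{-1/m}$; such an edge is present on a given branch of length $\asymp N_k$ with probability $1-e^{-cA^m}$ since $\mathbf P(C_e\le\eps)\asymp\eps^m$, and a union bound over the branches (whose total number is controlled by the cutset property, of order $N_k^{b+o(1)}$) together with Borel--Cantelli lets this be done simultaneously. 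The resulting cutset $\widetilde\Pi_k$ satisfies $\sum_{e\in\widetilde\Pi_k}C_e\lesssim N_k^{b+o(1)}\cdot N_k^{-1/m}=N_k^{\,b-1/m+o(1)}$, whose reciprocal is $N_k^{\,1/m-b+o(1)}$; since $1/m-b>0$ and the slabs are disjoint, Nash--Williams produces a divergent series, hence recurrence.

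\emph{Transience.} Assume $b>1$ and $mb>1$ and pick $\lambda$ with $\max\{1,1/m\}<\lambda<b$; let $\theta$ be a unit flow with $\theta(e)\le c_\lambda|e|^{-\lambda}$. I want the energy $\mathcal E(\theta)=\sum_e C_e^{-1}\theta(e)^2$ to be finite. If $m>1$ then $\mathbf E[C_e^{-1}]<\infty$, and since $\theta$ depends only on the deterministic tree, using $\theta(e)^2\le c_\lambda|e|^{-\lambda}\theta(e)$ and flow conservation $\sum_{|e|=n}\theta(e)=1$,
\[
\mathbf E\big[\mathcal E(\theta)\big]\le c_\lambda\,\mathbf E\!\left[C_e^{-1}\right]\sum_e\theta(e)\,|e|^{-\lambda}=c_\lambda\,\mathbf E\!\left[C_e^{-1}\right]\sum_{n\ge1}n^{-\lambda}<\infty
\]
because $\lambda>1$, so $\mathcal E(\theta)<\infty$ a.s.\ and the walk is transient. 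If $m\le1$ then $\mathbf E[C_e^{-1}]=\infty$ and the flow must avoid the rare edges of atypically small conductance. I would fix a truncation profile $h(n)=n^{1/m}(\log n)^{2/m}$, so that $\sum_n\mathbf P(C_e<1/h(n))=\sum_n L(h(n))/h(n)^m<\infty$, keep only the edges with $C_e\ge1/h(|e|)$, and call $\Tcal^*$ the retained tree. This is a level-dependent percolation whose retention probabilities tend to $1$ fast enough that $\Tcal^*$ is a.s.\ infinite and — a point genuinely requiring proof — still has $br_r(\Tcal^*)=b$ on the survival event. On that event pick $\lambda\in(1/m,br_r(\Tcal^*))$ and a unit flow $\theta$ on $\Tcal^*$ with $\theta(e)\le c_\lambda|e|^{-\lambda}$. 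Conditionally on the set of surviving edges, $\theta$ is already determined while the conductances of the surviving edges of generation $n$ are i.i.d.\ with law $\mathcal L(C_e\mid C_e\ge1/h(n))$; hence, using $\mathbf E[C_e^{-1}\mathbf 1_{\{C_e\ge1/h(n)\}}]\asymp h(n)^{1-m}L(h(n))$,
\[
\mathbf E\big[\mathcal E(\theta)\,\big|\,\text{survival set}\big]\le c_\lambda\sum_{n\ge1}n^{-\lambda}\,\mathbf E\!\left[C_e^{-1}\mathbf 1_{\{C_e\ge1/h(n)\}}\right]\asymp\sum_{n\ge1}n^{\,1/m-1-\lambda}(\log n)^{O(1)}<\infty
\]
because $\lambda>1/m$. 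Thus a finite-energy unit flow exists with positive probability, and the walk is transient.

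\emph{Main obstacle.} The crux is the transience direction for $m\le1$. First, one must show that the level-dependent percolation above preserves the branching-ruin number — a first- and second-moment computation over cutsets, in the spirit of the arguments identifying $br_r$ as the critical parameter in \cite{CKS}. Second, the conditioning on the set of surviving edges has to be set up cleanly, so that given this set the surviving conductances are still independent with the explicit truncated law and independent of the flow, which is what makes the energy estimate work. On the recurrence side the analogous difficulty is combinatorial: making the barrier of small conductances operate \emph{simultaneously} along an entire branching-ruin cutset requires controlling the number of branches to be cut against the cutset property defining $b$.
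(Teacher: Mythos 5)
Your overall strategy — electrical networks, unit flows for transience, a cutset estimate for recurrence, and a level-dependent percolation to prune the bad conductances before running a flow argument — is the same circle of ideas the paper uses, which repackages them through the quantity $RT(\mathcal T,\psi_{RC})$ from \cite{CKS}. On the transience side your setup is in fact a bit cleaner than the paper's: you prune with the single, independent condition $C_e\ge 1/h(|e|)$, whereas the paper's percolation \eqref{defpercocond} also conditions on $\sum_{g\le e}C_g^{-1}$ being small and must therefore establish quasi-independence (Proposition~\ref{boundbrr}); in exchange, you pay by bounding a conditional expectation of the flow energy rather than having a deterministic pathwise bound. You have correctly identified the real gap there: proving that the percolation keeps the branching-ruin number close to $b$ on the survival event is nontrivial, and it is exactly the content of Propositions~\ref{propperc}, \ref{siham} and \ref{boundbrr} of the paper (quasi-independence, Rayleigh/flow comparison, and a two-step comparison of clusters).

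The more serious issue is your recurrence argument in the regime $b>1$, $mb<1$. Your barrier construction requires, below each cutset $\Pi_k$ at generation $\asymp N_k$, one small-conductance edge on every branch crossing the slab $[N_k,2N_k]$, and you control the resulting union bound by asserting that ``the total number [of branches] is controlled by the cutset property, of order $N_k^{b+o(1)}$''. This is not a consequence of $br_r(\mathcal T)=b$: the branching-ruin number controls infima of cutset sums $\sum_{e\in\pi}|e|^{-\gamma}$, not sphere sizes, and (as the introduction itself emphasizes) a tree with $br_r(\mathcal T)=b$ can have sphere sizes that are not $O(n^{b+o(1)})$ — branching can be concentrated in a few generations so that $|\{e:|e|=n\}|$ blows up along a subsequence while good cutsets hide at other levels. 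For such a tree the slab $[N_k,2N_k]$ may contain far more branches than $N_k^{b+o(1)}$, the union bound fails, and the barrier $\widetilde\Pi_k$ need not exist. Even your preliminary step (``all edges in $\Pi_k$ of generation $\asymp N_k$'') is not available in general for the same reason. The paper avoids all of this by never leaving the cutset $\pi_n$: it lower-bounds $\sum_{g\le e}C_g^{-1}$ for each $e\in\pi_n$ directly (using that a maximum of $|e|$ i.i.d.\ heavy-tailed terms exceeds $|e|^{1/m-\delta}$ outside probability $\exp(-|e|^{\delta m}L(\cdot))$), sums these failure probabilities against $\sum_{e\in\pi_n}|e|^{-(b+\delta)}<e^{-n}$, applies Borel--Cantelli, and concludes $RT(\mathcal T,\psi_{RC})<1$ — which is a first-moment/crossing-probability criterion over cutsets, not Nash--Williams (your $b<1$ sketch has the same mild mislabelling: the linear lower bound on path resistance feeds into the crossing-probability bound $\mathbf P(\text{hit }e^+\text{ before }\r)\le C_{e_1}^{-1}/\sum_{g\le e}C_g^{-1}$, not into a cutset conductance as in Nash--Williams). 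Your $m>1$ transience computation is correct as stated.
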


\subsection{The $M$-digging random walk}
\label{subsection:introMdigging} 

Our second main result concerns a model of multi-excited random walks on trees, also known as {\it cookie random walks}.\\
Excited random walks were introduced by Benjamini and Wilson in~\cite{BW}  on $\mathbb Z^d$, and have been extensively studied (see \cite{ABK, BR, K03, K05,  V10}). Zerner \cite{Z05, Z06} introduced a generalization of this model called multi-excited random walks (or cookie random walk). These walks are well understood on $\mathbb{Z}$, but not much is known in higher dimensions.\\
Here, we study an extreme case of multi-excited random walks on trees, introduced by Volkov \cite{Volk}, called the $M$-digging random walk ($M$-DRW). We also study its biased version and generalize a result by Basdevant and Singh \cite{BasdSingh}, see Theorem \ref{th:digging0}, who studied it on regular trees.\\

Assign to each vertex $M$ cookies, where $M$ is a non-negative integer. Define a nearest-neighbor random walk ${\bf X}$ as follows. Each time it visits a vertex, if there is any cookie left there, it eats one of them and then jumps to the parent of that vertex. If no cookies are detected, then it jumps to one of the neighbors with uniform probability.  We refer to section~\ref{Mdig} for a formal definition {of this process.} \\

Volkov \cite{Volk} conjectured that this process is transient on any tree containing the binary, which was proved by Basdevant and Singh \cite{BasdSingh}. Here, we obtain a much finer description of the process and we can prove that this random walk actually undergoes a phase transition on trees with polynomial gowth, i.e.~on trees $\Tcal$ where the branching-ruin number $br_r(\Tcal)$ is finite.

\begin{theorem}
\label{th:digging}
Let $\mathcal{T}$ be an infinite, locally-finite, rooted tree, and let $M\in\mathbb{N}$. If  $br_r(\mathcal T)<{M+1}$ then $M$-DRW is recurrent and if $br_r(\mathcal T)>{M+1}$ then $M$-DRW is transient.
\end{theorem}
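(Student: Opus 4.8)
The plan is to prove the two halves separately, reducing in each case the study of the (non-Markovian) $M$-DRW to an effective electrical-network computation on $\mathcal T$. The guiding principle is that, as far as recurrence and transience are concerned, the walk should behave like the reversible random walk with conductances $c_e\asymp|e|^{-M}$, the exponent $M$ encoding the $M$ forced cookie moves towards the root that must precede any progress past a vertex. Granting this, the statement follows from the cutset/flow description \eqref{branchingdef2} of $br_r$: for a spherically symmetric tree with $\sim k^b$ vertices at level $k$ the $k$-th shell of this network has resistance $\sim k^{M}/k^{b}$, whose partial sums converge exactly when $b>M+1$, and for a general tree one passes instead through the flow/min-cut reformulation of the branching-ruin number. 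So the real content is to justify the comparison with $c_e=|e|^{-M}$ by hand.

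For $br_r(\mathcal T)<M+1\Rightarrow$ recurrent: note first that (for $M\ge1$) the walk bounces straight back to the root the first time it leaves it, so that ``transience'' means exactly that some excursion away from the root fails to terminate, and this is what must be excluded. I would show that the probability of a non-terminating excursion is controlled by the effective conductance of the network $c_e=|e|^{-M}$ and therefore vanishes when that network is recurrent. The excursion decomposition to use: inside a subtree $T_v$ met along a deep cutset, the first $M$ excursions of the walk are the trivial cookie-bounces at $v$, so the walk can escape below $v$ only from the $(M+1)$-st excursion on; estimating each of these $M+1$ near-independent ``tries'' and inducting along the path $\rho\to v$ yields a Nash--Williams-type lower bound on the relevant resistance. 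Since $br_r(\mathcal T)<M+1$ makes that bound diverge — using the cutset form of \eqref{branchingdef2} after a standard spreading-out of cutsets — the walk returns to the root infinitely often. The threshold is $M+1$: the ``$+1$'' is the usual $br_r>1$ threshold of a reversible walk, the ``$M$'' is the price of the cookies.

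For $br_r(\mathcal T)>M+1\Rightarrow$ transient: I would fix a unit flow $\theta$ from the root to infinity of finite energy for $c_e=|e|^{-M}$, which exists here because the flow form of \eqref{branchingdef2} supplies a unit flow with $\theta(e)\lesssim|e|^{-\lambda}$ for some $\lambda>M+1$, whence, by conservation of $\theta$, $\sum_e\theta(e)^2|e|^{M}\lesssim\sum_e\theta(e)|e|^{M-\lambda}=\sum_{k\ge1}k^{M-\lambda}<\infty$. Using $\theta$, I would run a second-moment argument on the set of level-$n$ vertices that the walk ``opens'' (visits at least $M+1$ times) and then escapes below: the first moment is bounded below via $\theta$, the pairwise correlations are handled by the usual common-ancestor decomposition on the tree, and one concludes that, with probability bounded below in $n$, the walk reaches arbitrarily deep cutsets and never returns to the root afterwards — that is, the walk is transient. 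This follows the scheme used for the once-reinforced random walk in \cite{CKS} and generalises the explicit regular-tree computation of Basdevant and Singh behind Theorem \ref{th:digging0}.

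The main obstacle, and where most of the work lies, is the lack of the Markov property: the cookie environment is consumed as the walk moves, so the successive excursions inside a subtree are neither independent nor identically distributed, and the comparison with the network $c_e=|e|^{-M}$ cannot be quoted from electrical-network theory but must be built from scratch. For recurrence this means controlling, uniformly in the random history-dependent state of $T_v$ and in $|v|$, the probability that the walk ever commits to $T_v$; for transience — the harder direction — it means producing a genuine lower bound on the escape probability even though the walk is repeatedly pushed back towards the root, which calls for a multi-scale or renormalisation scheme (or a coupling with a percolation, as in \cite{CKS}) rather than a monotonicity principle.
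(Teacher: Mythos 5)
Your outline is at the right level of abstraction and, in spirit, matches the paper's strategy: a first-moment bound over cutsets for recurrence, and a second-moment (equivalently, quasi-independent percolation) argument for transience, with the threshold $M+1$ coming from the escape probabilities along a single ray. You also correctly identify the central obstacle, namely the non-Markovian, history-dependent dynamics. However, you leave precisely that obstacle unresolved, and there the proposal has a genuine gap rather than just missing detail. Both halves of the paper's proof run through the Rubin-type clock construction of Section~\ref{rubin}: attaching independent exponential clocks to oriented edges, with the \emph{first} clock on each return edge $(\mu,\parent\mu)$ replaced by a Gamma$(m_\mu+1,1)$ variable (Remark~\ref{remarkgamma}), which absorbs the $M$ forced cookie moves. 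This coupling is what (a) lets one identify $\{T(\r)=\infty\}$ with $\{|\Ccal_{CP}(\r)|=\infty\}$ for a correlated percolation whose marginals are \emph{exactly} $\Psi_{M,1}(e)=\prod_{g\le e}\bigl(\tfrac{|g|-1}{|g|}\bigr)^{M+1}=|e|^{-(M+1)}$ (Lemma~\ref{lemma1}), so the recurrence half is a one-line first-moment cutset bound, and (b) furnishes the Gamma/exponential decomposition used in Lemma~\ref{lemma2} to prove quasi-independence with constant $\e^{2(M+1)}$ via a telescoping-sum estimate, which is what your second-moment step silently requires. Without producing this coupling or a substitute giving quantitative control on the correlation between the escape events at two vertices across their common ancestor, the transience half does not close; flagging it as ``the main obstacle'' and pointing at \cite{CKS} does not supply the lemma.

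Two smaller points. First, the auxiliary network $c_e\asymp|e|^{-M}$ is a useful heuristic but is not what the paper uses and is not exact: the $M$-DRW escape probability along a ray is $((n-1)/n)^{M+1}$ at step $n$, whereas the conductance network gives $\sum_{k<n}k^M/\sum_{k\le n}k^M$; these only agree asymptotically, and since the walk is not reversible there is no monotone comparison to that network one can quote. The paper avoids the network entirely by defining $\psi_{M,\lambda}$ directly from the walk's one-ray escape probabilities and feeding them into the general $RT(\Tcal,\psi)$ criterion. Second, the ``$M+1$ near-independent tries'' picture for the recurrence half is not how the exponent arises: the exponent $M+1$ comes from the single Gamma$(M+1,1)$ clock on the return edge (the $M$ forced bounces plus the one free one), not from repeated excursion attempts, and the recurrence half in fact needs no induction and no independence at all --- it is a pure first-moment estimate once $\Psi(e)=|e|^{-(M+1)}$ is established.
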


We refer to Theorem~\ref{th:digging0} for the more general result on the biased case and Theorem~\ref{maintheorem} for the case where the number of cookies on each vertex is inhomogeneous over the tree. 

\section{The models}

In this section, we define  relevant vocabulary and conventions. We then recall the definition of the {\it branching number} and {\it branching-ruin number} of a tree, and finally we formally define the models.

\subsection{Notation}
 Let  $\mathcal{T}=(V,E)$ be an infinite, locally finite, rooted tree with set of vertices $V$ and set of edges $E$. Let $\r$ be the root of $\mathcal{T}$.\\
 Two vertices $\nu,\mu\in V$ are called {\it neighbors}, denoted $\nu\sim\mu$, if $\{\nu,\mu\}\in E$.\\
For any vertex $\nu \in V\setminus\{{\r}\}$, denote  by $ {\nu}^{-1}$ its parent, i.e.  the neighbour of $\nu$ with shortest distance from $\r$.\\
For any $\nu \in V$, let  $|\nu|$ be  the number of edges in the unique self-avoiding path connecting $\nu$ to~$\r$ and call $|\nu|$ the {\it generation} of $\nu$. In particular, we have  $|\r|=0$.\\
For any edge $e \in E$
denote by  $e^-$ and $e^+$ its endpoints with $|e^+|=|e^-|+1$, and define the generation of an edge as $|e|=|e^+|$.\\
For any pair of  vertices $\nu$ and $\mu$, we write $\nu \le \mu$ if $\nu$ is on the unique self-avoiding path between ${\r}$ and $\mu$ (including it), and $\nu< \mu$ if moreover $\nu \neq \mu$. Similarly, {for two edges $e$ and $g$, we write $g\le e$ if $g^+ \le e^+ $ and $g<e$ if moreover $g^+\neq e^+$}. {For two vertices $\nu<\mu\in V$, we will denote by $[\nu,\mu]$  the unique self-avoiding path connecting $\nu$ to $\mu$.} For two neighboring vertices $\nu$ and $\mu$, we use the slight abuse of notation $[\nu,\mu]$ to denote the edge with endpoints $\nu$ and $\mu$ (note that we allow $\mu<\nu$).\\
 For two edges $e_1,e_2\in E$, we denote $e_1\wedge e_2$ the vertex with maximal distance from $\r$ such that $e_1\wedge e_2\le e_1^+$ and $e_1\wedge e_2\le e_2^+$.

\subsection{The Branching Number and The Branching-Ruin Number}
In order to define the branching number and the branching-ruin number of a tree, we will need the notion of {\it cutsets}.\\
Let $\mathcal T$ be an infinite, locally finite and rooted tree. A cutset in $\mathcal{T}$  is a set $\pi$ of edges such that, for any infinite self-avoiding path $(\nu_i)_{i\ge0}$ started at the root,  there exists a unique $i\ge 1$ such that $[\nu_{i-1},\nu_i]\in \pi$. In other words, a cutset is a minimal set of edges separating the root from infinity. We use $\Pi$ to denote the set of cutsets.\\

The branching number of $\mathcal{T}$ is defined as
\begin{equation}\label{branchingdef}
br(\mathcal T)   \Def  \sup \left\{  \gamma  >0 :  \inf_{\pi\in\Pi} \ \sum_{e\in
      \pi}\gamma^{-\left  |  e \right  |}>0  \right  \}\in[1,\infty].
\end{equation}
 branching-ruin number of $\mathcal{T}$ is defined as
\begin{equation}\label{branchingdef2}
br_r(\mathcal T)  \Def  \sup \left\{  \gamma  >0 :  \inf_{\pi\in\Pi} \ \sum_{e\in
      \pi}|e|^{-\gamma}>0  \right  \}\in[0,\infty].
\end{equation}

These quantities provide  good ways to measure respectively the exponential growth and the polynomial growth of a tree. For instance, a tree which is spherically symmetric (or regular) and whose $n$ generation grows like $b^n$, for $b\ge1$, has a branching number  equal to $b$. On the other hand, if such a tree grows like $n^b$, for some $b\ge0$, its branching-ruin number is equal to $b$. We refer the reader to \cite{LP} for a detailed investigation of the branching number and \cite{CKS} for discussions on the branching-ruin number.

\subsection{Formal definition of the models}
\label{definitionofmodel}

\subsubsection{The random walk on heavy-tailed random conductances}\label{RWRCs}
In this section, we provide a formal definition of the random walk on random conductances (\Go).\\
First let us define the environment of the walk. To the edges of $\Tcal$, we associate i.i.d.~random conductances $C_e\in(0,\infty)$, $e\in E$, with common law ${\bf P}$, where ${\bf E}$ denotes the corresponding expectation. We will assume that
\begin{equation}\label{distr-cond}
 {\bf P}\left(C_e \le \frac 1t\right) = \frac{L(t)}{t^m},  \qquad \mbox{for $t >0$},
\end{equation}
where $L:\mathbb{R}\to\mathbb{R}$ is a slowly varying function.

Given a realisation of the  environment $(C_e)_{e\in E}$, we define a reversible Markov chain ${\bf X}=(X_n)_n$. We denote $P^\omega_\nu$ the law of this Markov chain when it is started from a vertex $\nu\in V$. Under $P^\omega_\r$, we have that $X_0=\r$ and, if $X_n=\nu$ and $\mu\sim \nu$, we have that
\[
P^\omega_\r\left( \left. X_{n+1}=\mu \right|X_n=\nu \right)=P^\omega_\nu\left(X_{1}=\mu  \right)=\frac{C_{[\nu,\mu]}}{\sum_{\mu'\sim \nu}C_{[\nu,\mu']}  }. 
\]

We call $P^\omega_\cdot$ the {\it quenched law} of the random walk and denote $E^\omega_\cdot$ the corresponding expectation. We define the {\it annealed law} of ${\bf X}$ started at $\r$ as the semi-direct product $\P_\r={\bf P}\times P^\omega_\r$, that is the random walk averaged over the environment. We denote $\E_\r$ the corresponding annealed expectation.\\
For a vertex $v\in V$, $T(v)$ stands for the {\it return time} to $v$, that is
\[
T(v) \Def \inf\{n>0:X_n=v\}.
\]
A \Go~is said to be {\it recurrent} if  it  returns to ${\r}$,   $\mathbb{P}_\r$-almost surely. This process is {\it transient} if it is not recurrent, that is
\[
\P_\r\Big(T({\r}) =\infty\Big) >0.
\]
As $\P_\r\Big(T({\r}) =\infty\Big)={\bf E}\left(P^\omega_\r\Big(T({\r}) =\infty\Big)\right)$, ${\bf X}$ is transient if, with positive ${\bf P}$-probability, we have that
\[
P^\omega_\r\Big(T({\r}) =\infty\Big)>0.
\]
Finally, as ${\bf X}$ is a Markov chain under $P^\omega_\cdot$, we have that it is transient if and only if the walk {returns finitely often } to the root $\r$ and, using a zero-one law on the environment, we can prove that this happens with probability $0$ or $1$. Therefore, the notions of recurrence and transience are well defined in the quenched and annealed sense.

\subsubsection{The $M$-digging random walk}\label{Mdig}
Let $\mathcal{T}=(V,E)$ be an infinite, locally-finite, tree rooted at a vertex $\r$. We are going to define a {biased version of the $M$-DRW described above, which will also allow for an inhomogeneous initial number of cookies}.\\
Let $\overline{M}=(m_\nu,\nu\in V)$ be a collection of non-negative integers, with $m_\r=0$, and fix $\lambda>0$. For convenience, for $e\in E$, we denote $m_e=m_{e^+}$.\\
Let us define a random walk ${\bf X}=(X_n)_{n\geq 0}$ as follows. 
For any vertex $\nu\in V$, define
\begin{equation}
\ell_n(\nu)=\left|\left\{k\in\{0,\dots,n\}:\  X_{k}=\nu \right\}\right| .
\end{equation}
For each edge $e\in E$ and each time $n\in\mathbb{N}$, we associate the following weight:
\begin{equation}\label{weight}
W_n(e) \Def \left(1-\1_{\{\ell_n(e^-)\le m_{e^-}\}}\right)\lambda^{-|e|+1}.
\end{equation}
{As can be seen in \eqref{showandtell} below, the model remains unchanged if, in the above definition, we use $\lambda^{-|e|}$ instead of $\lambda^{-|e|+1}$. Our choice turns out to be  convenient in the proofs}.\\
For a non-oriented edge ${[\nu,\mu]}$, we will simply write $W_n(\nu,\mu)=W_n(\mu,\nu)=W_n({[\nu,\mu]})$  
We start the random walk at $X_0=\r$. At time $n\ge0$, for any $\nu\in V$, on the event $\{X_n=\nu\}$, we define, for any $\mu\sim \nu$,
\begin{equation}\label{showandtell}
\mathbb{P}\left(\left.X_{n+1}=\mu\right|\mathcal{F}_n\right)=\frac{W_n(\nu,\mu)}{\sum_{\mu'\sim\nu}W_n(\nu,\mu')},
\end{equation}
where $\mathcal{F}_n=\sigma(X_0,\dots,X_n)$ is the $\sigma$-field generated by the history of ${\bf X}$ up to time $n$. We call this walk an $M$-digging random walk with bias $\lambda$ and denote it $M$-DRW$_\lambda$.\\
It will be very convenient to observe ${\bf X}$ only at times when it is on vertices with no more cookies. For this purpose, let us define  $\widetilde{\bf X}=(\widetilde{X}_n)_n$ a nearest-neighbor random walk on $\mathcal{T}$ as follows.
Let $\sigma_0=0$ and, for any $n\in\mathbb{N}$,
\begin{equation}
\sigma_{n+1}=\inf\left\{k>\sigma_n: X_k\neq X_{\sigma_n}, \ell_k(X_k)\ge m_{X_k}+1\right\}.
\end{equation}
We define, for all $n\in\mathbb{N}$, $\widetilde{X}_n=X_{\sigma_n}$.\\
Next, we want to define notions of recurrence and transience for ${\bf X}$. As above, we define  the {\it return time} of ${\bf X}$, or $\widetilde{\bf X}$, to a vertex $\nu\in V$ by
\begin{equation}\label{defTnu}
T(\nu)\Def \inf\{k\ge1: \widetilde{X}_k=\nu\}.
\end{equation}
In words, we consider that a vertex $\nu$ is {\it hit} by ${\bf X}$ when it is hit by $\widetilde{\bf X}$ in the usual sense. The fact to choose this time to be greater than 1 will be convenient technically to accommodate with the particularities of the root.\\
We say that ${\bf X}$, or $\widetilde{\bf X}$, is {\it transient} if
\begin{equation}
\mathbb{P}\left(T(\r)=\infty\right)>0.
\end{equation}
Otherwise, we say that ${\bf X}$, or $\widetilde{\bf X}$, is {\it recurrent}.\\

Note that if we choose $m_\nu=M\in\mathbb{N}$ for all $\nu\in V\setminus\{\r\}$ and $\lambda=1$, then ${\bf X}$ is the $M$-DRW described in Section \ref{subsection:introMdigging}.

\section{Main results}

We are about to state a sharp criterion of recurrence/transience in terms of a quantity $RT(\mathcal{T},{\bf X})$, first introduced in \cite{CKS}.\\
For a function $\psi:E\to \mathbb{R}^+$, we define the quantity
\begin{equation}
\label{equ:RT}
RT(\mathcal{T},\psi)\Def \sup\left\{\gamma > 0: \inf_{\pi \in \Pi}\sum_{e\in \pi}\left(\prod_{g\le e}\psi(g)\right)^\gamma>0 \right\}.
\end{equation}

As we will see, for the relevant function $\psi$, the recurrence or transience of the walks will be related to this quantity being smaller or greater than $1$.

\subsection{Main results about \Go}\label{resultsrwrc}
It is straightforward to see that the two following results together imply  Theorem \ref{mainth}. {The proof of Proposition~\ref{mainth0}  is given in Section~\ref{sect-main}.}

Let us define, for any $e\in E$, $\psi_{{RC}}(e)=1$ if $|e|=1$ and, if $|e|>1$, 
\begin{equation}
\label{equ:definitionofpsiRC} 
\psi_{RC}(e)=\frac{\sum_{g<e}C_g^{-1}}{\sum_{g\le e}C_g^{-1}}.
\end{equation}
\begin{proposition}\label{mainth0}
Fix an infinite, locally finite, tree $\mathcal{T}$ and let $b =br_r(\mathcal{T})\in[0,\infty]$ be its branching-ruin number. {If $b< 1$  then $RT(\mathcal{T},\psi_{{RC}})<1$, ${\bf P}$-almost surely. Assuming $b >1$,}  we have that
\begin{enumerate}
 \item if $mb>1$ then $RT(\mathcal{T},\psi_{RC})>1$ with positive ${\bf P}$-probability;
 \item  if $mb<1$ then  $RT(\mathcal{T},\psi_{RC})<1$, ${\bf P}$-almost surely.
 \end{enumerate}
\end{proposition}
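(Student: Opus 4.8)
The key is to relate the quantity $RT(\mathcal{T},\psi_{RC})$ to the branching-ruin number $br_r(\mathcal{T})$ by understanding the typical size of the products $\prod_{g\le e}\psi_{RC}(g)$ along a ray. Note that $\psi_{RC}(e)\le 1$ always, so the product is telescoping-like: writing $S_e=\sum_{g\le e}C_g^{-1}$ (with $S_e=\infty$ formally replaced by the partial sums), we have $\prod_{g\le e,\,|g|>1}\psi_{RC}(g)=\frac{C_{e_1}^{-1}}{S_e}$ where $e_1$ is the first edge on the path to $e$ (this uses the convention $\psi_{RC}(e)=1$ when $|e|=1$). Thus $\prod_{g\le e}\psi_{RC}(g)=C_{e_1}^{-1}\big/\sum_{g\le e}C_g^{-1}$, and since along any ray the conductances are i.i.d.\ with a heavy tail at $0$ governed by \eqref{distr-cond}, the sum $\sum_{g\le e}C_g^{-1}$ of $|e|$ i.i.d.\ variables in the domain of attraction of a stable law of index $m$ behaves, up to slowly varying corrections, like $|e|^{1/m}$ when $m<1$ (dominated by the largest term) and like $|e|$ when $m>1$ (law of large numbers, if $\mathbf{E}[C_e^{-1}]<\infty$; otherwise again $|e|^{1/m}$ up to slowly varying factors).

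**Step 1: the case $b<1$.** Here I would argue directly that $RT(\mathcal{T},\psi_{RC})<1$ almost surely regardless of $m$. Since $br_r(\mathcal{T})=b<1$, for any $\gamma\in(b,1)$ we have $\inf_{\pi\in\Pi}\sum_{e\in\pi}|e|^{-\gamma}=0$. Because $\psi_{RC}(g)\le 1$, one has $\big(\prod_{g\le e}\psi_{RC}(g)\big)^{\gamma'}\le 1$, but more usefully we need a lower bound of the form $\prod_{g\le e}\psi_{RC}(g)\ge c(\omega)|e|^{-1/m-\delta}$ uniformly along rays, which fails in general; instead I would bound $\big(\prod_{g\le e}\psi_{RC}(g)\big)^{\gamma}\le C_{e_1}^{-\gamma}\big(\sum_{g\le e}C_g^{-1}\big)^{-\gamma}$ and, since a single term $C_g^{-1}$ already shows $\sum_{g\le e}C_g^{-1}\ge \max_{g\le e}C_g^{-1}$, deduce a deterministic comparison with $|e|^{-\gamma}$-type cutset sums after a Borel–Cantelli argument controlling the minimum of $C_g^{-1}$ along finitely many generations. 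The cleanest route: show $RT(\mathcal{T},\psi_{RC})\le RT(\mathcal{T},\mathbf{1})$ is not quite enough, so I would instead prove $RT(\mathcal{T},\psi_{RC})\le \max(1,m)\cdot br_r(\mathcal{T})$ type inequalities — but for $b<1$ the relevant claim is just $RT<1$, and comparing the product to $|e|^{-1/\min(m,1)}$ up to slowly varying factors (valid a.s.\ along rays, extended to a cutset statement via a union bound over the at-most-exponentially-many vertices per generation — which is fine because we only need $\gamma$ strictly inside the relevant interval) gives $RT(\mathcal{T},\psi_{RC})\le \min(m,1)\cdot b<1$.

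**Step 2: the case $b>1$, via $\psi_{RC}(g)\approx |g|^{-?}$.** For (1), assume $mb>1$. Pick $\gamma$ with $1<\gamma<mb$ and $\gamma<\frac{mb}{1}$; I want to show $\inf_{\pi}\sum_{e\in\pi}\big(\prod_{g\le e}\psi_{RC}(g)\big)^{\gamma}>0$ with positive probability. Using $\prod_{g\le e}\psi_{RC}(g)=C_{e_1}^{-1}/\sum_{g\le e}C_g^{-1}$ and the heavy-tail estimate, $\sum_{g\le e}C_g^{-1}\le L_1(|e|)|e|^{1/m}$ with high probability for a slowly varying $L_1$, so $\prod_{g\le e}\psi_{RC}(g)\ge c(\omega)|e|^{-1/m}/L_1(|e|)$ on a good event. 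Then $\big(\prod_{g\le e}\psi_{RC}(g)\big)^{\gamma}\gtrsim |e|^{-\gamma/m}L_1(|e|)^{-\gamma}$, and since $\gamma/m<b=br_r(\mathcal{T})$, by definition of $br_r$ we get $\inf_\pi\sum_e |e|^{-\gamma/m}>0$, and the slowly varying factor is absorbed by choosing $\gamma/m$ strictly below $b$. The delicate point is making "with high probability / on a good event" into a statement that holds simultaneously along all rays with positive probability — this is exactly where the argument from \cite{CKS} for $RT$ is used, typically via a second-moment / percolation-type argument on a thinned subtree, or by a direct first-moment bound showing $\mathbf{E}\big[\sum_{e\in\pi_n}(\prod\psi_{RC})^\gamma\big]$ stays bounded below along a sequence of cutsets $\pi_n$ and then invoking a variance bound. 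For (2), assume $mb<1$; take $\gamma$ with $mb<\gamma<1$... wait, we need $RT<1$, so we want to show $\inf_\pi\sum_e(\prod\psi_{RC})^\gamma=0$ for all $\gamma>$ something $<1$; here I use the \emph{lower} bound $\sum_{g\le e}C_g^{-1}\ge L_2(|e|)|e|^{1/m}$ is false — rather the sum is at \emph{most} of order $|e|^{1/m}$ typically but can be smaller; the correct direction is that $\prod_{g\le e}\psi_{RC}(g)\le C_{e_1}^{-1}/(\max_{g\le e}C_g^{-1})$, and controlling the running maximum of i.i.d.\ heavy-tailed variables gives $\prod_{g\le e}\psi_{RC}(g)\le L_3(|e|)|e|^{-1/m}$ along rays with overwhelming probability, so $(\prod\psi_{RC})^\gamma\le L_3(|e|)^\gamma|e|^{-\gamma/m}$; since $\gamma/m>b$ for $\gamma$ close enough to... no: $mb<1$ means $b<1/m$, so I want $\gamma/m>b$ i.e.\ $\gamma>mb$, which is compatible with $\gamma<1$; then $\inf_\pi\sum_e|e|^{-\gamma/m}=0$ by definition of $br_r$, and a Borel–Cantelli argument upgrades the "with overwhelming probability along each ray" to an almost-sure statement over a well-chosen sequence of cutsets.

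**Main obstacle.** The genuine difficulty is item (1): passing from a pointwise (along each fixed ray) heavy-tail estimate to a simultaneous lower bound on $\inf_{\pi}\sum_{e\in\pi}(\prod_{g\le e}\psi_{RC})^\gamma$ holding with positive probability. A naive union bound over all rays fails since there are uncountably many. The standard remedy — and what I expect the authors do — is a quantitative second-moment argument: show that for a deterministic sequence of cutsets $(\pi_n)$ realizing the branching-ruin number (so that $\sum_{e\in\pi_n}|e|^{-\gamma/m}\ge\delta>0$), the random variables $Z_n=\sum_{e\in\pi_n}(\prod_{g\le e}\psi_{RC}(g))^\gamma$ have $\mathbf{E}[Z_n]$ bounded below and $\mathbf{E}[Z_n^2]\le C\,\mathbf{E}[Z_n]^2$, using that the dependence between the products for two edges $e,e'$ only comes through the shared initial segment up to $e\wedge e'$; then Paley–Zygmund gives $\mathbf{P}(Z_n\ge \mathbf{E}[Z_n]/2)\ge c>0$, and a compactness/limiting argument (the $\pi_n$ can be taken nested or one passes to $\liminf$) yields $RT(\mathcal{T},\psi_{RC})\ge \gamma/m$...

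wait, I need to be careful about whether $RT \geq \gamma$ or $RT$ involves $\gamma/m$. Let me restate cleanly what I'd prove, since the exponent bookkeeping is the crux: I claim $RT(\mathcal{T},\psi_{RC}) = m\cdot br_r(\mathcal{T})$ when the product $\prod_{g \le e}\psi_{RC}(g)$ is comparable to $|e|^{-1/m}$ (up to slowly varying factors) along rays. Indeed, $RT$ is the sup of $\gamma$ such that $\inf_\pi \sum_e (|e|^{-1/m})^\gamma = \inf_\pi\sum_e |e|^{-\gamma/m} > 0$, and this holds iff $\gamma/m < br_r(\mathcal{T})$, i.e.\ iff $\gamma < m\cdot br_r(\mathcal{T}) = mb$. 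So $RT(\mathcal{T},\psi_{RC}) = mb$, which is $>1$ iff $mb>1$ and $<1$ iff $mb<1$, matching the claims; and the slowly varying factors from $L$ are harmless because strict inequalities in $\gamma$ give room. So the proof reduces to: (i) the identity $\prod_{g\le e}\psi_{RC}(g) = C_{e_1}^{-1}/\sum_{g\le e}C_g^{-1}$; (ii) the heavy-tail estimate that this sum is $|e|^{1/m}$ up to slowly varying factors, with good concentration (large-deviation upper and lower bounds for sums/maxima of i.i.d.\ variables in the domain of attraction of a stable law of index $m$, treating $m<1$, $m=1$, $m>1$ separately — the case $m>1$ with $\mathbf{E}[C_e^{-1}]=\infty$ still gives $|e|^{1/m}$ scaling but one should double-check the statement only claims things for $mb \gtrless 1$, so $m>1$ forces $b$ near $1$ and both sub-cases are covered); (iii) the second-moment argument above to get the positive-probability lower bound in case (1), and a Borel–Cantelli argument for the almost-sure upper bounds in the $b<1$ case and case (2). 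The technical heart is (iii) combined with the uniformity issue, and secondarily the slowly-varying bookkeeping in (ii).
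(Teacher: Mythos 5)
Your telescoping identity $\prod_{g\le e}\psi_{RC}(g)=C_{e_1}^{-1}\big/\sum_{g\le e}C_g^{-1}$ is exactly the paper's starting point (their \eqref{defBR}, where the harmless $C_{e_1}^{-1}$ factor is dropped), and your heuristic that $RT(\mathcal{T},\psi_{RC})$ should equal $b$ divided by the growth exponent $1\vee\frac1m$ of $\sum_{g\le e}C_g^{-1}$ is the right one. Your treatment of the two recurrence directions ($b<1$, and $mb<1$ with $b\ge1$) is essentially the paper's: a first Borel--Cantelli argument along a sequence of cutsets $(\pi_n)$ chosen so that $\sum_{e\in\pi_n}|e|^{-b-\delta}<e^{-n}$, with per-edge lower-tail bounds on $\sum_{g\le e}C_g^{-1}$ (of stretched-exponential type when $mb<1$, and via a block decomposition when $b<1$). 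One caution there: the union bound must be taken over the edges of these tuned cutsets, weighted against $|e|^{-b-\delta}$, not ``over the at-most-exponentially-many vertices per generation'' as you suggest at one point --- the latter need not be summable.

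The genuine gap is in item (1). To conclude $RT(\mathcal{T},\psi_{RC})>1$ you must show $\inf_{\pi\in\Pi}\sum_{e\in\pi}\bigl(\sum_{g\le e}C_g^{-1}\bigr)^{-\gamma}>0$, where the infimum runs over \emph{all} cutsets. A Paley--Zygmund bound $\mathbf{P}(Z_n\ge \mathbf{E}[Z_n]/2)\ge c$ for a fixed sequence of cutsets $\pi_n$ --- even upgraded to ``infinitely often'' --- controls only that countable family; cutsets can dip to wildly different depths in different parts of the tree, so no sequence exhausts them, and for the \emph{lower} bound on the infimum (unlike the upper bound) one cannot reduce to a sequence. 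Your ``compactness/limiting argument'' does not exist in the form you invoke it. The paper's solution is structurally different: define a correlated percolation in which $e$ is open iff $C_e^{-1}\le|e|^{(1+\eps)/m}$ and $\sum_{g\le e}C_g^{-1}\le|e|^{(1\vee\frac1m)+\frac{m+3}{m}\eps}$, prove it is \emph{quasi-independent} (Proposition \ref{boundbrr}) so that the second-moment machinery (via effective conductance and flows, Proposition \ref{propperc}) applies, and --- crucially, a step with no analogue in your proposal --- prove that the surviving cluster $\mathcal{T}_C$ not only is infinite but retains branching-ruin number at least $b-2\eps$ (Proposition \ref{siham}(2), itself proved by running a second independent percolation on top of the first). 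On that event every cutset of $\mathcal{T}$ induces a cutset of $\mathcal{T}_C$, along which the conductance sums are \emph{deterministically} bounded by construction, and the positivity of the infimum over all cutsets follows from $br_r(\mathcal{T}_C)>b-2\eps$. Without a device of this kind your transience argument does not close. A secondary point: your clean formula $RT=mb$ is only correct for $m\le1$; for $m\ge1$ the sum grows like $|e|$ and the correct heuristic is $RT=\min(b,mb)$, which is why the paper splits item (1) into the cases $m<1$ and $m\ge1$ with the two conditions \eqref{epsm} and \eqref{epsm2}.
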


{The following result is  a direct consequence of Theorem 5 of \cite{CKS},  recalling the discussion at the end of Section~~\ref{RWRCs} and noting} that condition (2.5) in \cite{CKS} is trivially satisfied by Markov chains, which in that context is translated into  non-reinforced environments. Therefore, we will omit its proof.

\begin{proposition}[Theorem 5 of \cite{CKS}]
\label{prop:likeCKS}
Fix an infinite, locally finite, tree $\mathcal{T}$.  We have that
\begin{enumerate}
 \item if $RT(\mathcal{T},\psi_{RC})>1$ with positive ${\bf P}$-probability then {\Go } is transient;
 \item  if $RT(\mathcal{T},\psi_{RC})<1$ ${\bf P}$-almost surely then {\Go } is recurrent.
 \end{enumerate}
\end{proposition}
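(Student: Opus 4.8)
The plan is to \textbf{derive Proposition~\ref{prop:likeCKS} as an immediate corollary of Theorem 5 of \cite{CKS}}, checking only that the hypotheses of that theorem apply to the \Go$\,$ in our setting. Theorem 5 of \cite{CKS} gives a recurrence/transience dichotomy for a broad class of nearest-neighbor processes on trees built from edge weights, phrased precisely in terms of the quantity $RT(\mathcal{T},\psi)$ defined in \eqref{equ:RT}, where $\psi$ is the relevant ``effective weight'' function attached to the model. The content of the proposition is therefore twofold: (i) identify which $\psi$ the \Go$\,$ corresponds to, and (ii) verify that the general hypotheses of \cite[Theorem 5]{CKS}---in particular the condition labelled (2.5) there, which in the reinforced setting of \cite{CKS} encodes a quantitative control on how the environment depends on the past---hold here.

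First I would recall from the discussion at the end of Section~\ref{RWRCs} that the \Go, conditionally on the environment $(C_e)_{e\in E}$, is a reversible Markov chain, so in the language of \cite{CKS} its ``environment'' is non-reinforced: the transition weights do not depend on the trajectory. Consequently the condition (2.5) of \cite{CKS}, which in that paper constrains the dependence of the weights on the walk's local time, is trivially satisfied---it reduces to a statement that holds automatically for a fixed (quenched) environment. This is exactly the sentence preceding the proposition in the excerpt, and it is the only nontrivial point to make; no genuine computation is required.

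Second I would match the effective-weight function. For a reversible nearest-neighbor walk on $\mathcal{T}$ with conductances $(C_e)$, the probability that the walk started just below an edge $e$ ever crosses $e$ outward before returning is, by the standard electrical-network computation (series law along the ray from $\r$ to $e^+$), governed by the ratios appearing in \eqref{equ:definitionofpsiRC}: indeed $\prod_{g\le e}\psi_{RC}(g)$ telescopes to $\big(\sum_{g\le e_1}C_g^{-1}\big)\big/\big(\sum_{g\le e}C_g^{-1}\big)$ (with the generation-$1$ normalisation built into the convention $\psi_{RC}(e)=1$ for $|e|=1$), which is precisely the quantity controlling escape probabilities in Nash-Williams/Thomson-type bounds on the tree. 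This is the function for which \cite[Theorem 5]{CKS} applies, so by that theorem: $RT(\mathcal{T},\psi_{RC})>1$ (with positive ${\bf P}$-probability, i.e.\ for a positive-probability set of environments) forces $P^\omega_\r(T(\r)=\infty)>0$ on that set, hence \Go$\,$ is transient; and $RT(\mathcal{T},\psi_{RC})<1$ ${\bf P}$-a.s.\ forces $P^\omega_\r(T(\r)=\infty)=0$ ${\bf P}$-a.s., hence \Go$\,$ is recurrent. Invoking the zero-one law on the environment from Section~\ref{RWRCs} to upgrade the quenched statements to the annealed notions of recurrence/transience completes both items.

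The main (and essentially only) obstacle is a bookkeeping one: confirming that the normalisations and conventions of \cite{CKS}---their indexing of generations, their treatment of the root, and the exact form in which $\psi$ enters their $RT$---line up with the function $\psi_{RC}$ as defined in \eqref{equ:definitionofpsiRC}, so that ``condition (2.5)'' really does translate to ``non-reinforced environment'' and nothing is lost in the translation. Since this is a routine check against an existing theorem and the paper explicitly states it will omit the proof, the proposal is simply to cite \cite[Theorem 5]{CKS} after making the above two observations.
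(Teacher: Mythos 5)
Your proposal matches the paper's own treatment exactly: the paper also cites \cite[Theorem~5]{CKS}, notes that condition~(2.5) there is trivially satisfied because the quenched \Go\ is a Markov chain (a non-reinforced environment), and then omits the proof, relying on the discussion at the end of Section~\ref{RWRCs} for the zero-one law upgrade. Your extra remarks matching $\psi_{RC}$ to the electrical-network escape probabilities are a correct and helpful elaboration, but not a departure from the paper's route.
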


\subsection{Main results about the $M$-DRW$_\lambda$}\label{resultsdrw}

The following Theorem is more general than Theorem \ref{th:digging} in the introduction and deals with the homogeneous case where $\overline{M}=(m_\nu; \nu\in V)$ is such that $m_\r=0$ and $m_\nu=M$ for all $\nu\in V\setminus \{\r\}$. Let us emphasize that, in item $(1)$ {below},  the phase transition is given in terms of branching-ruin number whereas, in item $(2)$, the phase transition is given in terms of branching number.

\begin{theorem}
\label{th:digging0}
Let $\mathcal{T}$ be an infinite, locally-finite, rooted tree, and let $M\in\mathbb{N}$, $\lambda>0$. Denote ${\bf X}$ the  $M$-DRW$_\lambda$ on $\mathcal{T}$ with parameters $\lambda>0$ and  $\overline{M}=(m_\nu; \nu\in V)$ such that $m_\r=0$ and $m_\nu=M$ for all $\nu\in V\setminus \{\r\}$. We have that
\begin{enumerate}
\item in the case $\lambda=1$,   if  $br_r(\mathcal T)<{M+1}$ then ${\bf X}$ is recurrent and if $br_r(\mathcal T)>{M+1}$ then ${\bf X}$ is transient;
\item for any $\lambda>1$, if  $br(\mathcal T)<\lambda^{M+1}$ then ${\bf X}$ is recurrent and if $br(\mathcal T)>\lambda^{M+1}$ then ${\bf X}$ is transient;
\item for any $\lambda<1$,  ${\bf X}$ is transient.
\end{enumerate}
\end{theorem}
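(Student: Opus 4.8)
The plan is to obtain all three items from a single recurrence/transience criterion phrased through the quantity $RT(\mathcal{T},\cdot)$ of~\eqref{equ:RT}, in the same way that Theorem~\ref{mainth} was obtained from Propositions~\ref{mainth0} and~\ref{prop:likeCKS}. First I would prove (this is the homogeneous case of the more general Theorem~\ref{maintheorem}, so if one establishes that theorem first the present step is merely a specialization) that the $M$-DRW$_\lambda$ of the statement is transient when $RT(\mathcal{T},\psi_\lambda)>1$ and recurrent when $RT(\mathcal{T},\psi_\lambda)<1$, where $\psi_\lambda(e)\Def 1$ if $|e|=1$ and
\[
\psi_\lambda(e)\Def\Bl\frac{\sum_{g<e}\lambda^{|g|}}{\sum_{g\le e}\lambda^{|g|}}\Br^{M+1}\qquad\text{if }|e|\ge 2 .
\]
Write $\phi_\lambda$ for the base of this power, so $\psi_\lambda=\phi_\lambda^{M+1}$.

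The reason for this choice of $\psi_\lambda$: once all cookies of a vertex $\nu$ with $|\nu|\ge1$ have been eaten, on $\{X_n=\nu\}$ the walk ${\bf X}$ jumps to its neighbours with probabilities proportional to the conductances $\lambda^{-|g|}$, i.e.\ like the $\lambda$-biased reversible chain on $\mathcal{T}$, and $\phi_\lambda(e)$ is exactly the probability that this chain, started at $e^-$, reaches $e^+$ before returning to $\r$ — a path quantity, equal to $R(\r\to e^-)/R(\r\to e^+)$ for the series resistances along $[\r,e^+]$ computed with $r(g)=\lambda^{|g|}$. The exponent $M+1$ records that, to ``clear'' an edge $g$ on the way to infinity, the embedded walk $\widetilde{\bf X}$ must cross $g$ forward once (a factor $\phi_\lambda(g)$) and then survive the $m_{g^+}=M$ forced returns across $g$ that the cookies at $g^+$ impose (a further factor $\phi_\lambda(g)^M$); hence the probability of escaping to infinity through $[\r,e^+]$ is comparable to $\prod_{g\le e}\psi_\lambda(g)$, and a flow-energy / Nash--Williams comparison for $\widetilde{\bf X}$ in the spirit of~\cite{CKS} turns recurrence/transience into whether $\inf_{\pi\in\Pi}\sum_{e\in\pi}\prod_{g\le e}\psi_\lambda(g)$ vanishes, that is, into $RT(\mathcal{T},\psi_\lambda)$ being below or above $1$.

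It then remains to compute $RT(\mathcal{T},\psi_\lambda)$. Straight from~\eqref{equ:RT} one has $RT(\mathcal{T},\varphi^k)=\frac{1}{k}RT(\mathcal{T},\varphi)$ for every $\varphi$ and every $k>0$, so $RT(\mathcal{T},\psi_\lambda)=\frac{1}{M+1}RT(\mathcal{T},\phi_\lambda)$, and a telescoping gives $\prod_{g\le e}\phi_\lambda(g)=\lambda/\sum_{k=1}^{|e|}\lambda^k$. If $\lambda=1$ this equals $1/|e|$, so $RT(\mathcal{T},\phi_1)=br_r(\mathcal{T})$ by~\eqref{branchingdef2} and $RT(\mathcal{T},\psi_1)=br_r(\mathcal{T})/(M+1)$. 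If $\lambda>1$ it equals $(\lambda-1)/(\lambda^{|e|}-1)$, of order $\lambda^{-|e|}$, so comparing with~\eqref{branchingdef} one finds $\inf_{\pi}\sum_{e\in\pi}\big(\prod_{g\le e}\phi_\lambda(g)\big)^\gamma>0$ iff $\lambda^\gamma<br(\mathcal{T})$, whence $RT(\mathcal{T},\phi_\lambda)=\log_\lambda br(\mathcal{T})$ and $RT(\mathcal{T},\psi_\lambda)=\log_\lambda br(\mathcal{T})/(M+1)$. If $\lambda<1$ then $\sum_{k=1}^{|e|}\lambda^k\le\lambda/(1-\lambda)$, so $\prod_{g\le e}\phi_\lambda(g)\ge 1-\lambda>0$ uniformly in $e$; since cutsets are non-empty the infimum in~\eqref{equ:RT} is positive for all $\gamma$, giving $RT(\mathcal{T},\psi_\lambda)=+\infty$.

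Feeding these values into the criterion of the first step yields the three items: for $\lambda=1$, transience when $br_r(\mathcal{T})>M+1$ and recurrence when $br_r(\mathcal{T})<M+1$; for $\lambda>1$, transience when $\log_\lambda br(\mathcal{T})>M+1$ (equivalently $br(\mathcal{T})>\lambda^{M+1}$) and recurrence in the reverse case; for $\lambda<1$, transience always. I expect the first step to be the main obstacle: the cookies break the Markov property, so no reversible-network theorem applies directly, and the work is to show — by matching upper and lower bounds on escape probabilities for $\widetilde{\bf X}$, as in~\cite{CKS} — that the forced returns contribute exactly the factor $\phi_\lambda^{\,M}$ and not some other function of $M$ and the local geometry. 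Once $\psi_\lambda$ is pinned down the computation in the third step is routine, with only the boundary exponents — excluded by the hypotheses $br_r(\mathcal{T})\ne M+1$, resp.\ $br(\mathcal{T})\ne\lambda^{M+1}$ — needing a word.
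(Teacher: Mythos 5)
Your proposal is correct and follows the paper's route: reduce to Theorem~\ref{maintheorem} (recurrence/transience governed by whether $RT(\mathcal{T},\psi_{M,\lambda})$ is below or above $1$, your $\psi_\lambda$ agreeing after simplification with the paper's $\psi_{M,\lambda}$ in the homogeneous case) and then compute $RT$, which is exactly the content of Lemma~\ref{mainlemma}. As a bonus, your scaling identity $RT(\mathcal{T},\varphi^{k})=RT(\mathcal{T},\varphi)/k$ gives clean closed forms $RT=br_r(\mathcal{T})/(M+1)$ for $\lambda=1$, $RT=\log_\lambda br(\mathcal{T})/(M+1)$ for $\lambda>1$, and $RT=\infty$ for $\lambda<1$; the last value is the right one and reveals that item~(3) of Lemma~\ref{mainlemma} as printed ($RT<1$) contains a sign typo --- its own proof (Case~II) and the transience conclusion require $RT>1$.
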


\begin{remark}
If, for a tree $\mathcal{T}$, $br(\mathcal{T})>1$, then we have that $br_r(\mathcal{T})=\infty$, as proved of Case V of the proof of Lemma \ref{mainlemma}. Therefore, the items $(1)$ and $(2)$ in Theorem \ref{th:digging0} are not contradictory.
\end{remark}
Note that, for a $b$-ary tree, $br(\mathcal T)=b$ and our result therefore agrees with Corollary 1.7 of \cite{BasdSingh}. In \cite{BasdSingh}, the authors prove that the walk is recurrent at criticality on regular trees, but this is not expected to be true in general.\\

We are about to state a sharp criterion of recurrence/transience in terms of a quantity $RT(\mathcal{T},\cdot)$ as defined in \eqref{equ:RT}, which will apply to the general case $\overline{M}=(m_\nu; \nu\in V)\in\mathbb{N}^V$. We will then prove that Theorem \ref{th:digging0} is a simple corollary of this general result.\\

For this purpose, we need some notation.
Let us define a function $\psi_{M,\lambda}$ on the edges of $E$ such that, for any $e\in E$,  $\psi_{M,\lambda}(e)=1$ if $|e|=1$ and, for any $e\in E$ with $|e|>1$,
\begin{equation}
\label{psi}
\begin{split}
    \psi_{M,\lambda}(e)&\Def\left (\frac{\lambda^{|e|-1}-1}{\lambda^{|e|}-1} \right)^{m_{e^+}+1}\text{ if }\lambda\neq1,\\
        \psi_{M,\lambda}(e)&\Def \left (\frac{|e|-1}{|e|} \right)^{m_{e^+}+1}\text{ if }\lambda=1.
    \end{split}
\end{equation}

As we will see in Section \ref{rubin}, $\psi_{M,\lambda}(e)$ corresponds to the probability that ${\bf X}$, or $\widetilde{\bf X}$, when restricted to $[\r,e^+]$ (i.e.~the path from the root to $e^+$), hits $e^+$ before returning to $\r$, after  having hit $e^-$. \\
We will prove the following result in Section \ref{sect_thdigg}.
\begin{theorem}
  \label{maintheorem}
  Consider an $M$-DRW$_\lambda$ ${\bf X}$ on an infinite, locally finite, rooted tree $\mathcal T$, with parameters $\lambda>0$ and $\overline{M}=(m_\nu; \nu\in V)\in\mathbb{N}^V$. If $RT(\mathcal{T},\psi_{M,\lambda})<1$ then ${\bf X}$ is recurrent. If $RT(\mathcal{T},\psi_{M,\lambda})>1$ and if
  \begin{equation}\label{condition1}
  \exists M\in \mathbb{N} \text{ such that } \sup_{\nu\in V}m_\nu \leq M,
\end{equation}
then ${\bf X}$ is transient. 
\end{theorem}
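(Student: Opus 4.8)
\textbf{Proof plan for Theorem \ref{maintheorem}.}

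The plan is to reduce the $M$-DRW$_\lambda$ to an electrical-network / random-walk-on-a-tree comparison via the auxiliary walk $\widetilde{\bf X}$, and then invoke the general criterion already used for \Or\ in \cite{CKS} together with the interpretation of $\psi_{M,\lambda}(e)$ as a one-dimensional crossing probability. First I would fix a vertex $e^+$ and analyze ${\bf X}$ restricted to the finite path $[\r,e^+]$: since each vertex carries only finitely many cookies, after the walk has accumulated $m_{e^-}+1$ visits to $e^-$ the dynamics on that segment becomes that of a $\lambda$-biased nearest-neighbor walk, and a direct gambler's-ruin computation on the segment $[\r,e^+]$ (summing the geometric weights $\lambda^{-|e|+1}$) yields exactly the formula \eqref{psi} for the probability of reaching $e^+$ before returning to $\r$ after first hitting $e^-$; this is the content announced before the theorem in Section \ref{rubin}. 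The key structural point is that these crossing probabilities multiply along a ray because the cookies are ``directed toward the root'': once $\widetilde{\bf X}$ sits at $e^-$ having exhausted the cookies below, the future excursion toward $e^+$ does not interact with cookies already eaten, so the probability of ever reaching generation-$n$ descendants factorizes (up to controllable corrections) as $\prod_{g\le e}\psi_{M,\lambda}(g)$.

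Next I would set up the two directions separately. For recurrence when $RT(\mathcal{T},\psi_{M,\lambda})<1$: by definition of $RT$ in \eqref{equ:RT}, for every $\gamma>1$ (in particular some $\gamma$ slightly above $1$) one has $\inf_{\pi\in\Pi}\sum_{e\in\pi}\big(\prod_{g\le e}\psi_{M,\lambda}(g)\big)^\gamma=0$; I would use a first-moment / cutset bound, showing that the probability that $\widetilde{\bf X}$ (equivalently ${\bf X}$) ever crosses a given cutset $\pi$ without returning to $\r$ is bounded by $\sum_{e\in\pi}\prod_{g\le e}\psi_{M,\lambda}(g)$ (each term being the probability of pushing through the particular edge $e$ of $\pi$ for the first time), hence can be made arbitrarily small, forcing a return almost surely. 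For transience when $RT(\mathcal{T},\psi_{M,\lambda})>1$ under the uniform bound \eqref{condition1}: here I would build an explicit flow / use a second-moment argument à la Lyons, exploiting that $\sup_\nu m_\nu\le M$ makes all crossing probabilities $\psi_{M,\lambda}(e)$ uniformly bounded below by a fixed positive power of $(|e|-1)/|e|$ (resp. the $\lambda$-analogue), so that the walk behaves comparably to a genuine biased/conductance walk on $\mathcal{T}$ with conductances $\prod_{g\le e}\psi_{M,\lambda}(g)$; the condition $RT>1$ then gives a cutset sum bounded away from $0$ along some $\gamma<1$, producing a finite-energy flow and transience. In both halves I expect the cleanest route is to quote Theorem 5 of \cite{CKS} (as was done for \Go\ in Proposition \ref{prop:likeCKS}) once the crossing-probability picture is in place, verifying that the $M$-DRW$_\lambda$ observed along $\widetilde{\bf X}$ fits its hypotheses.

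The main obstacle, and where real work is needed, is controlling the \emph{non-Markovian / reinforcement} feature: unlike \Go, the walk ${\bf X}$ is not a Markov chain, so crossing probabilities of disjoint subtrees are not independent, and excursions that return to already-visited regions may find the cookie landscape altered. The key observation that tames this is that cookies only ever bias moves \emph{toward the root}, so the first passage to any generation-$n$ vertex happens while all relevant cookies on the ancestral path are still ``fresh or irrelevant'' in a monotone way; I would make this precise by a coupling showing the quenched law of the relevant excursion stochastically dominates / is dominated by the corresponding biased-walk excursion, and by checking that the finiteness condition \eqref{condition1} is exactly what prevents the per-edge crossing probabilities from degenerating to $0$ as $|e|\to\infty$ (without it, $\prod_{g\le e}\psi$ could decay too fast and transience could fail even when $RT>1$ formally). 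Assembling these comparisons into the cutset estimates above, and verifying the hypotheses of the \cite{CKS} criterion, completes the proof.
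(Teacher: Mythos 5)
Your overall architecture (interpret $\psi_{M,\lambda}(e)$ as a crossing probability for the walk restricted to $[\r,e^+]$, first-moment cutset bound for recurrence, flow/energy argument for transience) matches the paper's, and the recurrence half is essentially right once one passes to the ``restricted'' percolation cluster $\mathcal{C}_{CP}(\r)=\{e: T^{(e)}(e^+)<T^{(e)}(\r)\}$, for which $\mathbb{P}(e\in\mathcal{C}_{CP}(\r))=\prod_{g\le e}\psi_{M,\lambda}(g)$ holds exactly (no ``controllable corrections'' are needed there).

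The genuine gap is in the transience direction, precisely at the point you flag as ``where real work is needed.'' A finite-energy flow for the conductances $\prod_{g\le e}\psi_{M,\lambda}(g)$ does not by itself give transience of a \emph{correlated} percolation: the second-moment/effective-conductance lower bound (Theorem 5.19 of \cite{LP}) requires a quantitative two-point estimate, namely quasi-independence in the sense of Definition \ref{defquasi}: $\mathbb{P}(e_1,e_2\in\C(\r)\mid e_1\wedge e_2\in\C(\r))\le C_Q\,\mathbb{P}(e_1\in\C(\r)\mid\cdot)\,\mathbb{P}(e_2\in\C(\r)\mid\cdot)$ with a uniform constant $C_Q$. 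Your proposed fix --- a stochastic-domination coupling against a biased walk --- gives one-sided monotone comparisons but not this multiplicative correlation bound, and domination alone cannot rule out that the events $\{e_1\in\C(\r)\}$ and $\{e_2\in\C(\r)\}$ are too positively correlated in a way that kills the second-moment method. The paper's route is Rubin's construction: exponential and Gamma clocks make the extensions on edge-disjoint subtrees exactly independent, and the only dependence between $e_1$ and $e_2$ flows through the time $L(e)$ consumed on the shared stem, which is \emph{exactly exponential} with parameter $p=(\sum_{g\le e}\lambda^{|g|-1})^{-1}$; the inequality $e^{-p(x_1\vee x_2)}\le e^{-px_1/2}e^{-px_2/2}$ then decouples the two branches at the cost of halving $p$, and a telescoping computation shows this halving changes each one-sided probability by at most a factor $\exp(M+1)$. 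This is where condition \eqref{condition1} actually enters: it bounds the quasi-independence constant $C_Q$ uniformly over pairs of edges. Your stated role for \eqref{condition1} (preventing $\prod_{g\le e}\psi$ from decaying too fast) is not the right one --- that decay is already priced into the hypothesis $RT(\mathcal{T},\psi_{M,\lambda})>1$, since $\psi_{M,\lambda}$ carries the exponent $m_{e^+}+1$. Relatedly, you cannot simply quote Theorem 5 of \cite{CKS} as in Proposition \ref{prop:likeCKS}: that shortcut was available for \Go{} because it is a Markov chain in a fixed environment, whereas for the $M$-DRW$_\lambda$ the verification of the correlation hypothesis is exactly the quasi-independence lemma you are missing.
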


{The following result concerns the homogeneous case.  Theorem \ref{th:digging0}  is  a straightforward consequence of Theorem \ref{maintheorem} and Lemma \ref{mainlemma}.}

\begin{lemma}\label{mainlemma}
Consider an $M$-DRW$_\lambda$ ${\bf X}$ on an infinite, locally finite, rooted tree $\mathcal T$, with parameters $\lambda>0$ and $M=(m_\nu; \nu\in V)$ such that $m_\r=0$ and $m_\nu=M$ for all $\nu\in V\setminus \{\r\}$. We have that
\begin{enumerate}
\item for $\lambda=1$, if  $br_r(\mathcal T)<{M+1}$ then $RT(\mathcal{T},\psi_{M,\lambda})<1$ and if $br_r(\mathcal T)>{M+1}$ then $RT(\mathcal{T},\psi_{M,\lambda})>1$;
\item for $\lambda>1$,  if  $br(\mathcal T)<\lambda^{M+1}$ then $RT(\mathcal{T},\psi_{M,\lambda})<1$ and if $br(\mathcal T)>\lambda^{M+1}$ then $RT(\mathcal{T},\psi_{M,\lambda})>1$;
\item for $\lambda<1$, we have $RT(\mathcal{T},\psi_{M,\lambda})<1$.
\end{enumerate}
\end{lemma}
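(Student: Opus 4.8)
The plan is to reduce all three items to an explicit evaluation of the path-products $\prod_{g\le e}\psi_{M,\lambda}(g)$ appearing in \eqref{equ:RT}, and then to compare the resulting cutset sums with those defining $br(\mathcal T)$ and $br_r(\mathcal T)$ in \eqref{branchingdef}--\eqref{branchingdef2}. The first step is the telescoping identity: since in the homogeneous regime $m_{e^+}=M$ for every edge $e$ and $\psi_{M,\lambda}(g)=1$ whenever $|g|=1$, an edge $e$ with $|e|=k$ satisfies
\[
\prod_{g\le e}\psi_{M,\lambda}(g)=\Bigl(\prod_{j=2}^{k}\tfrac{\lambda^{j-1}-1}{\lambda^{j}-1}\Bigr)^{M+1}=\Bigl(\tfrac{\lambda-1}{\lambda^{|e|}-1}\Bigr)^{M+1}\quad(\lambda\ne1),\qquad
\prod_{g\le e}\psi_{M,1}(g)=\Bigl(\prod_{j=2}^{k}\tfrac{j-1}{j}\Bigr)^{M+1}=|e|^{-(M+1)}.
\]
I would also record the elementary monotonicity fact that, for any $f:E\to(0,1]$, the map $\gamma\mapsto\inf_{\pi\in\Pi}\sum_{e\in\pi}f(e)^{\gamma}$ is nonincreasing, so that $\{\gamma>0:\inf_\pi\sum_e f(e)^\gamma>0\}$ is an interval whose right endpoint is $RT(\mathcal T,f)$; the same applies to the summands $\gamma^{-|e|}$ and $|e|^{-\gamma}$ in \eqref{branchingdef}--\eqref{branchingdef2}. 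This is what lets me read off the critical exponents.

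For item~(1), $\lambda=1$, the identity above gives $\bigl(\prod_{g\le e}\psi_{M,1}(g)\bigr)^\gamma=|e|^{-(M+1)\gamma}$, so the substitution $\delta=(M+1)\gamma$ yields $RT(\mathcal T,\psi_{M,1})=br_r(\mathcal T)/(M+1)$ (with the conventions $0/(M+1)=0$, $\infty/(M+1)=\infty$); hence $RT<1\iff br_r(\mathcal T)<M+1$ and $RT>1\iff br_r(\mathcal T)>M+1$. For item~(2), $\lambda>1$, the two-sided bound $(\lambda-1)\lambda^{-|e|}\le\frac{\lambda-1}{\lambda^{|e|}-1}\le\lambda\,\lambda^{-|e|}$ (valid for all $|e|\ge1$) lets me write $\frac{\lambda-1}{\lambda^{|e|}-1}=c_e\,\lambda^{-|e|}$ with $c_e\in[\lambda-1,\lambda]$; for each fixed $\gamma$ the factor $c_e^{(M+1)\gamma}$ is then bounded between two positive constants independent of $e$ and of the cutset, whence
\[
\inf_{\pi}\sum_{e\in\pi}\Bigl(\tfrac{\lambda-1}{\lambda^{|e|}-1}\Bigr)^{(M+1)\gamma}>0\ \iff\ \inf_{\pi}\sum_{e\in\pi}\bigl(\lambda^{(M+1)\gamma}\bigr)^{-|e|}>0\ \iff\ \lambda^{(M+1)\gamma}<br(\mathcal T),
\]
the last step by \eqref{branchingdef} (note $\lambda^{(M+1)\gamma}>1$). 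For $1<br(\mathcal T)<\infty$ this gives $RT(\mathcal T,\psi_{M,\lambda})=\frac{\log br(\mathcal T)}{(M+1)\log\lambda}$ and therefore $RT<1\iff br(\mathcal T)<\lambda^{M+1}$ and $RT>1\iff br(\mathcal T)>\lambda^{M+1}$; the degenerate cases $br(\mathcal T)\in\{1,\infty\}$ give $RT\in\{0,\infty\}$ respectively and make the assertion trivially true.

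For item~(3), $\lambda<1$, the same telescoping gives $\prod_{g\le e}\psi_{M,\lambda}(g)=\bigl(\tfrac{1-\lambda}{1-\lambda^{|e|}}\bigr)^{M+1}$, and since $1-\lambda^{|e|}\in[1-\lambda,1)$ for every $|e|\ge1$ this product lies in $\bigl((1-\lambda)^{M+1},1\bigr]$; in particular it is bounded below, uniformly in $e$, by the positive constant $(1-\lambda)^{M+1}$. Consequently, for every $\gamma>0$ and every (nonempty) cutset $\pi\in\Pi$ one has $\sum_{e\in\pi}\bigl(\prod_{g\le e}\psi_{M,\lambda}(g)\bigr)^{\gamma}\ge(1-\lambda)^{(M+1)\gamma}>0$, hence $\inf_{\pi}\sum_{e\in\pi}\bigl(\prod_{g\le e}\psi_{M,\lambda}(g)\bigr)^{\gamma}>0$ for all $\gamma>0$; with the convention of \eqref{equ:RT} this reads $RT(\mathcal T,\psi_{M,\lambda})=\infty$, which is exactly the input Theorem~\ref{maintheorem} needs to conclude the transience stated in Theorem~\ref{th:digging0}(3).

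The computations are short once the telescoping is in hand, so the main obstacle is bookkeeping rather than ideas: one must make sure the comparisons used in items~(1)--(2) are uniform in the cutset $\pi$, so that replacing $\frac{\lambda-1}{\lambda^{|e|}-1}$ by $\lambda^{-|e|}$ (or substituting $\delta=(M+1)\gamma$) does not alter whether the inner infimum vanishes --- this is precisely what makes the passage to $br(\mathcal T)$ and $br_r(\mathcal T)$ legitimate --- and one must carefully dispose of the degenerate values $br(\mathcal T),br_r(\mathcal T)\in\{0,1,\infty\}$ so that the stated strict inequalities hold (vacuously or trivially) in those cases as well. A convenient consistency check is to verify the closed forms above against the probabilistic description of $\psi_{M,\lambda}(e)$ recalled right after \eqref{psi}.
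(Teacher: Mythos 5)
Your proof is correct and follows essentially the same blueprint as the paper's: compute the telescoping product to get the closed form of $\Psi_{M,\lambda}(e)$, then compare cutset sums with those defining $br$ and $br_r$. You get to the answer a bit more cleanly by proving the uniform two-sided bound $(\lambda-1)\lambda^{-|e|}\le\frac{\lambda-1}{\lambda^{|e|}-1}\le\lambda\,\lambda^{-|e|}$ and reading off the exact values
\[
RT(\mathcal T,\psi_{M,1})=\frac{br_r(\mathcal T)}{M+1},\qquad RT(\mathcal T,\psi_{M,\lambda})=\frac{\log br(\mathcal T)}{(M+1)\log\lambda}\ (\lambda>1),
\]
whereas the paper inserts auxiliary parameters $\delta$ and argues by cases; these are equivalent once one observes, as you do, that the constants are uniform over cutsets.

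One thing you should flag rather than pass over silently: in item~(3) you correctly obtain $RT(\mathcal T,\psi_{M,\lambda})=\infty>1$ for $\lambda<1$, and you correctly note that this is what Theorem~\ref{maintheorem} needs to deduce the transience asserted in Theorem~\ref{th:digging0}(3). But the lemma as stated says $RT(\mathcal T,\psi_{M,\lambda})<1$ for $\lambda<1$ --- the opposite sign. This is a typo in the lemma statement (it should read $>1$), and indeed the paper's own Case~II of the proof establishes $RT>1$ for $\lambda<1$. Your derivation is the correct one; it would be good practice, when a derivation contradicts the literal statement being proved, to point out the discrepancy explicitly rather than quietly substituting the corrected claim.

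Two very minor points on rigor, neither a gap: the biconditional $\inf_\pi\sum_e\mu^{-|e|}>0\iff\mu<br(\mathcal T)$ you invoke in item~(2) is not literally an equivalence at $\mu=br(\mathcal T)$ (the critical case can go either way), but since you only need the two open half-lines $\mu<br$ and $\mu>br$, this does not affect the conclusion; and in item~(3) you should say why cutsets are nonempty (every cutset in an infinite tree rooted at $\r$ must contain at least one edge at generation~$1$), which you implicitly use to get a strictly positive lower bound on the cutset sum.
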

The proofs of Theorem~\ref{maintheorem} and Lemma~\ref{mainlemma} are given in Section~\ref{sec-digg}.
 

\section{Preliminary results} \label{randomtree}

Proposition \ref{propperc} below can be proved following line by line the argument in Section 8 of \cite{CKS}. For the sake of completeness, we give an outline of the proof in the Appendix \ref{appendix}. It relies on the concept of quasi-independent percolation defined as below (see also \cite{LP}, page 144).  In the following, we denote {by} $\mathcal{C}(\r)$ the cluster of open edges containing the root $\r$.

\begin{definition}\label{defquasi}
An edge-percolation is said to be  \emph{quasi-independent} if there exists a constant $C_Q\in(0,\infty)$ such that, for any two edges $e_1,e_2\in E$ with common ancestor $e_1\wedge e_2$, we have that
\begin{equation}\label{qindep0} 
\begin{split}
\bP\big(\left.e_1,e_2\in\C(\r)\right|e_1\wedge e_2\in \C(\r)\big)\le& C_Q\bP\big(\left.e_1\in\C(\r)\right|e_1\wedge e_2\in \C(\r)\big)\\
&\times \bP\big(\left.e_2\in\C(\r)\right|e_1\wedge e_2\in \C(\r)\big).
\end{split}
\end{equation}
\end{definition}
This previous notion is useful when one tries to prove the super-criticality of a correlated percolation.

\begin{proposition}\label{propperc}
Consider an edge-percolation ({not necessarily} independent), such that edges at generation 1 are open almost surely and, for $e_1\in E$ with $|e_1|>1$,
\begin{equation}\label{gendefperco}
\mathbf{P}\left(\left.e_1\in\mathcal{C}(\r)\right|e_0\in\mathcal{C}(\r)\right)=\psi(e_1)>0,
\end{equation}
where $e_0\sim e_1$ and $e_0<e_1$. If $RT(\mathcal{T},\psi)<1$ then $\mathcal{C}(\r)$ is finite almost surely. If the percolation is quasi-independent and if $RT(\mathcal{T},\psi)>1$ then $\mathcal{C}(\r)$ is infinite with positive probability.
\end{proposition}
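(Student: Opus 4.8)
\textbf{Proof proposal for Proposition \ref{propperc}.}

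The plan is to prove the two halves separately, both by comparison with weighted sums over cutsets, exploiting the recursive structure \eqref{gendefperco} and, for the second half, the second-moment method made available by quasi-independence. For the first half ($RT(\mathcal{T},\psi)<1$ implies $\mathcal{C}(\r)$ finite a.s.), I would first observe that iterating \eqref{gendefperco} along the path from the root gives, for any edge $e$ with $|e|>1$,
\[
\mathbf{P}\big(e\in\mathcal{C}(\r)\big)=\prod_{1<g\le e}\psi(g)=\prod_{g\le e}\psi(g),
\]
the last equality because $\psi(g)=1$ when $|g|=1$; indeed each factor is a one-step conditional probability and the events are nested since $e\in\mathcal{C}(\r)$ forces all ancestors of $e$ to be open. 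The cluster $\mathcal{C}(\r)$ is infinite if and only if, for every cutset $\pi\in\Pi$, some edge of $\pi$ lies in $\mathcal{C}(\r)$; hence for any $\pi\in\Pi$,
\[
\mathbf{P}\big(\mathcal{C}(\r)\text{ infinite}\big)\le \sum_{e\in\pi}\mathbf{P}\big(e\in\mathcal{C}(\r)\big)=\sum_{e\in\pi}\prod_{g\le e}\psi(g).
\]
Now $RT(\mathcal{T},\psi)<1$ means that for the exponent $\gamma=1$ we do \emph{not} have $\inf_{\pi}\sum_{e\in\pi}\big(\prod_{g\le e}\psi(g)\big)^1>0$; more precisely, by definition of the supremum in \eqref{equ:RT}, for every $\gamma>RT(\mathcal{T},\psi)$ — in particular for $\gamma=1$ — we have $\inf_{\pi\in\Pi}\sum_{e\in\pi}\big(\prod_{g\le e}\psi(g)\big)^{\gamma}=0$. (A small monotonicity remark is needed here: the quantity $\inf_\pi \sum_{e\in\pi}(\prod_{g\le e}\psi(g))^\gamma$ is non-increasing in $\gamma$ since $\prod_{g\le e}\psi(g)\le 1$, so the set of admissible $\gamma$ in \eqref{equ:RT} is a down-set and $\gamma=1$ indeed falls outside it.) Taking the infimum over $\pi$ on the right-hand side above yields $\mathbf{P}(\mathcal{C}(\r)\text{ infinite})=0$, which is the claim.

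For the second half ($RT(\mathcal{T},\psi)>1$, quasi-independent, implies $\mathcal{C}(\r)$ infinite with positive probability), I would run the standard second-moment / Lyons-type argument on a fixed $\gamma$ with $1<\gamma<RT(\mathcal{T},\psi)$, so that $c_0:=\inf_{\pi\in\Pi}\sum_{e\in\pi}\big(\prod_{g\le e}\psi(g)\big)^{\gamma}>0$. The natural approach is via a flow or, equivalently, via truncating at a cutset $\pi$ and defining the random variable $Z_\pi=\sum_{e\in\pi} w_e\,\1_{\{e\in\mathcal{C}(\r)\}}$ for suitable deterministic weights $w_e\ge0$ supported on $\pi$ with $\sum_{e\in\pi}w_e=1$ (e.g.\ the weights coming from the unit flow that realizes the cutset-sum, or simply $w_e\propto (\prod_{g\le e}\psi(g))^{\gamma}$). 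One computes the first moment $\mathbf{E}[Z_\pi]=\sum_{e\in\pi}w_e\prod_{g\le e}\psi(g)$ and the second moment $\mathbf{E}[Z_\pi^2]=\sum_{e_1,e_2\in\pi}w_{e_1}w_{e_2}\,\mathbf{P}(e_1,e_2\in\mathcal{C}(\r))$; quasi-independence \eqref{qindep0} bounds the latter by $C_Q\sum_{e_1,e_2}w_{e_1}w_{e_2}\,\mathbf{P}(e_1\wedge e_2\in\mathcal{C}(\r))\,\mathbf{P}(e_1\in\mathcal{C}(\r)\mid e_1\wedge e_2\in\mathcal{C}(\r))\,\mathbf{P}(e_2\in\mathcal{C}(\r)\mid e_1\wedge e_2\in\mathcal{C}(\r))$, which (writing $\mathbf{P}(e\in\mathcal{C}(\r))=\mathbf{P}(e\wedge e'\in\mathcal{C}(\r))\cdot\mathbf{P}(e\in\mathcal{C}(\r)\mid e\wedge e'\in\mathcal{C}(\r))$ and using $\psi\le1$ so that $\mathbf{P}(e\in\mathcal{C}(\r)\mid v\in\mathcal{C}(\r))\le \prod_{g\le e,\, g>v}\psi(g)$) telescopes into a sum of the form $C_Q\sum_{v}\big(\text{contribution along }[\r,v]\big)$ controlled by the cutset-sum at exponent $\gamma=2$-type expressions — here one chooses the weights precisely so that Paley--Zygmund, $\mathbf{P}(Z_\pi>0)\ge \mathbf{E}[Z_\pi]^2/\mathbf{E}[Z_\pi^2]$, gives a bound \emph{uniform in $\pi$}, say $\mathbf{P}(Z_\pi>0)\ge \delta>0$ with $\delta$ depending only on $C_Q$ and $c_0$. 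Since $\{Z_\pi>0\}=\{\pi\cap\mathcal{C}(\r)\ne\varnothing\}$ and these events are decreasing along an exhausting sequence of cutsets $\pi_n$ (each $\pi_{n+1}$ further from the root), $\mathbf{P}(\mathcal{C}(\r)\text{ infinite})=\lim_n\mathbf{P}(\pi_n\cap\mathcal{C}(\r)\ne\varnothing)\ge\delta>0$.

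The main obstacle I anticipate is the bookkeeping in the second-moment estimate: one must choose the weights $w_e$ on the cutset correctly (this is exactly the max-flow/min-cut duality that converts $\inf_\pi\sum_e(\prod_{g\le e}\psi(g))^\gamma>0$ into the existence of a flow with controlled energy), and then verify that the quasi-independence bound, after the telescoping of conditional probabilities along shared ancestral paths, really does collapse to a quantity comparable to the $\gamma$-cutset-sum rather than something larger. This is precisely the content of Section 8 of \cite{CKS}, so I would set up the flow, state the first- and second-moment bounds, invoke Paley--Zygmund, and refer to \cite{CKS} (or reproduce the computation in Appendix \ref{appendix} as the authors indicate) for the telescoping step; the recurrence direction, by contrast, is the short first-moment computation above and should be essentially immediate.
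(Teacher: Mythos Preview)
Your proposal is correct and follows the same strategy as the paper. The first half (first-moment/union-bound over a cutset, then take the infimum) is identical to the paper's Appendix~\ref{appendix}. For the second half, both you and the paper run the second-moment method enabled by quasi-independence; the only difference is packaging. The paper routes the argument through effective conductance: it sets $c(e)=\Psi(e)/(1-\psi(e))$, invokes the Lyons--Peres bound $\mathbf{P}(|\mathcal{C}(\r)|=\infty)\ge C_Q^{-1}\,\mathcal{C}_{\rm eff}/(1+\mathcal{C}_{\rm eff})$ (their Theorem~5.19, which \emph{is} the packaged Paley--Zygmund inequality), and then exhibits a finite-energy flow via Proposition~\ref{propLyons} with the specific choice $u_e=(1-\psi(e))\,\Psi(e)^{\gamma-1}$, for which $u_e c(e)=\Psi(e)^\gamma$ and $\sum_{g\le e}u_g\le C_\gamma$ by a telescoping bound. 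Your direct Paley--Zygmund on $Z_\pi$ is exactly this argument unwound; the ``bookkeeping obstacle'' you anticipate is precisely what the conductance/flow formalism and the particular weights $u_e$ above handle cleanly, so if you carry out your plan you will end up reproducing that computation.
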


The proof of Proposition \ref{propperc} above is postponed in Appendix \ref{appendix}.\\

Let us first apply this to a particular percolation in order to obtain a sufficient criterion for subcriticality.

\begin{corollary}\label{coroperc}
Let $\mathcal{T}$ be a tree with branching ruin number $br_r(\mathcal{T})=b\in[0,\infty]$. Fix a parameter $\delta >0$ and perform a  percolation (not necessarily independent) on $\mathcal{T}$ such that \eqref{gendefperco} holds and assume moreover that $\psi(e)= 1-\delta |e|^{-1}$ as soon as $|e|>n_0$, for some {integer} $n_0>1$. If $\delta>b$ then the percolation is  subcritical.
\end{corollary}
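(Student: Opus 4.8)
The plan is to invoke Proposition~\ref{propperc} directly, so the whole task reduces to showing that $RT(\mathcal{T},\psi)<1$ under the hypotheses of the corollary, where $\psi(e)=1-\delta|e|^{-1}$ for $|e|>n_0$. Since $RT(\mathcal{T},\psi)<1$ is exactly the criterion in the first half of Proposition~\ref{propperc} for $\mathcal{C}(\r)$ to be finite almost surely, i.e.\ for the percolation to be subcritical, the corollary follows once this inequality is established. So the real content is purely a computation about the function $RT$ and the branching-ruin number.

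First I would estimate the product $\prod_{g\le e}\psi(g)$ for an edge $e$ with $|e|=n$ large. Splitting off the bounded number of factors with $|g|\le n_0$ (each in $(0,1]$, contributing a multiplicative constant that only helps), the main part is $\prod_{k=n_0+1}^{n}\bigl(1-\delta k^{-1}\bigr)$. Taking logarithms, $\sum_{k=n_0+1}^n \log(1-\delta k^{-1}) = -\delta \sum_{k=n_0+1}^n k^{-1} + O(1) = -\delta \log n + O(1)$, so that $\prod_{g\le e}\psi(g) \asymp n^{-\delta} = |e|^{-\delta}$, with implied constants uniform in $e$. (One should be slightly careful that for $k$ close to $\delta$ the factor $1-\delta k^{-1}$ could be non-positive, but since $n_0>1$ is fixed and we only need asymptotics, we may further assume $n_0>\delta$ or simply absorb finitely many problematic factors into the constant; alternatively note $\psi>0$ is assumed.) Thus there are constants $0<c_1\le c_2<\infty$ with $c_1 |e|^{-\delta} \le \prod_{g\le e}\psi(g) \le c_2 |e|^{-\delta}$ for all $e\in E$.

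Next I would feed this two-sided bound into the definition \eqref{equ:RT} of $RT(\mathcal{T},\psi)$. For any $\gamma>0$ and any cutset $\pi$,
\[
\sum_{e\in\pi}\Bigl(\prod_{g\le e}\psi(g)\Bigr)^\gamma \asymp \sum_{e\in\pi} |e|^{-\gamma\delta},
\]
with constants depending only on $c_1,c_2,\gamma$, not on $\pi$. Hence $\inf_{\pi\in\Pi}\sum_{e\in\pi}(\prod_{g\le e}\psi(g))^\gamma>0$ if and only if $\inf_{\pi\in\Pi}\sum_{e\in\pi}|e|^{-\gamma\delta}>0$. Comparing with the definition \eqref{branchingdef2} of $br_r(\mathcal{T})$, the latter holds precisely when $\gamma\delta < br_r(\mathcal{T})=b$, i.e.\ when $\gamma<b/\delta$ (and fails for $\gamma>b/\delta$). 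Therefore $RT(\mathcal{T},\psi)=b/\delta$. Since by assumption $\delta>b$, we get $RT(\mathcal{T},\psi)=b/\delta<1$, and Proposition~\ref{propperc} yields that $\mathcal{C}(\r)$ is finite almost surely, i.e.\ the percolation is subcritical.

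The only mildly delicate point, and the one I would be most careful about writing up, is the uniformity of the constants $c_1,c_2$ in the edge $e$ and then the transfer of this uniformity through the infimum over cutsets — this is what makes the comparison $\sum_{e\in\pi}(\prod_{g\le e}\psi(g))^\gamma \asymp \sum_{e\in\pi}|e|^{-\gamma\delta}$ legitimate simultaneously for all $\pi$. There is also a small bookkeeping issue in handling the finitely many edges with $|e|\le n_0$, where $\psi$ is only known to be positive, not of the form $1-\delta|e|^{-1}$; since any infinite self-avoiding ray passes through exactly one edge of each generation, these contribute a bounded number of factors bounded away from $0$ and $\infty$ along every ray, hence are harmless. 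Everything else is the routine harmonic-sum asymptotic $\sum_{k\le n}k^{-1}=\log n+O(1)$.
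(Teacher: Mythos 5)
Your proof is correct and takes essentially the same route as the paper's: bound $\prod_{g\le e}\psi(g)$ by a constant times $|e|^{-\delta}$ via the harmonic sum, compare $\sum_{e\in\pi}|e|^{-\gamma\delta}$ against the definition of $br_r(\mathcal{T})$ to get $RT(\mathcal{T},\psi)<1$, and invoke Proposition~\ref{propperc}. The only (harmless) difference is that you prove a two-sided comparison and identify $RT(\mathcal{T},\psi)=b/\delta$, whereas the paper only needs the upper bound; your explicit handling of the finitely many generations $|e|\le n_0$ is in fact slightly more careful than the paper's.
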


\begin{proof}
For a cutset $\pi$, let $|\pi| = \inf\{|e| \colon e \in \pi\}$.
First, note that for any $\alpha>b$,  {
\[
\inf_{\pi\in\Pi:|\pi|\le n_0}\sum_{e\in\pi} |e|^{-\alpha}\ge n_0^{-\alpha}>0,
\]
and therefore}
\[
\inf_{\pi\in\Pi:|\pi|>n_0}\sum_{e\in\pi} |e|^{-\alpha}=\inf_{\pi\in\Pi}\sum_{e\in\pi} |e|^{-\alpha}=0.
\]
Second, for any $\gamma>b/\delta$, we have 
\begin{equation}
\begin{split}
\inf_{\pi\in\Pi}\sum_{e\in\pi} \prod_{g\le e}\left(\psi(g) \right)^\gamma &\le \inf_{\pi\in\Pi:|\pi|>n_0}\sum_{e\in\pi} \prod_{g\le e}\left(\psi(g) \right)^\gamma\\
&\le\inf_{\pi\in\Pi:|\pi|>n_0}\sum_{e\in\pi} \prod_{g\le e} \left(1-\delta |g|^{-1}\right)^{\gamma}\\
&\le\inf_{\pi\in\Pi:|\pi|>n_0}\sum_{e\in\pi}  \exp\left(-\gamma\delta\sum_{i=1}^{|e|} i^{-1}\right)\\
&\leq \inf_{\pi\in\Pi:|\pi|>n_0}\sum_{e\in\pi} |e|^{-\gamma\delta}=0.
\end{split}
\end{equation}
Hence $RT(\mathcal{T},\psi)<1$ and by using Proposition~\ref{propperc} the cluster $\mathcal{T}_\delta$ is finite, almost surely.
\end{proof}

{Next,} we use Proposition~\ref{propperc} and Corollary \ref{coroperc} to prove the following result.

\begin{proposition} \label{siham}
Let $\mathcal{T}$ be a tree with branching ruin number $br_r(\mathcal{T})=b\in[0,\infty]$. Fix a parameter $\delta >0$ and perform a quasi-independent percolation on $\mathcal{T}$ such that \eqref{gendefperco} holds and assume moreover that $\psi(e)\ge 1-\delta |e|^{-1}$ as soon as $|e|>n_0$, for some {integer} $n_0>1$. Let $\mathcal{T}_\delta$ be the connected cluster containing the root $\r$. We have that
\begin{enumerate}
\item if $\delta <b$ then  $\mathcal{T}_\delta$  is infinite with positive probability;
\item {for any  $\delta\in(0,b)$ we have that, with positive probability, $br_r(\mathcal{T}_\delta)\ge b-2\delta$.}
\end{enumerate}
\end{proposition}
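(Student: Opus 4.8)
The plan is to deduce both items from Proposition \ref{propperc} and Corollary \ref{coroperc} by comparing the given percolation with a model whose marginals are exactly $1-\delta|e|^{-1}$. For item (1), I would first observe that since $\psi(e)\ge 1-\delta|e|^{-1}$ for $|e|>n_0$, a standard monotone coupling lets me assume without loss of generality that equality holds (decreasing the open-probabilities only shrinks the cluster, so a subcritical upper bound transfers, and for supercriticality we want the worst case). So set $\widetilde\psi(e)=\psi(e)$ for $|e|\le n_0$ and $\widetilde\psi(e)=1-\delta|e|^{-1}$ for $|e|>n_0$; this percolation is still quasi-independent with the same constant $C_Q$ (the conditional two-point inequality is untouched by lowering marginals in a monotone coupling — this deserves a line of care). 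Then I must show $RT(\mathcal{T},\widetilde\psi)>1$ when $\delta<b$. Using $\prod_{g\le e}(1-\delta|g|^{-1})^\gamma \asymp |e|^{-\gamma\delta}$ up to a multiplicative constant depending on $n_0,\delta,\gamma$ (via $\sum_{i\le N} i^{-1}=\log N+O(1)$ and the fact that finitely many factors with $|g|\le n_0$ contribute a bounded constant — note $1-\delta|g|^{-1}$ could be negative or zero for small $|g|$, which is exactly why the cutoff $|\pi|>n_0$ and the separate treatment of shallow cutsets in Corollary \ref{coroperc} matters), I get that for $\gamma\delta<b$, $\inf_{\pi}\sum_{e\in\pi}\prod_{g\le e}\widetilde\psi(g)^\gamma \gtrsim \inf_\pi \sum_{e\in\pi}|e|^{-\gamma\delta}>0$ by definition of $b=br_r(\mathcal{T})$. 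Since $\delta<b$ we may pick $\gamma\in(1,b/\delta)$, so $RT(\mathcal{T},\widetilde\psi)\ge\gamma>1$, and Proposition \ref{propperc} gives an infinite cluster with positive probability.

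For item (2), conditionally on $\{|\mathcal{T}_\delta|=\infty\}$ I need a lower bound on $br_r(\mathcal{T}_\delta)$. The natural route is a second-moment / recursive argument on the subtree $\mathcal{T}_\delta$: run, inside $\mathcal{T}_\delta$, a further independent percolation with survival parameter tuned to the threshold $b-2\delta$, i.e. keep each edge $e$ of $\mathcal{T}_\delta$ with probability roughly $(1-\delta'|e|^{-1})/(1-\delta|e|^{-1})\approx 1-(\delta'-\delta)|e|^{-1}$ for a suitable $\delta'$, so that the composed percolation on $\mathcal{T}$ has marginals $\approx 1-\delta'|e|^{-1}$. One checks the composition is again quasi-independent, and by item (1) applied with $\delta'$ in place of $\delta$, if $\delta'<b$ the composed cluster is infinite with positive probability — but the composed cluster is a subtree of $\mathcal{T}_\delta$, so on that event $\mathcal{T}_\delta$ contains an infinite subtree surviving a $(1-\delta'|e|^{-1})$-percolation, which by Proposition \ref{propperc} (the subcritical direction, contrapositive) forces $br_r(\mathcal{T}_\delta)\ge \gamma$ for every $\gamma<\delta'/?$... more precisely, it forces $RT(\mathcal{T}_\delta,\widetilde\psi_{\delta'})$ to be $\ge1$ on a positive-probability event, hence $br_r(\mathcal{T}_\delta)\ge b-2\delta$ once one tracks that the surviving-cluster criterion is equivalent to a branching-ruin lower bound. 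Choosing $\delta'$ close to $b$ (say $\delta'=b-\varepsilon$) and the intermediate room of size $\delta$ on each side accounts for the $2\delta$ gap in the statement. A cleaner packaging: on $\{|\mathcal{T}_\delta|=\infty\}$, the edges of $\mathcal{T}_\delta$ that survive the extra thinning with parameter $\delta'-\delta$ form, by quasi-independence and item (1), an infinite cluster with positive conditional probability whenever $\delta'<b$; by the subcritical half of Corollary \ref{coroperc}, an infinite $(1-(\delta'-\delta)|\cdot|^{-1})$-open-cluster inside $\mathcal{T}_\delta$ cannot exist if $br_r(\mathcal{T}_\delta)<\delta'-\delta$, giving $br_r(\mathcal{T}_\delta)\ge \delta'-\delta$ on that event; optimizing $\delta'\uparrow b$ yields $br_r(\mathcal{T}_\delta)\ge b-\delta$, which is even stronger than claimed, so the factor $2$ is the safe version allowing for the $n_0$-cutoff losses.

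The main obstacle I expect is twofold. First, the bookkeeping around the cutoff $n_0$ and the factors $1-\delta|g|^{-1}$ with $|g|\le n_0$ (which may be non-positive): one must argue, as in Corollary \ref{coroperc}, that cutsets with $|\pi|\le n_0$ are irrelevant for the $\inf$ and that on deep cutsets the finitely many shallow factors only contribute a harmless positive constant; this has to be done uniformly over cutsets. Second, and more delicate, is verifying that the \emph{composition} of the given quasi-independent percolation with an independent thinning is still quasi-independent with a controlled constant, and that item (1) is genuinely applicable to $\mathcal{T}_\delta$ \emph{conditionally} on its being infinite — i.e.\ that one can condition on $\mathcal{T}_\delta$ and treat the extra thinning as an independent percolation on the (random) tree $\mathcal{T}_\delta$ to which Proposition \ref{propperc} applies. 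Handling this conditioning rigorously (perhaps by working on the event $\{|\mathcal{T}_\delta|=\infty\}$ and invoking the percolation results for the fixed realization of $\mathcal{T}_\delta$, using that quasi-independence is a pathwise statement about conditional probabilities) is where the real work lies.
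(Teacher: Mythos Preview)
Your overall strategy matches the paper's: for item (1) you compute $RT(\mathcal{T},\cdot)>1$ from the asymptotics $\prod_{g\le e}(1-\delta|g|^{-1})^\gamma\asymp |e|^{-\gamma\delta}$ and invoke Proposition~\ref{propperc}; for item (2) you compose with an extra independent $(1-p|e|^{-1})$-thinning, note the composite has marginals $\ge 1-(\delta+p)|e|^{-1}$ and is still quasi-independent, apply item (1), and then use the subcritical half (Corollary~\ref{coroperc}) on the random tree $\mathcal{T}_\delta$ to force $br_r(\mathcal{T}_\delta)\ge p$ on a positive-probability event. This is exactly the paper's proof, and you are right that it actually yields $br_r(\mathcal{T}_\delta)\ge b-\delta$; the $2\delta$ in the statement is slack.

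There is, however, a genuine gap in your item (1). You propose to replace $\psi$ by $\widetilde\psi$ with $\widetilde\psi(e)=1-\delta|e|^{-1}$ for $|e|>n_0$ via a monotone coupling, and then claim the resulting percolation is still quasi-independent ``with the same constant $C_Q$''. This is not justified: quasi-independence is a constraint on the \emph{joint} law, and a monotone coupling that matches prescribed smaller marginals does not in general preserve the two-point conditional inequality. For a non-product law there is no canonical way to ``lower marginals'' while controlling correlations, so the parenthetical ``deserves a line of care'' hides a real problem.

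The fix is to drop the coupling entirely. Since $\psi(e)\ge\widetilde\psi(e)$ pointwise, one has $\prod_{g\le e}\psi(g)^\gamma\ge \prod_{g\le e}\widetilde\psi(g)^\gamma$ and hence $RT(\mathcal{T},\psi)\ge RT(\mathcal{T},\widetilde\psi)$. Your computation shows $RT(\mathcal{T},\widetilde\psi)>1$ (handling the shallow factors $|g|\le n_0$ by the positive constant $\min_{|g|\le n_0}\psi(g)>0$, using local finiteness), so $RT(\mathcal{T},\psi)>1$, and Proposition~\ref{propperc} applies directly to the \emph{original} percolation, whose quasi-independence is given by hypothesis. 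This is what the paper does.
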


\begin{proof} 
First we prove $(1)$.
{For $\pi\in \Pi$, we define} $|\pi|=\min\{|e|; e\in \pi\}$. {Notice}  that, for any $\gamma>1$, as $\psi(e)>0$ for every $e\in E$,
\begin{equation}\label{lonelyview}
\inf_{\pi\in\Pi:|\pi|\le n_0}\sum_{e\in\pi} \prod_{g\le e}\left(\psi(g) \right)^\gamma>0.
\end{equation}

If $\delta<b$, then for any $\gamma\in(1,b/\delta)$, we have 
\begin{equation}\label{lonelyview2}
\begin{split}
\inf_{\pi\in\Pi:|\pi|>n_0}\sum_{e\in\pi} \prod_{g\le e}\left(\psi(g) \right)^\gamma
&\ge \inf_{\pi\in\Pi:|\pi|>n_0}\sum_{e\in\pi} \prod_{g\le e} \left(1-\delta |g|^{-1}\right)^{\gamma}\\
&\ge c\inf_{\pi\in\Pi}\sum_{e\in\pi}  \exp\left(-\gamma\delta\sum_{i=1}^{|e|} i^{-1}\right)\\
&\ge 2^{-b}c \inf_{\pi\in\Pi}\sum_{e\in\pi} |e|^{-\gamma\delta}>0,
\end{split}
\end{equation}
where $c$ is some positive constant. Putting \eqref{lonelyview} and \eqref{lonelyview2} together, we have that $RT(\mathcal{T},\psi)>1$. By Proposition \ref{propperc}, as the percolation is quasi-independent, the cluster $\mathcal{T}_\delta$ is infinite with positive probability.\\
Next, we turn to the proof of $(2)$.
Consider the previous percolation, with $\delta<b$ and  fix $p<b-\delta$.\\
On the event $\{\mathcal{T}_\delta $ is infinite$\}$, which has positive probability, we perform an independent percolation  on $\mathcal{T}_\delta$ for which an edge $e$ stays open with probability $(1-p|e|^{-1})$. We proved that if  $p<br_r(\mathcal{T}_\delta)$ then the percolation is supercritical and if $p>br_r(\mathcal{T}_\delta)$ then it  is subcritical. We denote $\mathcal{T}_{\delta+p}\rq$ the resulting cluster of the root.\\
On the other hand, performing this percolation on $\mathcal{T}_\delta$ is equivalent to performing a quasi-independent percolation on the whole tree $\mathcal{T}$ where an edge $e$ stays open with probability $\psi(e)(1-p|e|^{-1})$.
As $\psi(e)(1-p|e|^{-1})\ge (1-\delta |e|^{-1})(1-p|e|^{-1})\ge 1-(\delta+p)|e|^{-1}$, for $|e|>n_0$, if $p+\delta<b$, this percolation is supercritical, i.e.~$\mathcal{T}_{p+\delta}\rq$ is infinite with positive probability.\\
This implies that, on the event $\{\mathcal{T}_\delta $ is infinite$\}$, the cluster $\mathcal{T}_{\delta+p}\rq$ is infinite with positive probability. Therefore, by Corollary~\ref{coroperc}, $br_r(\mathcal{T}_\delta)\ge p$ with positive probability. As this holds for any $p<b-\delta$, we obtain the conclusion.
\end{proof}

%
%

\section{Proof {of Proposition~\ref{mainth0} and} Theorem  \ref{mainth}}\label{sect-main}

First, note that  Theorem  \ref{mainth} is a straightforward consequence of Proposition \ref{mainth0} and Proposition~\ref{prop:likeCKS}. Therefore, it remains to prove  Proposition \ref{mainth0}.

\subsection{Transience: proof of the first item of Proposition \ref{mainth0}} \label{sec:transience}
In this section,  we will prove that $RT(\mathcal{T},\psi_{RC})>1$, where we recall that this quantity is defined in \eqref{equ:RT} and $\psi_{RC}$ is defined in \eqref{equ:definitionofpsiRC}.\\
In particular, we can rewrite
\begin{align}\label{defBR}
RT(\mathcal{T},\psi_{RC}) = \sup\left\{\lambda>0: \inf_{\pi \in \Pi } \sum_{e \in \pi} \left(\frac{1}{\sum_{i \le e} C^{-1}_i}\right)^{\lambda}>0\right\}. 
\end{align}
Besides, notice that $\psi(e)$ represents the probability that a one-dimensional random walk on the conductances $(C_e)_{e\in E}$, restricted to the ray connecting $\r$ to $e^+$ and  started at  $e^-$, hits $e^+$  before {returning to}  $\r$. 

\begin{proposition}\label{conductbound}  For any $p \in \N$, and for {any $\tau>0$}, there exists a positive finite constant $K_{p,\tau}$ such that
\begin{equation}\label{cond1}
{\bf E}\left[\Big(\sum_{i=1}^n C^{-1}_i\Big)^p\;\Big{|}\; \bigcap_{i=1}^n \{C^{-1}_i \le  i^{\frac{1+\tau}{m}}\}\right] \le K_{p,\tau} n^{p (1 \vee\frac {(1 + \tau)^2}m) } , \qquad \mbox{for all } n \in \N.
\end{equation} 
\end{proposition}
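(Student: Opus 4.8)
The plan is to prove \eqref{cond1} by conditioning on the truncation event, using the slowly-varying tail assumption \eqref{distr-cond} to control the conditional moments of each individual $C_i^{-1}$, and then summing. First I would expand the $p$-th power of the sum: write
\[
\Big(\sum_{i=1}^n C_i^{-1}\Big)^p = \sum_{i_1,\dots,i_p=1}^n C_{i_1}^{-1}\cdots C_{i_p}^{-1},
\]
so that the conditional expectation decomposes into $n^p$ terms, each of the form ${\bf E}[\prod_{j=1}^p C_{i_j}^{-1} \mid \bigcap_{i=1}^n \{C_i^{-1}\le i^{(1+\tau)/m}\}]$. Since the $C_i$ are i.i.d.\ and the conditioning event is a product event over the coordinates, this conditional expectation factorizes over the distinct indices appearing among $i_1,\dots,i_p$; moreover for a single index $i$ the conditioning is just $\{C_i^{-1}\le i^{(1+\tau)/m}\}$, which has probability bounded below (it tends to $1$), so conditioning inflates the moment by at most a bounded factor. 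Thus it suffices to get a good bound on ${\bf E}[(C_i^{-1})^p \mid C_i^{-1}\le i^{(1+\tau)/m}]$ for each $i$ — by Hölder/Jensen on the distinct-index factorization, bounding the worst single factor $(C_{i_j}^{-1})^p$ on its truncation event dominates the mixed products.

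Next I would estimate the truncated moment ${\bf E}[(C^{-1})^p \1_{\{C^{-1}\le t\}}]$ for $t = i^{(1+\tau)/m}$. Using \eqref{distr-cond}, which says ${\bf P}(C^{-1}\ge s) = {\bf P}(C\le 1/s) = L(s)/s^m$, integrate by parts (layer-cake):
\[
{\bf E}\big[(C^{-1})^p\1_{\{C^{-1}\le t\}}\big] = \int_0^{t} p s^{p-1}\,{\bf P}(s\le C^{-1}\le t)\,ds \le p\int_0^{t} s^{p-1}\,{\bf P}(C^{-1}\ge s)\,ds \lesssim \int_1^{t} s^{p-1-m}L(s)\,ds + O(1).
\]
Here the key point is the comparison between $p$ and $m$. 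If $p > m$, the integral is dominated by its upper endpoint and, by Karamata's theorem on integrals of regularly-varying functions, behaves like $t^{p-m}L(t)$ up to constants; since $L$ is slowly varying, for any fixed $\tau$ this is at most $C\, t^{p-m+\tau'}$ for a small $\tau'$, i.e.\ at most $C\, i^{(1+\tau)(p-m+\tau')/m}$. Choosing bookkeeping carefully this gives an exponent that, summed back, produces $n^{p\cdot (1+\tau)^2/m}$ (the $(1+\tau)^2$ absorbing both the truncation level and the slow variation). If $p \le m$ the integral converges as $t\to\infty$ (or grows only like a slowly varying function when $p=m$), so the single-factor moment is $O(1)$ up to slowly varying corrections, and summing the $n^p$ terms gives the $n^p$ bound — this is the "$1\vee$" branch. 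Combining, each of the $n^p$ product terms is bounded by $K\, n^{p(1\vee (1+\tau)^2/m)}$ up to slowly-varying factors, and since a slowly varying function is eventually smaller than any power, the slowly varying factors can be absorbed into the constant $K_{p,\tau}$ at the cost of enlarging $\tau$ slightly (or one states the bound with the constant depending on $\tau$, as in the statement). Summing $n^p$ such terms gives $n^p \cdot K n^{p(1\vee(1+\tau)^2/m)}$; wait — I must be more careful: the $n^p$ terms in the expansion must be reorganized so that the total is $n^{p(1\vee(1+\tau)^2/m)}$, not $n^{p+\dots}$. The correct bookkeeping is: in the multi-index sum, group terms by which indices coincide; the dominant contribution is from all indices distinct, giving $\sum_{i_1<\dots<i_p}\prod_j {\bf E}[(C_{i_j}^{-1}) \mid \cdot] \le \big(\sum_{i=1}^n {\bf E}[C_i^{-1}\mid\cdot]\big)^p$, and $\sum_{i=1}^n {\bf E}[C_i^{-1}\mid C_i^{-1}\le i^{(1+\tau)/m}] \lesssim \sum_{i=1}^n i^{(1+\tau)^2/m - 1}\vee \sum_{i=1}^n O(1) \lesssim n^{(1+\tau)^2/m}\vee n$ by the single-factor estimate with $p=1$; raising to the $p$-th power yields exactly the claimed $n^{p(1\vee (1+\tau)^2/m)}$.

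The main obstacle I anticipate is the careful handling of the slowly varying function $L$ together with the truncation level $i^{(1+\tau)/m}$: one needs a uniform (in $i$ and $n$) version of Karamata's estimate, and one must verify that the product of "slow corrections" accumulated over $i=1,\dots,n$ does not spoil the power bound — this is precisely why the statement builds in an arbitrary $\tau>0$ and lets the constant $K_{p,\tau}$ blow up as $\tau\to0$, giving room to swallow $L$ into a power $i^{\tau/m}$ via Potter's bounds. A secondary technical point is justifying the factorization of the conditional expectation: because the conditioning event $\bigcap_{i=1}^n\{C_i^{-1}\le i^{(1+\tau)/m}\}$ is a product event and the variables are independent, the conditional law is still a product law (each $C_i$ independently conditioned on its own truncation), so the multi-index expectation genuinely factorizes over distinct indices and each marginal conditioning only multiplies by $1/{\bf P}(C_i^{-1}\le i^{(1+\tau)/m})$, a factor converging to $1$ and hence bounded uniformly in $i$. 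With these two points in hand the estimate \eqref{cond1} follows.
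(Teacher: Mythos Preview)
Your approach is essentially correct and takes a genuinely different route from the paper's. Both begin with the same single-variable estimate ${\bf E}\big[(C_i^{-1})^a\mid C_i^{-1}\le i^{(1+\tau)/m}\big]\le \widetilde K_a\,(i^{a\eta-1}\vee1)$ with $\eta:=(1+\tau)^2/m$, obtained exactly as you describe via the layer-cake formula and Potter-type bounds on $L$. From there the paper does \emph{not} expand the $p$-th power over multi-indices; instead it runs a double induction: the base case $p=1$ is the sum $\sum_i{\bf E}[C_i^{-1}\mid\cdot]\lesssim n^{\eta\vee1}$ (your computation too), and the induction on $p$ proceeds through an inner induction on $n$, writing $\big(\sum_{i\le N+1}C_i^{-1}\big)^\beta=\big(\sum_{i\le N}C_i^{-1}\big)^\beta+\sum_{j=0}^{\beta-1}\binom{\beta}{j}\big(\sum_{i\le N}C_i^{-1}\big)^j C_{N+1}^{-(\beta-j)}$ and applying the induction hypothesis to each lower-$j$ term together with the single-variable bound on $C_{N+1}^{-(\beta-j)}$. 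Your multi-index expansion is more direct and dispenses with the nested induction; the paper's argument is more elementary but heavier in bookkeeping.

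There is one genuine gap in your plan: you treat only the all-distinct multi-indices (correctly bounded by $\big(\sum_i{\bf E}[C_i^{-1}\mid\cdot]\big)^p$) and merely assert that repetitions are dominated. Your earlier ``H\"older/Jensen'' remark does not achieve this---AM--GM on each multi-index term gives at best $n^p\max_i{\bf E}[(C_i^{-1})^p\mid\cdot]\lesssim n^{p+p\eta-1}$, which is too large when $p>1$. The fix requires the single-variable bound for \emph{every} exponent $1\le a\le p$: a multi-index with distinct values $j_1,\dots,j_r$ and multiplicities $a_1,\dots,a_r$ contributes $\prod_s{\bf E}\big[(C_{j_s}^{-1})^{a_s}\mid\cdot\big]$, and summing over the $j_s$ gives $\lesssim\prod_s n^{a_s\eta\vee1}$; one then checks $\sum_s(a_s\eta\vee1)\le p(\eta\vee1)$ separately for $\eta\ge1$ and $\eta<1$, and the number of partition types of $p$ is bounded independently of $n$. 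Alternatively, Rosenthal's inequality for independent non-negative summands closes the gap in one line.
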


\begin{proof}
{Recall that for any non-negative random variable $Z$ we have, for $a >1$, 
\[
{\bf E}[Z^a] = \int_0^\infty a u^{a-1} \P(Z \ge u) \d u.
\]
For any $b>0$ we have that any slowly varying function $L(u)$ is $o(u^{b})$, as $u \to \infty$. Hence, for any $\tau>0$, there exists a constant $K_\tau,i_0>0$ depending only on $L$ and $\tau$, such that, for $i\ge i_0$,
\begin{equation}\label{cond2}
\begin{aligned}
{\bf E}[C_i^{-a}\;|\; C_i^{-1} \le i^{\frac{1+\tau}{m}}] &\le\left(1+ \int_{1}^{ i^{\frac{1+\tau}{m}}} a u^{a-1} \frac{L(u)}{u^m}\d u \right)\left( \frac 1{1- i^{-({1+\tau})} L( i^{\frac{1+\tau}{m}})}\right)\\
&\le 2\left(1+\frac{K_\tau}{a-m} {i^{a(1+ \tau)^2/m -1 } -1}\right)\\
&\Def b^{\ssup {a, \tau}}_i.
\end{aligned}
\end{equation}
For simplicity we drop $\tau$ from the notation, and use $(b^{\ssup a}_i)_i$.   Notice that the sequence $(b^{\ssup a}_i)_i$, when $a \ge 1$,  is  $O(i^{\frac {a(1+\tau)^2}m -1}\vee 1)$, that is there exists $\widetilde{K}_a>0$ depending only on $L$, $a$ and $\tau$ such that 
$$b^{\ssup a}_i \le \widetilde{K}_a \Big(i^{\frac {a(1+ \tau)^2}m -1} \vee 1\Big),$$ 
for all $i \in \N$.  In order to prove the proposition,  we proceed by double induction.    First we prove that \eqref{cond1} holds for $p=1$ and all $n \in \N$. In fact, for $ m >0$, we have
\begin{equation}\label{cond1.5}
{\bf E}\left[\Big(\sum_{i=1}^n C^{-1}_i\Big)\;\Big{|}\; \bigcap_{i=1}^n \{C^{-1}_i \le i^{\frac{1+\tau}m}\}\right]   \le \sum_{i =1} ^n \widetilde{K}_1 (i^{\frac {(1+\tau)^2}m -1}\vee 1)= O(n^{(\frac {(1+ \tau)^2}{m}\vee 1)}).
\end{equation}
{Note that, in the previous inequality, we use that ${\bf P}[C_e\ge1]>0$ for any $e\in E$, so that the conditional probability on the left-hand side is well-defined.}\\
Assume that  \eqref{cond1} holds for all $p \le \beta -1$ and for all $n \in \N$.
Notice that \eqref{cond1}  is trivially true for $n=1$ and $ p = \beta$. Suppose it is true for all $n \le N$ and for $p = \beta$.   To simplify the notation, set $\eta = \frac{(1+\tau)^2}m \vee 1$. Next we prove the result for $N+1$.   We can suppose that $K_\beta$ is larger than 
\begin{equation}\label{cond1.6} \beta \max_{0 \le j \le \beta-1} {\beta \choose j} K_j \widetilde{K}_{\beta - j},
\end{equation}
where $K_0 = 1$.  We have 
\begin{equation}\label{cond2.1}
\begin{aligned}
&{\bf E}\left[\Big(\sum_{i=1}^{N+1} C^{-1}_i\Big)^\beta\;\Big{|}\; \bigcap_{i=1}^{N+1}\big \{C^{-1}_i \le i^{\frac{1+\tau}{m}}\big\}\right] \\
&={\bf E}\left[\Big(\sum_{i=1}^{N} C^{-1}_i\Big)^\beta + C^{-\beta}_{N+1}  +\sum_{j=1}^{\beta -1} {\beta \choose j} \Big(\sum_{i=1}^{N} C^{-1}_i\Big)^j C^{-\beta +j}_{N+1} \;\Big{|}\; \bigcap_{i=1}^{N+1} \{C^{-1}_i \le i^{\frac{1+\tau}{m}}\}\right]\\
&\le K_\beta N^{\beta \eta} + b^{\ssup \beta}_{N+1}  + \sum_{j=1}^{\beta -1} {\beta \choose j}\bE\left[\Big(\sum_{i=1}^{N} C^{-1}_i\Big)^j\;\Big{|}\; \bigcap_{i=1}^{N+1} \{C^{-1}_i \le i^{\frac{1+\tau}{m}}\}\right] b_{N+1}^{\ssup{\beta-j}}\\
&\le K_\beta N^{\beta\eta}  + \widetilde{K}_\beta \Big((N+1)^{\frac {\beta(1 + \tau)^2} m  -1}  \vee 1\Big)  + \sum_{j=1}^{\beta -1} {\beta \choose j} K_{j} N^{j\eta} \widetilde{K}_{\beta - j}  \Big((N+1)^{\frac{(\beta -j)(1+\tau)^2}m -1}  \vee 1\Big).
\end{aligned}
\end{equation}}

{In the step before the last one, we used independence between $C_{N+1}$ and $(C_i)_{ i \le N}.$   As we can choose $K_\beta$ to be larger than \eqref{cond1.6},  we have
\begin{equation}\label{cond3}
{\bf E}\left[\Big(\sum_{i=1}^{N+1} C^{-1}_i\Big)^\beta\;\Big{|}\; \bigcap_{i=1}^{N+1} \{C^{-1}_i \le i^{\frac{1+\tau}{m}}\}\right] \le K_\beta \left( N^{\beta \eta} +  (N+1)^{\beta \eta -1}\right).
 \end{equation}
It remains to prove that the right-hand side of \eqref{cond3} is less than $K_\beta (N+1)^{\beta \eta}.$ 
Notice that the right-hand side   of \eqref{cond3} equals 
$$ (N+1)^{\beta \eta}  K_\beta \left( \Big(1 - \frac 1{N+1}\Big)^{\beta \eta} + \frac 1{ N+1}\right)\le K_\beta (N+1)^{\beta \eta},
$$
where we used $(1- x)^a \le 1 - x$ for all $x \in (0,1)$ and $a >1$.}
\end{proof}

\begin{corollary}\label{coropercobound}
For any $\eps\in(0,1)$, any $t>0$, there exist $C_{\eps,t}>0$ such that, for any $e\in E$, we have that 
\[
{\bf P}\left(\left.\sum_{g\le e} C^{-1}_g>|e|^{\left(1\vee \frac{1}{m}\right)+\frac{m+3}{m}\eps}\right|\bigcap_{g\le e}\left\{ C^{-1}_g \le|g|^{\frac{1+\eps}{m}}\right\}\right)\le C_{\eps,t} |e|^{-t}.
\]
\end{corollary}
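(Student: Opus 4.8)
The plan is to deduce Corollary~\ref{coropercobound} from Proposition~\ref{conductbound} by a straightforward application of Markov's inequality at a large power, after matching up the exponents. Write $S_e = \sum_{g\le e}C_g^{-1}$, let $n=|e|$, and let $A_n = \bigcap_{g\le e}\{C_g^{-1}\le |g|^{(1+\eps)/m}\}$ be the conditioning event. Fix $\eps\in(0,1)$ and $t>0$. For an integer $p\in\N$ to be chosen, Markov's inequality applied to the $p$-th moment gives
\[
{\bf P}\left(\left.S_e > n^{\alpha}\right|A_n\right)\le n^{-\alpha p}\,{\bf E}\!\left[S_e^{\,p}\mid A_n\right],
\]
where $\alpha = (1\vee \frac1m)+\frac{m+3}{m}\eps$. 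Now I would invoke Proposition~\ref{conductbound} with $\tau=\eps$: it yields ${\bf E}[S_e^{\,p}\mid A_n]\le K_{p,\eps}\, n^{p(1\vee\frac{(1+\eps)^2}{m})}$ (note the conditioning events match exactly, with $\tau=\eps$). Hence
\[
{\bf P}\left(\left.S_e > n^{\alpha}\right|A_n\right)\le K_{p,\eps}\, n^{-p\left(\alpha - (1\vee\frac{(1+\eps)^2}{m})\right)}.
\]

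The crux is therefore to check that the exponent gap $\delta(\eps):=\alpha - \bigl(1\vee\frac{(1+\eps)^2}{m}\bigr)$ is strictly positive, so that raising $p$ makes the bound decay as fast as we like. I would estimate $\bigl(1\vee\frac{(1+\eps)^2}{m}\bigr)\le \bigl(1\vee\frac1m\bigr) + \frac{2\eps+\eps^2}{m}\le \bigl(1\vee\frac1m\bigr)+\frac{3\eps}{m}$, using $\eps<1$ so that $\eps^2<\eps$ and hence $2\eps+\eps^2<3\eps$. Comparing with $\alpha = (1\vee\frac1m)+\frac{m+3}{m}\eps$, we get
\[
\delta(\eps)\ \ge\ \frac{m+3}{m}\eps - \frac{3}{m}\eps\ =\ \frac{m}{m}\eps\ =\ \eps\ >\ 0.
\]
(The slack of exactly $\eps$ is presumably why the slightly awkward exponent $\frac{m+3}{m}\eps$ was chosen in the statement.) So $\delta(\eps)\ge\eps$ uniformly in $n$.

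Finally I would choose $p = p(\eps,t)\in\N$ with $p\ge t/\eps$, so that $p\,\delta(\eps)\ge p\eps\ge t$, and set $C_{\eps,t} = K_{p(\eps,t),\eps}$, which is finite by Proposition~\ref{conductbound}. This gives ${\bf P}(S_e> n^{\alpha}\mid A_n)\le C_{\eps,t}\, n^{-p\delta(\eps)}\le C_{\eps,t}\, |e|^{-t}$, as claimed. The only mild subtlety to mention is that the conditional expectation and conditional probability are well-defined because ${\bf P}(C_e\ge 1)>0$ (as already noted after \eqref{cond1.5}), so each event $\{C_g^{-1}\le |g|^{(1+\eps)/m}\}$ has positive probability; and that everything is uniform in the edge $e$ because the bound depends on $e$ only through $|e|=n$, the conductances being i.i.d. There is no real obstacle here — the proof is essentially bookkeeping of exponents plus one application of each of Markov's inequality and Proposition~\ref{conductbound} — the one place to be careful is the elementary inequality $(1+\eps)^2/m \le 1/m + 3\eps/m$ for $\eps\in(0,1)$ and, more generally, keeping track of the max with $1$ correctly when $m\ge 1$ versus $m<1$.
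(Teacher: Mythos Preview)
Your proof is correct and follows essentially the same approach as the paper: Markov's inequality at the $p$-th moment combined with Proposition~\ref{conductbound} (with $\tau=\eps$), then the elementary check that $\bigl(1\vee\frac{(1+\eps)^2}{m}\bigr)\le \bigl(1\vee\frac1m\bigr)+\frac{3\eps}{m}$, and finally the choice $p=\lceil t/\eps\rceil$. In fact your write-up verifies the exponent inequality more carefully than the paper, which simply states it.
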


\begin{proof}
Using Proposition \ref{conductbound} and Markov's inequality gives that, for any $p\in\mathbb{N}$,
\begin{equation}
\begin{split}
{\bf P}\left(\left.\sum_{g\le e} C^{-1}_g>|e|^{\left(1\vee \frac{(1+\eps)^2}{m}\right)+\eps}\right|\bigcap_{g\le e}\left\{ C^{-1}_g \le|g|^{\frac{1+\eps}{m}}\right\}\right)\le K_{p,\eps} |e|^{-p\eps}.
\end{split}
\end{equation}
This gives the conclusion by choosing $p=\lceil t/\eps\rceil$ and by noting that $\left(1\vee \frac{(1+\eps)^2}{m}\right)+\eps\le \left(1\vee \frac{1}{m}\right)+\frac{m+3}{m}\eps$
\end{proof}

Next, we will define a quasi-independent percolation on the tree $\mathcal{T}$.
Let us fix $\eps\in(0,1\wedge b)$ small enough, such that the following conditions are satisfied
\begin{align}\label{epsm} 
(1+\eps)\frac{1+(m+3)\eps}{m}\le b-2\eps &\qquad \text{ if }bm>1,\\ \label{epsm2}
(1+4\eps)(1+\eps)\le b-2\eps &\qquad \text{ if } b>1.
\end{align}
Let us define the percolation such that, for $e\in E$ with $|e|=1$, $e$ is open almost surely and if $|e|>1$ then
\begin{equation}\label{defpercocond}
\left\{e\text{ is open}\right\}\Def \left\{C^{-1}_e \le |e|^{\frac{1+\eps}{m} } \right\}\cap\left\{\sum_{g\le e} C^{-1}_g\le|e|^{\left(1\vee \frac{1}{m}\right)+\frac{m+3}{m}\eps} \right\}.
\end{equation}
We will denote  by $\mathcal{T}_C$ the cluster of open edges containing the root.
Let us define the function $\psi_{C}$ on edges such that $\psi_C(e)=1$ if $|e|=1$ and, if $|e|>1$ and $e_0$ is the parent of $e$,  {that is the unique edge such that  $e_0^+ = e^-$},  then 
\begin{equation}
\psi_C(e)\Def{\bf P}\left(\left.e\in\mathcal{T}_C\right|e_0\in\mathcal{T}_C\right).
\end{equation}

\begin{proposition}\label{boundbrr}
The percolation defined by \eqref{defpercocond} is quasi-independent. Moreover, $RT(\mathcal{T},\psi_C)>1$ and, with positive ${\bf P}$-probability $br_r(\mathcal{T}_C)\ge b-\eps$.
\end{proposition}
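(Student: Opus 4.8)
The plan is to establish the three assertions of Proposition~\ref{boundbrr} in turn, leaning on the structural results already proved: Corollary~\ref{coropercobound} for tail control of the partial sums of conductances, and Proposition~\ref{siham} (together with Corollary~\ref{coroperc}) for the passage between a lower bound $\psi_C(e)\ge 1-\delta|e|^{-1}$ and a lower bound on the branching-ruin number of the cluster.

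First I would prove \textbf{quasi-independence}. Fix two edges $e_1,e_2$ with common ancestor $g_0=e_1\wedge e_2$, and let $f$ be the last edge on $[\r,g_0]$. The event $\{e\in\mathcal{T}_C\}$ for $e$ below $g_0$ is, by definition, measurable with respect to the conductances $(C_g)_{g\le e}$; it decomposes into the ``per-edge'' small-conductance constraints $\{C_g^{-1}\le|g|^{(1+\eps)/m}\}$ for $g\le e$, which factorize over the two branches given their common part, and the ``partial-sum'' constraint $\{\sum_{g\le e}C_g^{-1}\le |e|^{(1\vee 1/m)+\frac{m+3}m\eps}\}$. Conditioning on $f\in\mathcal{T}_C$ fixes the relevant information on $[\r,g_0]$; the two branches then use disjoint, hence independent, families of conductances. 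The only source of positive correlation is that both partial-sum events share the contribution $\sum_{g\le f}C_g^{-1}$. I would absorb this by noting that on $\{f\in\mathcal{T}_C\}$ this shared sum is at most $|f|^{(1\vee1/m)+\frac{m+3}m\eps}$, so dropping it from the constraint only enlarges each event by a bounded multiplicative factor; more precisely, $\bP(e_1,e_2\in\mathcal{T}_C\mid g_0\in\mathcal{T}_C)$ is bounded above by the product of the two branch probabilities computed as if the partial sums started afresh at $g_0$, and each of those differs from the true $\bP(e_i\in\mathcal{T}_C\mid g_0\in\mathcal{T}_C)$ by a factor bounded by $C_Q^{1/2}$ uniformly, using Corollary~\ref{coropercobound} to see the partial-sum constraint is satisfied with probability bounded away from $0$ given the per-edge constraints. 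This yields \eqref{qindep0} with some finite $C_Q$.

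Second, the \textbf{lower bound on $\psi_C$}. For $|e|>1$ with parent $e_0$, write $\psi_C(e)=\bP(e\in\mathcal{T}_C\mid e_0\in\mathcal{T}_C)\ge 1-\bP(C_e^{-1}>|e|^{(1+\eps)/m}\mid e_0\in\mathcal T_C)-\bP(\text{partial-sum constraint fails}\mid e_0\in\mathcal T_C, C_e^{-1}\le|e|^{(1+\eps)/m})$. The first term is $L(|e|^{(1+\eps)/m})/|e|^{1+\eps}$ by \eqref{distr-cond}, which is $O(|e|^{-1-\eps/2})$; the second is $O(|e|^{-t})$ for any $t$ by Corollary~\ref{coropercobound} (the events on the parent are on disjoint conductances, so conditioning only costs the bounded factor $(1-\bP(\text{per-edge fails}))^{-1}$). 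Hence for $|e|$ large, say $|e|>n_0$, we get $\psi_C(e)\ge 1-\delta|e|^{-1}$ for any prescribed $\delta>0$; choosing $\delta\in(0,\eps)$ small, Proposition~\ref{siham}(1) gives $RT(\mathcal{T},\psi_C)>1$ (note $\delta<\eps<b$), and Proposition~\ref{siham}(2) gives that with positive probability $br_r(\mathcal{T}_C)\ge b-2\delta$. Since $\delta$ can be taken arbitrarily small, in particular $\le\eps/2$, we conclude $br_r(\mathcal{T}_C)\ge b-\eps$ with positive probability, which is the claim.

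The \textbf{main obstacle} is the quasi-independence verification: the partial-sum events $\{\sum_{g\le e}C_g^{-1}\le\cdots\}$ genuinely couple the two branches through their shared prefix, and one must argue carefully that conditioning on $g_0\in\mathcal{T}_C$ (rather than on the exact value of $\sum_{g\le g_0}C_g^{-1}$) is enough to decouple them up to a constant. The key quantitative input making this work is Corollary~\ref{coropercobound}, which says the partial-sum constraint is ``almost automatic'' given the per-edge constraints — failing with probability $O(|e|^{-t})$ — so that the correlated part of the event carries negligible weight and the bounded factor $C_Q$ can be extracted. Everything else (the two tail estimates and the appeal to Proposition~\ref{siham}) is routine given the machinery already in place.
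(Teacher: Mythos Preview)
Your outline follows the paper's strategy closely: both arguments reduce quasi-independence and the bound $\psi_C(e)\ge 1-O(|e|^{-1})$ to the fact that the partial-sum constraints are ``almost automatic'' given the per-edge constraints, and then invoke Proposition~\ref{siham}. However, there is a genuine gap at the heart of both of your steps. The crucial technical input, which the paper isolates and proves as a separate claim, is a \emph{uniform} lower bound
\[
\bP\Bigl(e\in\mathcal{T}_C\ \Big|\ \bigcap_{g\le e}\{C_g^{-1}\le|g|^{(1+\eps)/m}\}\Bigr)\ \ge\ p_0\ >\ 0
\]
valid for all $e\in E$. Corollary~\ref{coropercobound} controls the failure probability of the partial-sum constraint at a \emph{single} level $|e|$; it does not by itself give a uniform positive lower bound for the intersection of all such constraints along $[\r,e^+]$ (a naive union bound $\sum_{j\ge1}C_{\eps,t}j^{-t}$ is finite but need not be $<1$, and the events are not independent). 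The paper obtains $p_0$ via a Borel--Cantelli argument together with the observation that, since $\bP(C_e\ge1)>0$, one can force $C_j\ge1$ for the first $k$ indices and thereby make the first $k$ partial-sum constraints hold deterministically.

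Your justifications fail at exactly this point. In the $\psi_C$ step, the parenthetical ``the events on the parent are on disjoint conductances'' is false: the partial-sum event at $e$ depends on all $C_g$ with $g\le e$, which are precisely the variables determining $\{e_0\in\mathcal{T}_C\}$. To pass from conditioning on $\{e_0\in\mathcal{T}_C\}$ to conditioning on $\bigcap_{g\le e}\{C_g^{-1}\le|g|^{(1+\eps)/m}\}$ (so that Corollary~\ref{coropercobound} applies) you need $p_0$ in the denominator. Likewise, in the quasi-independence step, comparing the ``fresh branch'' probabilities to the true conditionals $\bP(e_i\in\mathcal{T}_C\mid g_0\in\mathcal{T}_C)$ again requires this uniform $p_0$; your phrase ``satisfied with probability bounded away from $0$ given the per-edge constraints'' is the right statement but is not a consequence of Corollary~\ref{coropercobound} alone. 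Once $p_0$ is in hand, the rest of your sketch goes through and coincides with the paper's proof (with constant $C_Q=p_0^{-3}$).
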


\begin{proof}
Let us prove that there exists a constant $p_0>0$ such that, for any $e\in E$,
\begin{equation}\begin{split}\label{previousbound}
{\bf P}\left(e\in\mathcal{T}_C\Big|\bigcap_{g\le e} \left\{C^{-1}_g \le |g|^{\frac{1+\eps}{m} } \right\}\right)&={\bf P}\left(\left.\bigcap_{g\le e}\left\{g\in\mathcal{T}_C\right\}\right|\bigcap_{g\le e} \left\{C^{-1}_g \le |g|^{\frac{1+\eps}{m} } \right\}\right)\\
&\ge p_0.
\end{split}
\end{equation}
{Indeed, the conditioning in the above expression is equivalent to picking a sequence of independent conductances $(C_j)_{j\ge1}$ under a measure $\widetilde{\bf P}$ such that $C_j$ is picked under the conditioned law ${\bf P}(\cdot|C_j^{-1} \le j^{\frac{1+\eps}{m}})$, and looking at the events corresponding to the second event on the right hand side of \eqref{defpercocond}, that is
\[
A_j=\left\{\sum_{i\le j} C^{-1}_i\le j^{\left(1\vee \frac{1}{m}\right)+\frac{m+3}{m}\eps}\right\}.
\]
By Corollary \ref{coropercobound} (applied with $t=2$ for instance)  and Borel-Cantelli Lemma, there exists $k\in\mathbb{N}$ (deterministic) such that $\widetilde{\bf P}\left(\cap_{n\ge k} A_n\right)>0$. Now, if one replaces $C_j$ by $\tilde{C}_j=\max(C_j,1)$ for $1\le j \le k$, and let $\tilde{A}_n$ be the the same event as $A_n$ but where $C_j$ is replaced by $\tilde{C}_j$, then $\tilde{A}_1,\dots,\tilde{A}_k$ always happen and $\widetilde{\bf P}\left(\cap_{n\ge 1} \tilde{A}_n\right)\ge \widetilde{\bf P}\left(\cap_{n\ge k} A_n\right)>0$. Finally, we can choose
\[
p_0=\widetilde{\bf P}\left(\cap_{n\ge 1} A_n\right)=\widetilde{\bf P}\left(\cap_{n\ge 1} \tilde{A}_n\right)\times \widetilde{\bf P}\left(\cap_{1\le j\le k} \left\{C_j\ge 1\right\}\right)>0,
\]
which proves the claim \eqref{previousbound}.
 }

Let us prove that the percolation is quasi-independent. Let $e_1,e_2\in E$ and let $e$ be their common ancestor with highest generation. We have that
\begin{equation}
\begin{split}
&{\bf P}\left(e_1,e_2\in\mathcal{T}_C\Big|e\in\mathcal{T}_C\right)=\frac{\bP\left(e_1,e_2\in\mathcal{T}_C\right)}{\bP\left(e\in\mathcal{T}_C\right)}\\
=&\prod_{e<g \le e_1\text{ or }e<g \le e_2}\bP\left( C^{-1}_g \le |g|^{\frac{1+\eps}{m} } \right)\frac{\bP\left(e_1,e_2\in\mathcal{T}_C\Big|\bigcap_{g\le e_1,e_2} \left\{C^{-1}_g \le |g|^{\frac{1+\eps}{m} } \right\}\right)}{\bP\left(e\in\mathcal{T}_C\Big|\bigcap_{g\le e} \left\{C^{-1}_g \le |g|^{\frac{1+\eps}{m} } \right\}\right)}\\
\le&\frac{1}{p_0}\times\prod_{e<g \le e_1\text{ or }e<g \le e_2}\bP\left(C^{-1}_g \le |g|^{\frac{1+\eps}{m} } \right)\\
{=}&\frac{1}{p_0}\times\frac{\prod_{g \le e_1}\bP\left(C^{-1}_g \le |g|^{\frac{1+\eps}{m} }\right)}{\prod_{g \le e}\bP\left(C^{-1}_g \le |g|^{\frac{1+\eps}{m} }\right)}\times\frac{\prod_{g \le e_2}\bP\left(C^{-1}_g \le |g|^{\frac{1+\eps}{m} }\right)}{\prod_{g \le e}\bP\left(C^{-1}_g \le |g|^{\frac{1+\eps}{m} }\right)} \\
\le&\frac{1}{p_0^3}\times\frac{\prod_{g \le e_1}\bP\left(C^{-1}_g \le |g|^{\frac{1+\eps}{m} }\right)}{\prod_{g \le e}\bP\left(C^{-1}_g \le |g|^{\frac{1+\eps}{m} }\right)}\times\frac{\prod_{g \le e_2}\bP\left(C^{-1}_g \le |g|^{\frac{1+\eps}{m} }\right)}{\prod_{g \le e}\bP\left(C^{-1}_g \le |g|^{\frac{1+\eps}{m} }\right)} \\
&\times\frac{\bP\left(e_1\in\mathcal{T}_C\Big|\bigcap_{g\le e_1} \left\{C^{-1}_g \le |g|^{\frac{1+\eps}{m} } \right\}\right)}{\bP\left(e\in\mathcal{T}_C\Big|\bigcap_{g\le e} \left\{C^{-1}_g \le |g|^{\frac{1+\eps}{m} } \right\}\right)^2} \bP\left(e_2\in\mathcal{T}_C\Big|\bigcap_{g\le e_2} \left\{C^{-1}_g \le |g|^{\frac{1+\eps}{m} } \right\}\right)\\
{=}& \frac{1}{p_0^3}\bP\left(e_1\in\mathcal{T}_C\Big|e\in\mathcal{T}_C\right)\times \bP\left(e_2\in\mathcal{T}_C\Big|e\in\mathcal{T}_C\right),
\end{split}
\end{equation}
{where the first equality simply uses the definition of conditional probability, the second uses \eqref{previousbound} and bounds the probability in the numerator by $1$, the third is a simple re-writing, the fourth uses again \eqref{previousbound} and bounds the probability in the denominator by 1 and, finally, the fifth one is just using the definition of conditional probability.}\\
This proves that the percolation is quasi-independent.\\
{Let $e$ be a generic edge with $|e|>1$, and denote by $e_0$  its parent.}
Using \eqref{previousbound}, \eqref{defpercocond} and again  Corollary \ref{coropercobound}, we have that, there exists $c_0>0$ such that
\begin{equation}
\begin{split}
&{\bf P}\left(e\notin\mathcal{T}_C\Big|C^{-1}_e \le |e|^{\frac{1+\eps}{m} } , e_0\in\mathcal{T}_C\right)=\frac{\bP\left(e\notin\mathcal{T}_C,C^{-1}_e \le |e|^{\frac{1+\eps}{m} } , e_0\in\mathcal{T}_C\right)}{\bP\left(C^{-1}_e \le |e|^{\frac{1+\eps}{m} } , e_0\in\mathcal{T}_C\right)} \\
=&\frac{\bP\left(e\notin\mathcal{T}_C,C^{-1}_e \le |e|^{\frac{1+\eps}{m} } , e_0\in\mathcal{T}_C\right)}{\bP\left(e_0\in\mathcal{T}_C\right)\bP\left(C^{-1}_e \le |e|^{\frac{1+\eps}{m} }\right)} \\
=&\frac{\bP\left(e\notin\mathcal{T}_C,C^{-1}_e \le |e|^{\frac{1+\eps}{m} } , e_0\in\mathcal{T}_C\right)}{\bP\left(e_0\in\mathcal{T}_C\right)\bP\left(\bigcap_{g\le e} \left\{C^{-1}_g \le |g|^{\frac{1+\eps}{m} } \right\}\right)}  \bP\left(\bigcap_{g\le e_0} \left\{C^{-1}_g \le |g|^{\frac{1+\eps}{m} } \right\}\right) \\
\le&\frac{\bP\left(e\notin\mathcal{T}_C,C^{-1}_e \le |e|^{\frac{1+\eps}{m} },\bigcap_{g\le e_0} \left\{C^{-1}_g \le |g|^{\frac{1+\eps}{m} } \right\}\right)}{\bP\left(\bigcap_{g\le e} \left\{C^{-1}_g \le |g|^{\frac{1+\eps}{m} } \right\}\right)} \frac{\bP\left(\bigcap_{g\le e_0} \left\{C^{-1}_g \le |g|^{\frac{1+\eps}{m} } \right\}\right)}{\bP\left(e_0\in\mathcal{T}_C\right)} \\
\le& \frac{\bP\left(e\notin\mathcal{T}_C\Big|\bigcap_{g\le e} \left\{C^{-1}_g \le |g|^{\frac{1+\eps}{m} } \right\}\right)}{\bP\left(e_0\in\mathcal{T}_C\Big|\bigcap_{g\le e_0} \left\{C^{-1}_g \le |g|^{\frac{1+\eps}{m} } \right\}\right)}
\le \frac{c_0}{|e|^{{1+\eps}}}.
\end{split}
\end{equation}
Thus, we obtain that 
\begin{equation}
\begin{split}
1-\psi_C(e)&=\bP\left(\left.e\notin\mathcal{T}_C\right|e_0\in\mathcal{T}_C\right)\\
&\le \bP\left(C^{-1}_e > |e|^{\frac{1+\eps}{m} }\right)+\bP\left(e\notin\mathcal{T}_C\Big|C^{-1}_e \le |e|^{\frac{1+\eps}{m} } , e_0\in\mathcal{T}_C\right)\\
&\le \frac{c_0+L(|e|^{\frac{1+\eps}{m} })}{|e|^{1+\eps}}.
\end{split}
\end{equation}
Therefore, there exists $n_0>1$ such that, for any $e\in E$ with $|e|>n_0$, we have that
\[
\psi_C(e)\ge 1-\frac{\epsilon}{2}|e|^{-1}.
\]
By Proposition \ref{siham}, as the percolation defined by \eqref{defpercocond} is quasi-independent and $\eps<b$, we have that $br_r(\mathcal{T}_C)\ge b-\eps$ with positive probability.
\end{proof}
%
%
%

Let us consider different cases and prove that $RT(\mathcal{T}, \psi_{RC})>1$, where we refer to \eqref{defBR} for a definition of this quantity.

\begin{proposition}
If $m\in(0,1)$ and $bm>1$ then $RT(\mathcal{T},\psi_{RC})>1$ with positive ${\bf P}$-probability.
\end{proposition}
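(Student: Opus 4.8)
The plan is to transfer the question to the quasi-independent percolation cluster $\mathcal{T}_C$ defined in \eqref{defpercocond}, whose branching-ruin number was already controlled in Proposition~\ref{boundbrr}. Recall the rewriting \eqref{defBR}: since $\prod_{g\le e}\psi_{RC}(g)$ telescopes to $C_{e_1}^{-1}\big/\sum_{g\le e}C_g^{-1}$, where $e_1\le e$ is the generation-$1$ ancestor edge of $e$ and the prefactor $C_{e_1}^{-1}$ is bounded above and below by a.s.\ positive constants (there are finitely many edges at generation $1$), it suffices to produce some $\lambda>1$ such that, with positive $\mathbf{P}$-probability, $\inf_{\pi}\sum_{e\in\pi}\big(\sum_{g\le e}C_g^{-1}\big)^{-\lambda}>0$. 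I would take $\lambda=1+\epsilon$, with $\epsilon$ the parameter fixed just before \eqref{defpercocond}, so that the inequality \eqref{epsm} is available.

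I would then work on the event $A:=\{br_r(\mathcal{T}_C)\ge b-\epsilon\}$, which by Proposition~\ref{boundbrr} has $\mathbf{P}(A)>0$ and on which $\mathcal{T}_C$ is necessarily infinite (as $b-\epsilon>b-1>0$). The decisive point is that $m\in(0,1)$ gives $1\vee\frac1m=\frac1m$, so the second event in \eqref{defpercocond} forces every open edge $e$ with $|e|>1$ to satisfy $\sum_{g\le e}C_g^{-1}\le|e|^{(1+(m+3)\epsilon)/m}$, hence $\big(\sum_{g\le e}C_g^{-1}\big)^{-(1+\epsilon)}\ge|e|^{-\gamma}$ with $\gamma:=(1+\epsilon)\tfrac{1+(m+3)\epsilon}{m}$. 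Since there are finitely many generation-$1$ edges with a.s.\ positive conductances, there is an a.s.\ positive constant $c_1\le1$ such that $\big(\sum_{g\le e}C_g^{-1}\big)^{-(1+\epsilon)}\ge c_1|e|^{-\gamma}$ for every edge $e$ of $\mathcal{T}_C$, including at generation $1$.

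Next I would compare cutsets of $\mathcal{T}$ and of $\mathcal{T}_C$. On $A$, for any cutset $\pi$ of $\mathcal{T}$ the set of its edges belonging to $\mathcal{T}_C$ separates the root from infinity inside $\mathcal{T}_C$, hence contains a cutset $\pi'$ of $\mathcal{T}_C$; since all summands are nonnegative,
\[
\sum_{e\in\pi}\Big(\sum_{g\le e}C_g^{-1}\Big)^{-(1+\epsilon)}\ \ge\ \sum_{e\in\pi'}\Big(\sum_{g\le e}C_g^{-1}\Big)^{-(1+\epsilon)}\ \ge\ c_1\sum_{e\in\pi'}|e|^{-\gamma}\ \ge\ c_1\inf_{\pi''}\sum_{e\in\pi''}|e|^{-\gamma},
\]
the last infimum being over cutsets of $\mathcal{T}_C$. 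Taking the infimum over cutsets $\pi$ of $\mathcal{T}$ transfers this bound. Finally, \eqref{epsm}, which holds because $bm>1$, gives $\gamma\le b-2\epsilon<b-\epsilon\le br_r(\mathcal{T}_C)$ on $A$, so by the definition \eqref{branchingdef2} of the branching-ruin number (and monotonicity of $\sum_{e\in\pi}|e|^{-\gamma}$ in $\gamma$) the infimum $\inf_{\pi''}\sum_{e\in\pi''}|e|^{-\gamma}$ is strictly positive on $A$. Hence $\inf_{\pi}\sum_{e\in\pi}\big(\sum_{g\le e}C_g^{-1}\big)^{-(1+\epsilon)}>0$ on $A$, so $RT(\mathcal{T},\psi_{RC})\ge 1+\epsilon>1$ on the positive-probability event $A$, which is the claim.

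All the estimates are routine once Proposition~\ref{boundbrr} is in hand; the two points needing care are the passage between $\mathcal{T}$-cutsets and $\mathcal{T}_C$-cutsets (legitimate precisely because every summand is positive, so shrinking a separating set to a genuine cutset only decreases the sum), and the observation that it is exactly the hypothesis $m<1$ that makes the exponent $\gamma$ coming out of \eqref{defpercocond} equal to the one that \eqref{epsm} bounds below $b-\epsilon$ in the regime $bm>1$ — without $m<1$ one would be in the complementary sub-case, handled separately in the paper.
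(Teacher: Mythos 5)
Your proof is correct and follows essentially the same route as the paper: restrict to the quasi-independent percolation cluster $\mathcal{T}_C$ of \eqref{defpercocond}, use Proposition~\ref{boundbrr} to get $br_r(\mathcal{T}_C)\ge b-\eps$ with positive probability, bound $\sum_{g\le e}C_g^{-1}$ on open edges by $|e|^{(1+(m+3)\eps)/m}$ (using $m<1$), and invoke \eqref{epsm} to compare with a convergent cutset sum. The only difference is that you spell out two steps the paper leaves implicit (the harmless $C_{e_1}^{-1}$ prefactor in the telescoping product behind \eqref{defBR}, and the comparison $\inf_{\Pi}\ge\inf_{\Pi_C}$ via restriction of cutsets), which is fine.
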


\begin{proof}
Recall the percolation $\mathcal{T}_C$ defined in \eqref{defpercocond}. {Let us denote $\Pi_C$ the set of all the cutsets in $\mathcal{T}_C$. By Proposition \ref{boundbrr}, we have that $br_r(\mathcal{T}_C)\ge b-\eps$ with positive ${\bf P}$-probability.}  On this event, we have that
\begin{equation}
\begin{split}
  \inf_{\pi \in \Pi } \sum_{e \in \pi} \left(\frac{1}{\sum_{i \le e} C^{-1}_i}\right)^{1+\eps}  &\ge   \inf_{\pi \in \Pi_C } \sum_{e \in \pi} \left(\frac{1}{\sum_{g \le e} C^{-1}_g}\right)^{1+\eps} \\
&  \ge\inf_{\pi \in \Pi_C } \sum_{e \in \pi} \left( |{e}|^{- \frac{1}{m}-\frac{m+3}{m}\eps}\right)^{1+\eps}\\
&  \ge\inf_{\pi \in \Pi_C } \sum_{e \in \pi}  |{e}|^{- (b-2\eps)}>0,
\end{split}
\end{equation}
where we used \eqref{epsm}. This implies that $RT(\mathcal{T}, \psi_{RC})>1$  with positive ${\bf P}$-probability, as defined in \eqref{defBR}.
\end{proof}

\begin{proposition}
If $m\ge 1$ and if $b>1$ then $RT(\mathcal{T},\psi_{RC})>1$ with positive ${\bf P}$-probability.
\end{proposition}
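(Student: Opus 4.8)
The plan is to repeat the argument of the preceding proposition almost verbatim; the only new ingredient is that the hypothesis $m\ge1$ forces $1\vee\frac1m=1$ and $\frac{m+3}{m}\le4$, which is exactly what makes condition \eqref{epsm2} (the one imposed in the regime $b>1$) strong enough. First I recall the quasi-independent percolation $\mathcal{T}_C$ defined in \eqref{defpercocond} and invoke Proposition~\ref{boundbrr}: with positive ${\bf P}$-probability one has $br_r(\mathcal{T}_C)\ge b-\eps$. I work on this event from now on, and write $\Pi_C$ for the set of cutsets of $\mathcal{T}_C$. As in the previous proof, for any cutset $\pi$ of $\mathcal{T}$ the first edge of $\pi$ met by an infinite open path lies on a fully open path and hence belongs to $\mathcal{T}_C$, so $\pi\cap\mathcal{T}_C$ contains a cutset of $\mathcal{T}_C$; since all summands below are nonnegative, the infimum over $\Pi$ in \eqref{defBR} dominates the infimum over $\Pi_C$.

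On $\mathcal{T}_C$ every open edge $e$ with $|e|>1$ satisfies $\sum_{g\le e}C_g^{-1}\le|e|^{(1\vee\frac1m)+\frac{m+3}{m}\eps}=|e|^{1+\frac{m+3}{m}\eps}$, by the second event in \eqref{defpercocond} together with $m\ge1$. Taking the exponent $\gamma=1+\eps$ in \eqref{defBR},
\begin{align*}
\inf_{\pi\in\Pi}\sum_{e\in\pi}\left(\frac{1}{\sum_{i\le e}C_i^{-1}}\right)^{1+\eps}
&\ge\inf_{\pi\in\Pi_C}\sum_{e\in\pi}\left(\frac{1}{\sum_{g\le e}C_g^{-1}}\right)^{1+\eps}\\
&\ge\inf_{\pi\in\Pi_C}\sum_{e\in\pi}|e|^{-(1+\eps)\left(1+\frac{m+3}{m}\eps\right)}\ge\inf_{\pi\in\Pi_C}\sum_{e\in\pi}|e|^{-(b-2\eps)}>0,
\end{align*}
where the penultimate inequality uses $(1+\eps)\bigl(1+\tfrac{m+3}{m}\eps\bigr)\le(1+\eps)(1+4\eps)\le b-2\eps$, valid since $\tfrac{m+3}{m}\le4$ and by the choice \eqref{epsm2} of $\eps$ in the case $b>1$, and the strict positivity follows from $br_r(\mathcal{T}_C)\ge b-\eps>b-2\eps$ together with the definition \eqref{branchingdef2} of the branching-ruin number.

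Hence $RT(\mathcal{T},\psi_{RC})\ge1+\eps>1$ on an event of positive ${\bf P}$-probability, which is the claim. I do not expect a genuine obstacle here: all the substantive work — the moment estimate of Proposition~\ref{conductbound}, the tail bound of Corollary~\ref{coropercobound}, the quasi-independence, and the lower bound $br_r(\mathcal{T}_C)\ge b-\eps$ — has already been carried out in Proposition~\ref{conductbound}, Corollary~\ref{coropercobound} and Proposition~\ref{boundbrr}. The only point requiring care is the exponent bookkeeping in the displayed chain of inequalities, and this is precisely where the hypothesis $m\ge1$ is used, via $1\vee\frac1m=1$ and $\frac{m+3}{m}\le4$.
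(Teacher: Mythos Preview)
Your proof is correct and follows essentially the same route as the paper's own argument: invoke Proposition~\ref{boundbrr} to get $br_r(\mathcal{T}_C)\ge b-\eps$ with positive probability, pass from cutsets of $\mathcal{T}$ to cutsets of $\mathcal{T}_C$, bound $\sum_{g\le e}C_g^{-1}$ using the second event in \eqref{defpercocond}, and then use \eqref{epsm2} together with $m\ge1$ to control the exponent. The only difference is cosmetic---you keep the exponent $1+\tfrac{m+3}{m}\eps$ one step longer before bounding it by $1+4\eps$, and you spell out the cutset comparison $\Pi\to\Pi_C$ that the paper leaves implicit.
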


\begin{proof}
Recall the percolation $\mathcal{T}_C$ defined in \eqref{defpercocond}. By Proposition \ref{boundbrr}, we have that $br_r(\mathcal{T}_C)\ge b-\eps$ with positive probability. Let us denote $\Pi_C$ the set of all the cutsets in $\mathcal{T}_C$. On this event, we have that, if $b>1$,
\begin{equation}
\begin{split}
  \inf_{\pi \in \Pi } \sum_{e \in \pi} \left(\frac{1}{\sum_{i \le e} C^{-1}_i}\right)^{1+\eps}  &\ge   \inf_{\pi \in \Pi_C } \sum_{e \in \pi} \left(\frac{1}{\sum_{g \le e} C^{-1}_g}\right)^{1+\eps} \\
&  \ge\inf_{\pi \in \Pi_C } \sum_{e \in \pi} \left( |{e}|^{- 1-4\eps}\right)^{1+\eps}\\
&  \ge\inf_{\pi \in \Pi_C } \sum_{e \in \pi}  |{e}|^{- (b-2\eps)}>0,
\end{split}
\end{equation}
where we used \eqref{epsm2}. This implies that $RT(\mathcal{T}, \psi_{RC})>1$  with positive ${\bf P}$-probability, as defined in \eqref{defBR}. 
\end{proof}

\subsection{Recurrence: proof of the second item of Proposition \ref{mainth0}} \label{sec:recurrence}
We will again consider different cases and prove this time that $RT(\mathcal{T}, \psi_{RC})<1$, where we refer to \eqref{defBR} for a definition of this quantity.
\begin{proposition}
If $b\ge1$ and $bm<1$ then $RT(\mathcal{T}, \psi_{RC})<1$, ${\bf P}$-almost surely.
\end{proposition}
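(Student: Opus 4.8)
The goal is to show $RT(\mathcal{T},\psi_{RC})<1$ for ${\bf P}$-almost every environment; by \eqref{defBR} it is enough to exhibit a single exponent $\gamma<1$ for which, almost surely, $\inf_{\pi\in\Pi}\sum_{e\in\pi}\big(\sum_{i\le e}C^{-1}_i\big)^{-\gamma}=0$. Since $bm<1$ and $b\ge1$ we have $m<1$, so we may fix $\gamma\in(bm,1)$; then $\gamma/m>b$, so we may also fix $\delta\in(0,1/m)$ small enough that $\alpha:=\gamma(1/m-\delta)>b=br_r(\mathcal{T})$. By definition of the branching-ruin number, $\inf_{\pi\in\Pi}\sum_{e\in\pi}|e|^{-\alpha}=0$, so for each $k\ge1$ we may pick $\pi_k\in\Pi$ with $\sum_{e\in\pi_k}|e|^{-\alpha}<4^{-k}$.

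The only probabilistic input is a crude lower bound on the partial conductance sums along rays. For any edge $e$ one has $\sum_{i\le e}C^{-1}_i\ge\max_{i\le e}C^{-1}_i$, and since the conductances along the ray $[\r,e^+]$ are i.i.d., \eqref{distr-cond} yields
\[
{\bf P}\Big(\sum_{i\le e}C^{-1}_i<|e|^{1/m-\delta}\Big)\le{\bf P}\Big(\max_{i\le e}C^{-1}_i<|e|^{1/m-\delta}\Big)=\Big(1-\frac{L(|e|^{1/m-\delta})}{|e|^{1-\delta m}}\Big)^{|e|}\le \exp\big(-|e|^{\delta m/2}\big)
\]
for all $e$ with $|e|$ large enough, using $(1-x)^N\le e^{-Nx}$ and the fact that a slowly varying function is eventually bounded below by every negative power of its argument. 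It is worth stressing that this bound involves only the behaviour of $C_e$ near $0$, so no hypothesis on large conductances is needed.

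Now observe that since $\sum_{e\in\pi_k}|e|^{-\alpha}<4^{-k}$, every $e\in\pi_k$ has $|e|>4^{k/\alpha}$, so $|\pi_k|:=\min\{|e|:e\in\pi_k\}\to\infty$, and moreover $\#\{e\in\pi_k:|e|=n\}\le 4^{-k}n^{\alpha}$, i.e.\ $\pi_k$ contains only polynomially many edges per generation. Writing $G_k$ for the event that $\sum_{i\le e}C^{-1}_i\ge|e|^{1/m-\delta}$ for every $e\in\pi_k$, a union bound and the previous display give, for all $k$ large,
\[
{\bf P}(G_k^c)\le\sum_{e\in\pi_k}\exp\big(-|e|^{\delta m/2}\big)=\sum_{n>4^{k/\alpha}}\#\{e\in\pi_k:|e|=n\}\,e^{-n^{\delta m/2}}\le 4^{-k}\,C_0,
\]
where $C_0=\sum_{n\ge1}n^{\alpha}e^{-n^{\delta m/2}}<\infty$. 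Hence $\sum_k{\bf P}(G_k^c)<\infty$, and by the Borel--Cantelli lemma, almost surely $G_k$ holds for all large $k$. On $G_k$ one has $\sum_{e\in\pi_k}\big(\sum_{i\le e}C^{-1}_i\big)^{-\gamma}\le\sum_{e\in\pi_k}|e|^{-\gamma(1/m-\delta)}=\sum_{e\in\pi_k}|e|^{-\alpha}<4^{-k}$, so the infimum over $\Pi$ of $\sum_{e\in\pi}\big(\sum_{i\le e}C^{-1}_i\big)^{-\gamma}$ is $0$ almost surely, and therefore $RT(\mathcal{T},\psi_{RC})\le\gamma<1$ almost surely.

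The delicate point is the union bound in the last step. Because $br_r(\mathcal{T})<\infty$ only forces sub-exponential growth of $\mathcal{T}$ (indeed $br(\mathcal{T})=1$), its spheres may grow faster than $\exp(n^{\delta m/2})$, so one cannot apply Borel--Cantelli over all edges at once; the resolution is to run it over a single $\alpha$-efficient cutset $\pi_k$, which automatically has only polynomially many edges per level and lies arbitrarily deep in $\mathcal{T}$. A related subtlety, already visible above, is that one cannot replace this by a first-moment (Fatou) argument, since ${\bf E}\big[\big(\sum_{i\le e}C^{-1}_i\big)^{-\gamma}\big]$ may be infinite when $C_e$ has a heavy tail at $+\infty$; the estimate used here, resting only on $\sum_{i\le e}C^{-1}_i\ge\max_{i\le e}C^{-1}_i$, is insensitive to large conductances.
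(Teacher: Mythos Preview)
Your proof is correct and follows essentially the same approach as the paper's: both bound $\sum_{i\le e}C_i^{-1}$ from below via $\max_{i\le e}C_i^{-1}$, both pick a sequence of $\alpha$-efficient cutsets witnessing $\alpha>b$, and both apply Borel--Cantelli over these cutsets to conclude that $\sum_{e\in\pi_k}\big(\sum_{i\le e}C_i^{-1}\big)^{-\gamma}\to0$ almost surely. The only cosmetic difference is in how summability for Borel--Cantelli is checked---the paper simply uses that the stretched-exponential bound $\exp\{-|e|^{\delta m}L(\cdot)\}$ is eventually $\le|e|^{-(b+\delta)}$ termwise and then invokes the cutset sum directly, whereas you count edges per level; both are minor variations of the same argument.
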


\begin{proof}
{Fix two positive parameters $\delta$ and $\eps$ such that $(1/m) - \delta>0$ and  
\begin{equation}\label{epsdelt} 
\left(\frac{1}{m} - \delta\right)(1 -\eps)\ge b + \delta.
\end{equation}
 The latter is possible as $mb<1$.

We  have that
\begin{equation}
\begin{split}
\P\left(\sum_{i \le e}  C_i^{-1} \le |e|^{\frac{1}{m} - \delta}\right)&\le \P\left(\bigcap_{i \le e}  C_i^{-1} \le |e|^{\frac{1}{m} - \delta}\right)\\
&= \left(1 - \frac {L\left(|e|^{\frac{1}{m}-\delta} \right)}{|e|^{(\frac{1}{m}- \delta) m}}\right)^{|e|} \le \exp\left \{ - |e|^{\delta m}L\left(|e|^{\frac{1}{m}-\delta} \right)\right \}.
\end{split}
\end{equation}
By the definition of branching-ruin number, there exists a sequence of cutsets $(\pi_n, n\geq 1)$ such that for any $n>0$, 
\begin{equation}\label{billburr}
\underset{e\in \pi_n}{\sum}\frac{1}{|e|^{b+\delta}}<\exp\{-n\}.
\end{equation}
On the other hand, for any $n>0$ we have,
\begin{equation}
\begin{split}
 \P\left(\bigcup_{e \in \pi_n} \Big\{\sum_{i \le e} C_i^{-1} \le |e|^{\frac{1}{m} - \delta}\Big\}\right) &\le  \sum_{e \in \pi_n} \P\left(\sum_{i \le e} C_i^{-1} \le |e|^{\frac{1}{m} - \delta}\right)
 \\
 &\le   \sum_{e \in \pi_n}\exp\left \{ - |e|^{\delta m}L\left(|e|^{\frac{1}{m}-\delta} \right)\right \}.
\end{split}
 \end{equation}
Note that there exists $n_0$ such that for any $n>n_0$, we have,
$$ \sum_{e \in \pi_n}\exp\left \{ - |e|^{\delta m}L\left(|e|^{\frac{1}{m}-\delta} \right)\right \} \leq \underset{e\in \pi_n}{\sum}\frac{1}{|e|^{b+\delta}}<\exp\{-n\} $$
Therefore, we have that
$$\underset{n\geq 1}{\sum}\, \P\left(\bigcup_{e \in \pi_n} \Big\{\sum_{i \le e} C_i^{-1} \le |e|^{\frac 1m - \delta}\Big\}\right) <\infty.$$
In virtue of the first Borel Cantelli Lemma, all edges $e\in \underset{n\geq 1}{\bigcup}\pi_n$, with the exception of finitely many,    satisty
\begin{equation}\label{rec1.5}
\sum_{i \le e} C_i^{-1} > |e|^{\frac{1}{m} - \delta}.
\end{equation}
Hence, for $n$ large enough
\begin{equation}\label{rec2}
\begin{aligned}
 \sum_{e \in \pi_n} \frac 1{(\sum_{i \le e} C_i^{-1})^{(1- \eps)}}&\le \sum_{e \in \pi_n}  \frac 1{|e|^{(\frac{1}{m} - \delta)(1 -\eps)}}\le  \sum_{e \in \pi_n}  \frac 1{|e|^{b+\delta}}<\exp\{-n\}.
\end{aligned}
\end{equation}
where we used  \eqref{epsdelt}. Hence,
\begin{equation}\label{manshit}
\underset{n\rightarrow \infty}{\lim} \sum_{e \in \pi_n} \frac 1{(\sum_{i \le e} C_i^{-1})^{(1- \eps)}}=0.
\end{equation}
Therefore, we have that 
\begin{equation}\label{rec1.6}
 0\leq \inf_{\pi \in \Pi} \sum_{e \in \pi}\left( \frac 1{\sum_{i \le e} C_i^{-1} }\right)^{1-\eps}\leq  \inf_{n\geq 1}\sum_{e \in \pi_n}\left( \frac 1{\sum_{i \le e} C_i^{-1} }\right)^{1-\eps}=0.
\end{equation}
Hence $RT(\mathcal{T}, \psi_{RC}) \le 1- \eps$.}
\end{proof}

The next result concludes the proof of Theorem \ref{mainth}.

\begin{proposition}
If $b<1$  then $RT(\mathcal{T}, \psi_{RC})<1$, ${\bf P}$-almost surely.
\end{proposition}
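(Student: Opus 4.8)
The plan is to show that $RT(\mathcal{T},\psi_{RC})<1$ almost surely under the sole assumption $b<1$, regardless of the value of $m$. The key observation is that $\psi_{RC}(e)\le 1$ for every edge, so that $\prod_{g\le e}\psi_{RC}(g)\le 1$ for all $e$; hence for any $\gamma\ge 1$ one has the trivial bound
\begin{equation*}
\inf_{\pi\in\Pi}\sum_{e\in\pi}\Big(\prod_{g\le e}\psi_{RC}(g)\Big)^\gamma
\le \inf_{\pi\in\Pi}\sum_{e\in\pi}\prod_{g\le e}\psi_{RC}(g).
\end{equation*}
So it suffices to prove that $\inf_{\pi\in\Pi}\sum_{e\in\pi}\prod_{g\le e}\psi_{RC}(g)=0$, i.e. that $RT(\mathcal{T},\psi_{RC})\le 1$, which would already give the strict inequality once combined with the fact that the supremum in \eqref{defBR} is not attained at $1$ — or more cleanly, I would directly produce, for \emph{every} $\gamma>0$, a sequence of cutsets along which the sum vanishes.

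The first step is to simplify $\prod_{g\le e}\psi_{RC}(g)$. Since $\psi_{RC}(g)=\big(\sum_{h<g}C_h^{-1}\big)/\big(\sum_{h\le g}C_h^{-1}\big)$ for $|g|>1$ and $\psi_{RC}(g)=1$ for $|g|=1$, the product telescopes: writing $S_g=\sum_{h\le g}C_h^{-1}$ along the ray $[\r,e^+]$, we get $\prod_{g\le e}\psi_{RC}(g)=C_{e_1}^{-1}/S_e$ where $e_1$ is the first edge of that ray, hence
\begin{equation*}
\prod_{g\le e}\psi_{RC}(g)=\frac{C_{e_1}^{-1}}{\sum_{g\le e}C_g^{-1}}\le \frac{C_{e_1}^{-1}}{C_{e_1}^{-1}}=1,
\end{equation*}
and more usefully $\prod_{g\le e}\psi_{RC}(g)\le C_{e_1}^{-1}\big(\sum_{g\le e}C_g^{-1}\big)^{-1}$; in particular it is bounded above by $C_{e_1}^{-1}/\big(\max_{g\le e}C_g^{-1}\big)$, so it decays at least like the reciprocal of the running maximum of the $C_g^{-1}$ along the ray.

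The second step is the deterministic growth estimate. Because $b=br_r(\mathcal{T})<1$, by definition of the branching-ruin number \eqref{branchingdef2}, for any $\gamma\in(b,1)$ we have $\inf_{\pi\in\Pi}\sum_{e\in\pi}|e|^{-\gamma}=0$; pick a sequence of cutsets $(\pi_n)$ with $\sum_{e\in\pi_n}|e|^{-\gamma}<e^{-n}$. Along each $\pi_n$ I want to dominate $\prod_{g\le e}\psi_{RC}(g)$ by a constant times $|e|^{-\gamma}$, with probability tending to $1$ fast enough. Here the natural bound is $\prod_{g\le e}\psi_{RC}(g)\le C_{e_1}^{-1}/S_e$ and one needs $S_e=\sum_{g\le e}C_g^{-1}$ to be reasonably large; but in fact it is cleaner to bound $\prod_{g\le e}\psi_{RC}(g)\le 1$ crudely on a sparse bad set and to use that, since the conductances are i.i.d. and a.s. positive (so $C_g^{-1}<\infty$ a.s.), for the \emph{fixed} first edge $e_1$ of the ray through any given $e$ the factor $C_{e_1}^{-1}$ is an a.s. finite random variable. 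The subtlety is that different edges $e\in\pi_n$ may lie on rays with different first edges. A robust route: fix one edge $f$ at generation $1$, restrict attention to the subtree $\mathcal{T}_f$ hanging from $f$, note $br_r(\mathcal{T}_f)=b$ as well (the branching-ruin number is a max over generation-$1$ subtrees), and within $\mathcal{T}_f$ every ray has first edge $f$, so $\prod_{g\le e}\psi_{RC}(g)\le C_f^{-1}\big(\sum_{g\le e}C_g^{-1}\big)^{-1}\le 1\wedge C_f^{-1}\cdot(\text{something})$; then $\sum_{e\in\pi_n\cap\mathcal{T}_f}\prod_{g\le e}\psi_{RC}(g)\le \sum_{e\in\pi_n\cap\mathcal{T}_f}1$, which is not yet small, so one still needs the $|e|$-growth. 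The genuinely clean argument is: $\prod_{g\le e}\psi_{RC}(g)\le \big(\sum_{g\le e}C_g^{-1}\big)^{-1}\cdot C_f^{-1}$, and $\sum_{g\le e}C_g^{-1}\ge$ (number of $g\le e$ with $C_g^{-1}\ge 1$); by i.i.d.-ness and a Borel–Cantelli / law of large numbers argument this is $\gtrsim c|e|$ simultaneously for all $e$ with $|e|$ large, a.s. Hence $\prod_{g\le e}\psi_{RC}(g)\le C_f^{-1}\,c^{-1}|e|^{-1}\le C_f^{-1}c^{-1}|e|^{-\gamma}$, and therefore $\sum_{e\in\pi_n\cap\mathcal{T}_f}\prod_{g\le e}\psi_{RC}(g)\le C_f^{-1}c^{-1}e^{-n}\to 0$.

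Putting it together, $\inf_{\pi\in\Pi}\sum_{e\in\pi}\prod_{g\le e}\psi_{RC}(g)=0$ a.s., and by the monotonicity in $\gamma$ noted above this forces $RT(\mathcal{T},\psi_{RC})\le 1$; to upgrade to the strict inequality $<1$ one repeats the same estimate with $\gamma$ replaced by $\gamma(1-\eps)$ for a fixed small $\eps>0$ — still $\gamma(1-\eps)>b$ can be arranged if $\gamma$ was chosen in $(b,1)$ close enough to $1$ — which shows $\inf_{\pi}\sum_{e\in\pi}(\prod_{g\le e}\psi_{RC}(g))^{1-\eps}=0$ and hence $RT(\mathcal{T},\psi_{RC})\le 1-\eps<1$. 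The main obstacle is the bookkeeping in Step 2: making the lower bound $\sum_{g\le e}C_g^{-1}\gtrsim |e|$ hold \emph{uniformly} over all edges on all the cutsets $\pi_n$ simultaneously, which is where one invokes that $\mathbf{P}(C_e\ge 1)>0$ together with a Borel–Cantelli argument along rays (exactly as in the recurrence proof for the case $mb<1$ above), rather than anything specific to the heavy-tail exponent $m$.
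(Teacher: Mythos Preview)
Your overall strategy is correct and is exactly the paper's: show that $S_e:=\sum_{g\le e}C_g^{-1}$ grows like a positive power of $|e|$ for every edge $e$ lying on a well-chosen sequence of cutsets $(\pi_n)$, then use $br_r(\mathcal T)<1$ to make $\sum_{e\in\pi_n}\big(\prod_{g\le e}\psi_{RC}(g)\big)^{1-\eps}$ vanish. The telescoping and the conclusion step are fine.

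There are, however, two imprecisions worth fixing. First, the claim ``$S_e\gtrsim c|e|$ simultaneously for all $e$ with $|e|$ large'' is stronger than you need and is not in general true: a tree with $br_r(\mathcal T)<1$ may still have arbitrarily many edges at each level (think of a tree with one-vertex bottlenecks at levels $2^k$ and enormous bushes in between), so a union bound over \emph{all} edges can fail. What you actually need, and what you correctly say later, is the bound only for $e\in\pi_n$ with $n$ large. This is obtained by Borel--Cantelli on the events $A_n=\bigcup_{e\in\pi_n}\{S_e< c|e|\}$, not ``along rays''; the law of large numbers by itself gives no summable tail bound, so you must supply one (e.g.\ large deviations for $\sum_{g\le e}(C_g^{-1}\wedge \eta)$ gives $\mathbf P(S_e<c|e|)\le e^{-c'|e|}$, which, since $e^{-c'|e|}\le C|e|^{-\gamma}$, is summable over $\pi_n$). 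Second, the hypothesis $\mathbf P(C_e\ge 1)>0$ points the wrong way: it says $C_e^{-1}\le 1$ with positive probability, not $\ge 1$. What you want is some $\eta>0$ with $\mathbf P(C_e^{-1}>\eta)>0$, which holds automatically since $C_e^{-1}>0$ a.s.

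For comparison, the paper avoids large deviations and instead proves the weaker lower bound $S_e\ge\eta|e|^{1-\delta}$ with a polynomial tail $\mathbf P(S_e<\eta|e|^{1-\delta})\le c|e|^{-b-\delta}$, obtained by a simple blocking argument: split the path into $\sim|e|^{1-\delta}$ blocks of length $|e|^\delta$ and use that each block contains at least one $g$ with $C_g^{-1}>\eta$ with high probability. This polynomial bound is already summable over the cutsets $\pi_n$ chosen so that $\sum_{e\in\pi_n}|e|^{-(b+\delta)}\to 0$. Your linear bound with an exponential tail is a legitimate, and arguably cleaner, alternative once the two points above are addressed.
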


\begin{proof}
First, fix $\delta\in(0,1)$ such that
\begin{equation}
\label{equ:recurrence case2.0}
(1-\delta)^2 >b+\delta.
\end{equation}
The latter is possible as $b<1$. Then, note that, for any  $\eps \in (0,1)$, there exists $\eta>0$ such that
\begin{equation}
\P\left(C_0^{-1}>\eta\right)>1-\eps.
\end{equation}
{In the following, we denote $(C_j)_{j\ge0}$ a sequence conductances distributed like a generic conductance $C_e$. There exists a constant $c_{\delta,\eps}>0$ such that, for any $e\in E$,
\begin{equation}\label{equ:recurrencecase2.1}
\begin{split}
\P\left(\sum_{i \le |e|}  C_i^{-1} \le \eta|e|^{1- \delta}\right)&\le \P\left(\bigcup_{k=1}^{|e|/\lfloor |e|^{\delta}\rfloor} \bigcap_{j=(k-1)\lfloor |e|^{\delta}\rfloor+1}^{k\lfloor |e|^{\delta}\rfloor} \left\{  C_j^{-1}\le \eta \right\}\right)\\
&\le \frac{2}{1-\eps}|e|^{1-\delta}\P\left(C_0^{-1}\le\eta\right)^{|e|^\delta}\\
&\le \frac{2}{1-\eps} |e|^{1-\delta} \eps^{|e|^\delta}\\
&\le c_{\delta,\eps}|e|^{-b-\delta}.
\end{split}
\end{equation}
Indeed, to prove the first inequality above, note that
\begin{equation}
\begin{split}
\left\{\bigcup_{k=1}^{|e|/\lfloor |e|^{\delta}\rfloor} \bigcap_{j=(k-1)\lfloor |e|^{\delta}\rfloor+1}^{k\lfloor |e|^{\delta}\rfloor} \left\{  C_j^{-1}\le \eta \right\}\right\}^c&=\bigcap_{k=1}^{|e|/\lfloor |e|^{\delta}\rfloor} \bigcup_{j=(k-1)\lfloor |e|^{\delta}\rfloor+1}^{k\lfloor |e|^{\delta}\rfloor} \left\{  C_j^{-1}> \eta \right\}\\
&\subset  \left\{\sum_{i \le |e|}  C_i^{-1} > \eta|e|^{1- \delta}\right\}=  \left\{\sum_{i \le |e|}  C_i^{-1} \le \eta|e|^{1- \delta}\right\}^c.
\end{split}
\end{equation}
}
By the definition of branching-ruin number, there exists a sequence of cutsets $(\pi_n, n\geq 1)$ such that for any $n>0$, 
\begin{equation}\label{billburr2}
\underset{e\in \pi_n}{\sum}\frac{1}{|e|^{b+\delta}}<\frac{1}{c_{\delta,\eps}}\exp\{-n\}.
\end{equation}
We use \eqref{equ:recurrencecase2.1} and \eqref{billburr2} to obtain
\begin{equation}
\begin{split}
 \P\left(\bigcup_{e \in \pi_n} \Big\{\sum_{g \le e} C_g^{-1} \le \eta |e|^{1 - \delta}\Big\}\right) &\le c_{\delta,\eps} \sum_{e \in \pi_n} |e|^{-b-\delta}\le \exp(-n).
\end{split}
 \end{equation}
 Therefore, by Borel-Cantelli Lemma, as soon as $n$ is large enough, we have that
 \[
 \bigcap_{e \in \pi_n} \Big\{\sum_{i \le e} C_i^{-1} > \eta |e|^{1 - \delta}\Big\}
 \]
 holds, which implies that
 \begin{equation}\label{rec3}
\begin{aligned}
 \sum_{e \in \pi_n} \frac 1{(\sum_{i \le e} C_i^{-1})^{(1- \delta)}}&\le \frac{1}{\eta^{1-\delta}}\sum_{e \in \pi_n}  \frac 1{|e|^{(1 - \delta)(1 -\delta)}}\le  \frac{1}{\eta^{1-\delta}}\sum_{e \in \pi_n}  \frac 1{|e|^{b+\delta}}<\frac{\exp\{-n\}}{c_{\delta, \eps}{\eta}^{1-\delta}}.
\end{aligned},
\end{equation}
where we used \eqref{equ:recurrence case2.0}. Hence, following a strategy similar to \eqref{manshit}, \eqref{rec1.6}, we have that $RT(\mathcal{T}, \psi_{RC}) \le 1- \delta$, ${\bf P}$-almost surely. 
\end{proof}

\section{Proof of Theorem \ref{th:digging0} and  Lemma \ref{mainlemma}} \label{sec-digg}

In this section, we prove Lemma \ref{mainlemma}. With this in hand, Theorem \ref{th:digging} and Theorem \ref{th:digging0} will then trivially follow from Theorem \ref{maintheorem} (proved in Section \ref{sect_thdigg}) by noting that \eqref{condition1} is satisfied when $m_\nu=M\in\mathbb{N}$ for all $\nu\in V\setminus \{\r\}$.\\
{For any $e\in E$, we define
\begin{equation}
    \label{Psi}
    \Psi_{M, \lambda}(e)\Def \prod_{g\leq e}\psi_{{M,\lambda}}(g).
\end{equation}\\
As we will see in Section \ref{rubin}, $\Psi_{M, \lambda}(e)$ corresponds to the probability that ${\bf X}$, or $\widetilde{\bf X}$, when restricted to $[\r,e^+]$ and started from $\r$, hits $e^+$ before returning to $\r$.}

\begin{proof}[Proof of Lemma \ref{mainlemma}]
Here, we assume that $(m_\nu; \nu\in V)$ such that $m_\r=0$ and $m_\nu=M\in\mathbb{N}$ for all $\nu\in V\setminus \{\r\}$. Thus, by \eqref{psi} and \eqref{Psi}, we have that, if $\lambda\neq1$,
\begin{equation}
\Psi_{M,\lambda}(e)=\left(\frac{\lambda-1}{\lambda^{|e|}-1}\right)^{M+1},
\end{equation}
and, if $\lambda=1$,
\begin{equation}\label{eqlambda=1}
\Psi_{M, \lambda}(e)=|e|^{-M-1}.
\end{equation}
We will proceed by distinguishing a few cases.\\

\noindent
{\bf Case I: if $\lambda>1$ and $br(\mathcal T)<\lambda^{M+1}$.}\\
By \eqref{branchingdef},  there exists $\delta\in(0,1)$ such that
\begin{equation}\label{small}
\inf_{\pi\in\Pi} \sum_{e\in \Pi} \left(\lambda^{(M+1)(1-\delta)}\right)^{-|e|}=0.
\end{equation}
For any $\pi\in\Pi$, we have that
\begin{equation}\label{comp1}
\begin{split}
\sum_{e\in\pi} \Psi_{M, \lambda}(e)^{1-\delta}&=(\lambda-1)^{(M+1)(1-\delta)}\sum_{e\in\pi}\left(\frac{1}{\lambda^{|e|}-1}\right)^{(M+1)(1-\delta)}\\
&=(\lambda-1)^{(M+1)(1-\delta)}\sum_{e\in\pi}\frac{\lambda^{-|e|(M+1)(1-\delta)}}{(1-\lambda^{-|e|})^{(M+1)(1-\delta)}}\\
&\le\frac{(\lambda-1)^{(M+1)(1-\delta)}}{(1-\lambda^{-1})^{(M+1)(1-\delta)}}    \sum_{e\in\pi}\lambda^{-|e|(M+1)(1-\delta)}.
\end{split}
\end{equation}
Therefore, by \eqref{small},
\begin{equation}
\inf_{\pi\in\Pi} \sum_{e\in\pi} \Psi_{M, \lambda}(e)^{1-\delta}=0,
\end{equation}
which implies that $RT(\mathcal{T}, \psi_{M,\lambda})<1$.\\

\noindent
{\bf Case II: if $\lambda<1$ or if $\lambda>1$ and $br(\mathcal T)>\lambda^{M+1}$.}\\
Next, we prove that there exists $\delta>0$ and $\epsilon>0$ such that
\begin{equation}\label{small2}
\inf_{\pi\in\Pi} \sum_{e\in \Pi} \left(\lambda^{(M+1)(1+\delta)}\right)^{-|e|}>\epsilon.
\end{equation}
To prove the previous inequality, first note that this holds trivially if $\lambda<1$; second, if $\lambda>1$, we use the definition of the branching number and choose $\delta$ such that $\lambda^{(1+\delta)(M+1)}<br(\Tcal)$. A computation similar to \eqref{comp1} yields
\begin{equation}\label{comp2}
\begin{split}
\inf_{\pi\in\Pi}\sum_{e\in\pi} \Psi_{M, \lambda}(e)^{1+\delta}&\ge{(\lambda-1)^{(M+1)(1+\delta)}}\inf_{\pi\in\Pi}\sum_{e\in\pi}\lambda^{-|e|(M+1)(1+\delta)}\\
&>\epsilon.
\end{split}
\end{equation}
Therefore, we have that $RT(\mathcal{T},\psi_{M,\lambda})>1$.\\

\noindent
{\bf Case III:  $br_r(\mathcal{T})>M+1$ and $\lambda=1$.}\\
By \eqref{branchingdef2} , we have that there exists $\delta>0$ and $\epsilon>0$ such that
\begin{equation}
\inf_{\pi\in\Pi} \sum_{e\in \pi}|e|^{-(1+\delta)(M+1)}>\epsilon.
\end{equation}
Therefore, by \eqref{eqlambda=1}, we have that
\begin{equation}
\inf_{\pi\in\Pi} \sum_{e\in \pi} \left(\Psi_{M, \lambda}(e)\right)^{1+\delta}=\inf_{\pi\in\Pi} \sum_{e\in \pi}|e|^{-(1+\delta)(M+1)}>\epsilon, 
\end{equation}
which in turn implies that $RT(\mathcal{T},\psi_{M,\lambda})>1$.\\

\noindent
{\bf Case IV:  $br_r(\mathcal{T})<M+1$ and $\lambda=1$.}\\
We have that there exists $\delta>0$ such that
\begin{equation}
\inf_{\pi\in\Pi} \sum_{e\in \pi}|e|^{-(1-\delta)(M+1)}=0.
\end{equation}
Therefore, by \eqref{eqlambda=1}, we have that
\begin{equation}
\inf_{\pi\in\Pi} \sum_{e\in \pi} \left(\Psi_{M, \lambda}(e)\right)^{1-\delta}=\inf_{\pi\in\Pi} \sum_{e\in \pi}|e|^{-(1-\delta)(M+1)}=0.
\end{equation}
Therefore, we have that $RT(\mathcal{T},\psi_{M,\lambda})<1$.\\

\noindent
{\bf Case V: $br(\mathcal T)>\lambda^{M+1}$ and $\lambda=1$.}\\
Let us prove that $br(\mathcal T)>1$ implies that $br_r(\mathcal T)=\infty$, which gives the conclusion by Case III.
We have that there exists $\delta>0$ and $\epsilon>0$ such that
\begin{equation}\label{small2}
\inf_{\pi\in\Pi} \sum_{e\in \Pi} \left(1+\delta\right)^{-|e|}>\epsilon.
\end{equation}
Therefore, for any $\gamma>0$, there exists a constant $c_0>0$ depending only on $\gamma$, $\delta$ and $\epsilon$, such that
\begin{equation}\label{comp3}
\begin{split}
\sum_{e\in\pi} |e|^{-\gamma}&\ge c_0\sum_{e\in\pi} (1+\delta)^{-|e|} >c_0\epsilon.
\end{split}
\end{equation}
Taking the infimum over $\pi\in\Pi$ allows to conclude that $br_r(\mathcal{T})\ge\gamma$, for any $\gamma>0$, hence $br_r(\mathcal{T})=\infty$.
\end{proof}

\section{Extensions}
\label{rubin}
Here, we define the same construction as in \cite{CHK} and \cite{CKS}, which is a particular case of Rubin's construction. A large part of this section is a verbatim of Section 5 of  \cite{CKS}.\\
The following construction will allow us to emphasize useful independence properties of the walk on disjoint subsets of the tree.\\

Let $(\Omega, \mathcal{F},\bP)$ denote a probability space on which
\begin{align}\label{defY}
{\bf Y}=(Y(\nu,\mu,k): (\nu,\mu)\in V^2, \mbox{with }\nu \sim \mu, \textrm{ and }k \in \N)
\end{align}
is a family of independent  random variables, where $(\nu,\mu)$ {denotes} an {\it ordered} pair { of vertices}, and such that
\begin{itemize}
\item if {$\nu =\parent{\mu}$} and $k=0$, then $Y(\nu,\mu,0)$ a Gamma random variable with parameters $m_{\mu}+1$ and $1$;
\item otherwise, $Y(\nu,\mu,k)$ is an exponential random variable with mean $1$.
\end{itemize}
\begin{remark}\label{remarkgamma}
Recall that a Gamma random variable with parameters $m_{\mu}+1$ and $1$ has the same distribution as the sum of $m_{\mu}+1$\; i.i.d. exponential random variables with mean $1$.
\end{remark}
Below, we use these collections of random variables to generate the steps of $\widetilde{\bf X}$. Moreover,   we  define  a {\it family} of coupled walks using the same collection  of \lq clocks\rq\  $ {\bf Y}$.

Define, for any  $\nu,\mu\in V$ with $\nu\sim \mu$, the quantities
\begin{equation} \label{wj1}
r(\nu,\mu) \Def
{\lambda^{-|\nu|\vee |\mu|+1}}
\end{equation}

We are now going to define a family of coupled processes on the subtrees of $\mathcal{T}$. For any rooted subtree $\mathcal{T}'$ of $\mathcal{T}$, we define the {\it extension} $\widetilde{\bf X}^{ (\mathcal{T}')}=(V',E')$  on $\mathcal{T}'$ as follows. Let   the root $\r'$ of $\mathcal{T}'$ be defined as the vertex of $V'$ with smallest distance to $\r$.
For  a collection of nonnegative integers $\bar{k}=(k_\mu)_{\mu: [\nu,\mu]\in E'} $, let 
\[
A^{ (\mathcal{T}')}_{\bar{k},n,\nu}=\{\widetilde{X}^{ (\mathcal{T}')}_n = \nu\}\cap\bigcap_{\mu: [\nu,\mu]\in E'} \{\#\{1\le j \le n \colon (\widetilde{X}^{ (\mathcal{T}')}_{j-1},\widetilde{X}^{ (\mathcal{T}')}_j) = (\nu,\mu)\} = k_\mu\}.
\]
Note that the event $A^{ (\mathcal{T}')}_{\bar{k},n,\nu}$ deals with jumps along oriented edges.\\
Set $\widetilde{X}^{ (\mathcal{T}')}_0=\r'$ and, for $\nu$, $\nu'$ such that $[\nu, \nu']\in E'$ and for $n\ge0$, on the event 
\begin{align}\label{ursula}
A^{ (\mathcal{T}')}_{\bar{k},n,\nu}\cap \left\{\nu' = \argmin_{\mu: [\nu,\mu]\in E'}\Big\{\sum_{i=0}^{k_{\mu}}\frac{Y(\nu, \mu, i)}{r(\nu, \mu)} \Big\}\right\}, 
\end{align}
 we set $\widetilde{X}^{ (\mathcal{T}')}_{n+1} = \nu'$, where the function $r$ is defined in \eqref{wj1} and the clocks $Y$'s are from the same collection ${\bf Y}$ fixed in \eqref{defY}.\\
 
Thus, this defines $\widetilde{\bf X}^{(\mathcal{T})}$ as the extension on the whole tree.
  It is easy to check, from properties of independent exponential and Gamma  random variables, the memoryless property and Remark \ref{remarkgamma}, that this provides a construction of  $\widetilde{\bf X}$ on the tree $\mathcal{T}$.\\
This continuous-time embedding is classical: it is called {\it Rubin's construction}, after Herman Rubin  {(see the Appendix in  \cite{Dav90})}.\\
Now, if we consider proper subtrees $\mathcal{T}'$ of $\mathcal{T}$, one can check that, with these definitions, the steps of $\widetilde{\bf X}$ on the subtree $\mathcal{T}'$ are given by the steps of $\widetilde{\bf X}^{ (\mathcal{T}')}$ (see \cite{CHK} for details). As it was noticed in \cite{CHK}, for two subtrees $\mathcal{T}'$ and $\mathcal{T}''$ whose edge sets are disjoint, the extensions $\widetilde{\bf X}^{ (\mathcal{T}')}$ and $\widetilde{\bf X}^{ (\mathcal{T}'')}$ are independent as they are defined by two disjoint sub-collections of ${\bf Y}$.

Of particular interest will be the case where $\mathcal{T}'=[\r,\nu]$ is the unique self-avoiding path connecting $\r$ to $\nu$, for some $\nu\in\mathcal{T}$.  In this case, we write $\widetilde{\bf X}^{(\nu)}$ instead of $\widetilde{\bf X}^{([\r,\nu])}$, and we denote $T^{(\nu)}(\cdot)$ the return times associated to $\widetilde{\bf X}^{(\nu)}$. For simplicity, we will also write  $\widetilde{\bf X}^{(e)}$ and $T^{(e)}(\cdot)$ instead of $\widetilde{\bf X}^{(e^+)}$ and $T^{(e^+)}(\cdot)$ for $e\in E$.
Finally, it should be noted that, for any $e\in E$ and any $g\le e$,
\begin{align}\label{eqhit1}
\psi_{M, \lambda}(g)&=\bP\left(T^{(e)}(g^+)\circ \theta_{T^{(e)}(g^-)}<T^{(e)}({\r})\circ \theta_{T^{(e)}(g^-)}\right),\\ \label{eqhit}
\Psi_{M, \lambda}(e)&=\bP\left(T^{(e)}(e^+)<T^{(e)}({\r})\right),
\end{align}
where $\theta$ is the canonical shift on the trajectories.

\begin{remark} \label{diggonce}
Note that, for any vertex $\nu$, only the clocks $Y(\nu,\mu,0)$ with $\mu\sim\nu$, $\nu<\mu$, have a particular law. They follow a Gamma distribution instead of following an Exponential distribution. This resembles what would happen for a once-reinforced random walk (see \cite{CKS}). In this case, these clocks would still have an Exponential distribution but with a different parameter than the other ones (related to the reinforcement).\\
This means that an $M$-DRW$_\lambda$ is, in nature, very close to a once-reinforced random walk.
\end{remark}

%
%

\section{Proof of Theorem \ref{maintheorem}}\label{sect_thdigg}

In this section, we follow the blueprint of Section 7 of \cite{CKS}. In order to prove transience, the idea is to interpret the set of edges crossed before returning to $\r$ as the open edges in a certain correlated percolation.\\
A key step is to prove that this correlated percolation is {\it quasi-independent}, which will allow us to conclude its super-criticality from the super-criticality of some independent percolation.\\
Note that we will prove the transience of $\widetilde{\bf X}$ which is equivalent to the transience of ${\bf X}$.

\subsection{Link with percolation}
\label{linkwithperco}

Denote by $C(\r)$ the set of edges which are {crossed} by $\widetilde{\bf X}$ before returning to $\r$, that is: 
\begin{equation}
\mathcal C(\r)=\{e\in E: T(e^+)<T(\r) \}.
\end{equation}

This  set  can  be  seen  as  the  cluster  containing $\r$ in  some  correlated  percolation.  Next, we consider a different correlated percolation which will be more
convenient to us.  Recall Rubin'��s construction and the extensions introduced in Section~\ref{rubin}.  We define:
\begin{equation}
\mathcal C_{CP}(\r)=\{e\in E: T^{(e)}(e^+)<T^{(e)}(\r)\}.
\end{equation}

This defines a correlated percolation in which an edge $e\in E$ is open if $e\in \mathcal C_{CP}(\r)$.

\begin{lemma}
\label{lemma1}
We have that
\begin{equation}
\mathbb{P}(T(\r)=\infty)=\mathbb{P}(|\mathcal{C}(\r)|=\infty)=\mathbb{P}(|\mathcal{C}_{CP}(\r)|=\infty).
\end{equation}
\end{lemma}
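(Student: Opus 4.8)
The plan is to prove the two equalities separately, starting with the one that is essentially a tautology given the preceding setup, and then handling the genuinely substantive one. For the first equality $\mathbb{P}(T(\r)=\infty)=\mathbb{P}(|\mathcal{C}(\r)|=\infty)$, the key observation is that $\widetilde{\bf X}$ is a nearest-neighbor walk on the locally finite tree $\mathcal{T}$, so $\{T(\r)=\infty\}$ is exactly the event that $\widetilde{\bf X}$ never returns to $\r$. On this event, the trajectory is an infinite path in $\mathcal{T}$ that, after its last visit to $\r$ (there is a last visit, possibly time $0$), stays in one subtree hanging off $\r$ and is transient there, hence crosses infinitely many distinct edges, each of which lies in $\mathcal{C}(\r)=\{e: T(e^+)<T(\r)\}$; so $\{T(\r)=\infty\}\subset\{|\mathcal{C}(\r)|=\infty\}$. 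Conversely, if $\mathcal{C}(\r)$ is infinite then, since $\mathcal{T}$ is locally finite, by König's lemma $\mathcal{C}(\r)$ contains an infinite self-avoiding ray from $\r$; every initial segment $e$ of that ray satisfies $T(e^+)<T(\r)$, and I would argue that this forces $T(\r)=\infty$ — if the walk returned to $\r$ at some finite time $t$, only finitely many edges $e$ can have $T(e^+)<t$ because $\widetilde{\bf X}$ makes finitely many steps by time $t$, contradicting the existence of arbitrarily deep edges $e$ on the ray with $T(e^+)<T(\r)$. (One should be slightly careful with the convention $T(\r)=\inf\{k\ge 1:\widetilde X_k=\r\}$, but since $\widetilde X_1\ne\r$ almost surely this does not affect the argument.) This gives the first equality.

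For the second equality $\mathbb{P}(|\mathcal{C}(\r)|=\infty)=\mathbb{P}(|\mathcal{C}_{CP}(\r)|=\infty)$, the idea is to compare the true walk $\widetilde{\bf X}$ with the extensions $\widetilde{\bf X}^{(e)}$ on the paths $[\r,e^+]$ coming from Rubin's construction of Section~\ref{rubin}. The crucial structural fact, recorded in that section, is that for any proper subtree $\mathcal{T}'$ the restriction of $\widetilde{\bf X}$ to $\mathcal{T}'$ agrees with $\widetilde{\bf X}^{(\mathcal{T}')}$; in particular, on the event that $\widetilde{\bf X}$ crosses $e$ before returning to $\r$, the walk $\widetilde{\bf X}$ restricted to the path $[\r,e^+]$ — up to the first time it reaches $e^+$ or returns to $\r$ — coincides with $\widetilde{\bf X}^{(e)}$ restricted the same way. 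I would use this to show the edgewise implication: I claim $\{e\in\mathcal{C}(\r)\}$ and $\{e\in\mathcal{C}_{CP}(\r)\}$ differ only in ways that do not affect the infiniteness of the clusters. More precisely, the plan is to prove that $\mathcal{C}_{CP}(\r)$ contains a self-avoiding ray from $\r$ if and only if $\mathcal{C}(\r)$ does. Note $\mathcal{C}_{CP}(\r)$ need not be connected, so one cannot directly apply König's lemma to it; instead I would show: (i) if $e=[\r=\nu_0,\nu_1,\dots,\nu_k=e^+]$ is a path with every prefix edge $g\le e$ lying in $\mathcal{C}_{CP}(\r)$, then in fact $e\in\mathcal{C}(\r)$ — because the restriction of $\widetilde{\bf X}^{(e)}$ to $[\r,e^+]$ equals the restriction of $\widetilde{\bf X}$, and having $T^{(g)}(g^+)<T^{(g)}(\r)$ for all prefixes is precisely the condition for the walk $\widetilde{\bf X}^{(e)}$ on the single path $[\r,e^+]$ to reach $e^+$ before returning to $\r$ (a one-dimensional walk on $[\r,e^+]$ reaches the far end iff it crosses every edge in order before going back), which by the agreement of restrictions means $T(e^+)<T(\r)$; and (ii) conversely, if $e\in\mathcal{C}(\r)$ then every prefix edge $g\le e$ is crossed before $\widetilde{\bf X}$ returns to $\r$, so $g\in\mathcal{C}(\r)$, and moreover by restricting to $[\r,g^+]$ and using the same agreement one gets $g\in\mathcal{C}_{CP}(\r)$. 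Combining (i) and (ii): $\mathcal{C}(\r)$ is exactly the set of edges $e$ all of whose prefixes lie in $\mathcal{C}_{CP}(\r)$, i.e. $\mathcal{C}(\r)$ is the "hereditary closure toward the root'' of $\mathcal{C}_{CP}(\r)$. Hence $\mathcal{C}(\r)$ is infinite iff $\mathcal{C}_{CP}(\r)$ contains arbitrarily deep edges whose prefixes are all in $\mathcal{C}_{CP}(\r)$, which by König's lemma (applied to the locally finite tree of such "good'' edges) is iff there is an infinite ray all of whose edges are in $\mathcal{C}_{CP}(\r)$, which is iff $\mathcal{C}_{CP}(\r)$ is infinite (here the easy direction "$\mathcal{C}(\r)$ infinite $\Rightarrow$ $\mathcal{C}_{CP}(\r)$ infinite'' uses that $\mathcal{C}(\r)\subset$ "prefixes are in $\mathcal{C}_{CP}$'', which by (ii) forces infinitely many edges of $\mathcal{C}_{CP}(\r)$).

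I expect the main obstacle to be step (i): carefully justifying that the condition "$T^{(g)}(g^+)<T^{(g)}(\r)$ for every prefix $g\le e$'' is equivalent to "$T^{(e)}(e^+)<T^{(e)}(\r)$'', and that the latter event is the same as $\{T(e^+)<T(\r)\}$. This requires being precise about what "the restriction of $\widetilde{\bf X}$ to a subtree agrees with the extension'' means for the path subtrees $[\r,g^+]\subset[\r,e^+]$ (these are nested, sharing edges, so one must invoke that both are read off the same clock family ${\bf Y}$ and that the extension on a subpath is itself a restriction of the extension on the longer path), together with the elementary but slightly fiddly fact that a walk on a finite path reaches the far endpoint before returning to the root precisely when it crosses each edge of the path for the first time in increasing order of generation before any return to the root — equivalently, when each successive prefix edge's first-crossing time precedes the first return time. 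Everything else — König's lemma, the last-visit decomposition for the first equality, the local finiteness bookkeeping — is routine.
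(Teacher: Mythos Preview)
Your plan for the first equality is essentially fine, though you should note that the inclusion $\{T(\r)=\infty\}\subset\{|\mathcal C(\r)|=\infty\}$ needs the (almost-sure) fact that $\widetilde{\bf X}$ cannot remain in a finite set forever without returning to~$\r$; this is specific to the dynamics and not a purely combinatorial consequence of being nearest-neighbor.

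There is, however, a genuine gap in your argument for the second equality. Your step (i) asserts that if every $g\le e$ lies in $\mathcal C_{CP}(\r)$ then $e\in\mathcal C(\r)$; your justification is that $T^{(e)}(e^+)<T^{(e)}(\r)$, together with the restriction/extension agreement, forces $T(e^+)<T(\r)$. This implication is false when $\r$ has more than one child. The extension $\widetilde{\bf X}^{(e)}$ lives on the single path $[\r,e^+]$, so for it ``returning to $\r$'' means crossing the oriented edge $(a,\r)$ where $a$ is the unique child of $\r$ on that path. The full walk $\widetilde{\bf X}$, by contrast, may return to $\r$ via a \emph{different} child $a'$. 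Concretely: if $\widetilde X_1=a'\neq a$ and the walk comes back to $\r$ from the subtree of $a'$, then $T(\r)<\infty$ while no edge of $[\r,e^+]$ has yet been crossed; the extension $\widetilde{\bf X}^{(e)}$ is oblivious to this and may well satisfy $T^{(e)}(e^+)<T^{(e)}(\r)$. Thus $e\in\mathcal C_{CP}(\r)\setminus\mathcal C(\r)$, and in fact $\{|\mathcal C_{CP}(\r)|=\infty\}$ can occur (via an infinite ray in the subtree of $a$) while $\{|\mathcal C(\r)|=\infty\}$ fails. The two events are \emph{not} equal in general; only their probabilities can be compared, and your argument does not address that.

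Two smaller points: first, $\mathcal C_{CP}(\r)$ \emph{is} connected (if $e\in\mathcal C_{CP}(\r)$ and $g<e$, then $\widetilde{\bf X}^{(g)}$ is the restriction of $\widetilde{\bf X}^{(e)}$ to $[\r,g^+]$, and the latter reaches $g^+$ before $\r$ since it reaches $e^+$ before $\r$), so your hedge about non-connectedness is unnecessary and your (i) is actually equivalent to the false claim $\mathcal C_{CP}(\r)\subset\mathcal C(\r)$. Second, the paper itself does not give a proof here but refers to Lemma~11 of \cite{CKS}; the argument there handles the multiple-children issue differently from what you propose, and you would need to consult it (or reduce to the case where $\r$ has a single child, e.g.\ by adjoining an artificial parent) to close the gap.
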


\begin{proof}
We can follow line by line the proof of Lemma 11 in \cite{CKS}, except that one should replace  ${\bf X}$ by $\widetilde{\bf X}$.
\end{proof}

\subsection{Recurrence in Theorem \ref{maintheorem}: The case $RT(\mathcal{T},\psi_{M, \lambda})<1$}

The following result states the recurrence in Theorem \ref{maintheorem}.

\begin{proposition}[Proof of recurrence in Theorem \ref{maintheorem}: the case $RT(\mathcal{T},\psi_{M,\lambda})<1$]
If $RT(\mathcal{T},\psi_{M,\lambda})<1$ then ${\bf X}$ is recurrent.
\end{proposition}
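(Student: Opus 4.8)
The plan is to deduce recurrence from the percolation criterion already established in this paper, namely the first half of Proposition~\ref{propperc}, applied to the correlated percolation $\mathcal{C}_{CP}(\r)$ introduced just above. By Lemma~\ref{lemma1}, we have $\mathbb{P}(T(\r)=\infty)=\mathbb{P}(|\mathcal{C}_{CP}(\r)|=\infty)$, so it suffices to show that $\mathcal{C}_{CP}(\r)$ is finite almost surely. The percolation in which $e$ is open iff $e\in\mathcal{C}_{CP}(\r)$ has, by \eqref{eqhit1} (and the discussion of Rubin's construction in Section~\ref{rubin}), the property that for $e_1\in E$ with $|e_1|>1$, writing $e_0\sim e_1$, $e_0<e_1$,
\begin{equation*}
\mathbf{P}\left(e_1\in\mathcal{C}_{CP}(\r)\giv e_0\in\mathcal{C}_{CP}(\r)\right)=\psi_{M,\lambda}(e_1),
\end{equation*}
since the extension $\widetilde{\bf X}^{(e_1)}$ restricted to the path $[\r,e_1^+]$ is a nearest-neighbor Markov chain and $\psi_{M,\lambda}(g)$ is exactly the probability that, after hitting $g^-$, it reaches $g^+$ before returning to $\r$; edges at generation $1$ are open almost surely because $\psi_{M,\lambda}(e)=1$ there. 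Hence the hypothesis \eqref{gendefperco} of Proposition~\ref{propperc} holds with $\psi=\psi_{M,\lambda}$, and since $RT(\mathcal{T},\psi_{M,\lambda})<1$ by assumption, the first conclusion of Proposition~\ref{propperc} gives that $\mathcal{C}_{CP}(\r)$ is finite almost surely.

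The one point that needs to be checked carefully is the conditional probability identity displayed above: in the correlated percolation driven by Rubin's construction, the events $\{e\in\mathcal{C}_{CP}(\r)\}$ for different $e$ are defined via different extensions $\widetilde{\bf X}^{(e)}$, all coupled through the common clock family ${\bf Y}$. For a single fixed $e_1$, the event $\{e_1\in\mathcal{C}_{CP}(\r)\}=\{T^{(e_1)}(e_1^+)<T^{(e_1)}(\r)\}$ together with $\{e_0\in\mathcal{C}_{CP}(\r)\}=\{T^{(e_0)}(e_0^+)<T^{(e_0)}(\r)\}$ both concern walks on the path $[\r,e_1^+]$; here one uses that $\widetilde{\bf X}^{(e_0)}$ and $\widetilde{\bf X}^{(e_1)}$ agree as walks on $[\r,e_0^+]$ up to the first time $e_0^+$ is reached, so that conditioning on $\{e_0\in\mathcal{C}_{CP}(\r)\}$ and applying the strong Markov property at the hitting time of $e_0^+=e_1^-$ reduces the conditional probability to the one-step crossing probability $\psi_{M,\lambda}(e_1)$ given by \eqref{eqhit1}. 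This is the Markov-chain computation on the ray that underlies the definition of $\psi_{M,\lambda}$ in \eqref{psi}, so it is essentially bookkeeping once the coupling in Section~\ref{rubin} is taken for granted; quasi-independence is not needed for the recurrence direction, only for transience.

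The main (and really only) obstacle is therefore making the reduction of the last paragraph fully rigorous—i.e.\ verifying that $\mathcal{C}_{CP}(\r)$ genuinely satisfies \eqref{gendefperco} with $\psi=\psi_{M,\lambda}$ using the properties of the extensions—after which Proposition~\ref{propperc} does all the work. In the write-up I would state this verification as a short lemma (the percolation $\{e\in\mathcal{C}_{CP}(\r)\}$ satisfies the hypotheses of Proposition~\ref{propperc}, with the quasi-independence clause postponed to the transience section), prove it by the strong Markov argument above referencing \eqref{eqhit1}, and then conclude: $RT(\mathcal{T},\psi_{M,\lambda})<1$ implies $|\mathcal{C}_{CP}(\r)|<\infty$ a.s.\ by Proposition~\ref{propperc}, hence $\mathbb{P}(T(\r)=\infty)=0$ by Lemma~\ref{lemma1}, i.e.\ ${\bf X}$ is recurrent, which also covers $\widetilde{\bf X}$ since the two notions coincide by definition.
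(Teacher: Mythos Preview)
Your proposal is correct and takes exactly the paper's approach: the paper's own proof is the single line ``This follows directly from Lemma~\ref{lemma1} and Proposition~\ref{propperc},'' and you have simply spelled out the verification that the percolation $\mathcal{C}_{CP}(\r)$ satisfies hypothesis~\eqref{gendefperco} with $\psi=\psi_{M,\lambda}$.

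One small caution on phrasing: the extension $\widetilde{\bf X}^{(e_1)}$ on the ray is \emph{not} literally a Markov chain (the Gamma clocks $Y(\nu,\mu,0)$ encode the cookies, so the first outward crossing of each edge is special), so your appeal to the ``strong Markov property'' is loose. The clean way to obtain the conditional probability is the one implicit in the paper: use the coupling you describe to get $\{e_1\in\mathcal{C}_{CP}(\r)\}\subset\{e_0\in\mathcal{C}_{CP}(\r)\}$, and then compute the ratio $\Psi_{M,\lambda}(e_1)/\Psi_{M,\lambda}(e_0)=\psi_{M,\lambda}(e_1)$ directly from~\eqref{eqhit}. With that tweak your write-up is complete.
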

\begin{proof}
This follows directly from Lemma \ref{lemma1} and Proposition \ref{propperc}.
\end{proof}

\subsection{Transience in Theorem \ref{maintheorem}: The case $RT(\mathcal{T},\psi_{M,\lambda})>1$}

Now, we want to prove the transience in Theorem \ref{maintheorem}. For this purpose, we need to check that the assumptions in Proposition \ref{propperc} are satisfied.\\

For simplicity, for a vertex $v\in V$, we write $v \in\C_{\mathrm{CP}}(\r)$ if one of the edges incident to $v$ is in $\C_{\mathrm{CP}}(\r)$. Besides, recall that for two edges $e_1$ and $e_2$, their common ancestor  with highest generation is the vertex denoted $e_1\wedge e_2$.

\begin{lemma}
\label{lemma2}
Assume that the condition \eqref{condition1} holds with some constant $M$. Then the correlated percolation induced by $\mathcal{C}_{CP}$ is quasi-independent, as defined in Definition \ref{defquasi}.
\end{lemma}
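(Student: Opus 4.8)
The plan is to verify the quasi-independence inequality \eqref{qindep0} directly, using the independence properties of Rubin's construction and the explicit values of the hitting probabilities $\psi_{M,\lambda}$ and $\Psi_{M,\lambda}$ recorded in \eqref{eqhit1}--\eqref{eqhit}. Fix two edges $e_1,e_2\in E$ with common ancestor $w=e_1\wedge e_2$, and let $e$ be the edge with $e^+=w$ (if $w=\r$ the statement is easier, since $\psi$-products starting at the root factor trivially; so assume $|w|\ge1$). The key structural observation is that, by definition, $\{e_i\in\C_{CP}(\r)\}=\{T^{(e_i)}(e_i^+)<T^{(e_i)}(\r)\}$ is measurable with respect to the sub-collection of clocks ${\bf Y}$ attached to the path $[\r,e_i^+]$. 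These two paths share the common segment $[\r,w]$ and are otherwise edge-disjoint; hence, conditionally on the trajectory of the walk restricted to $[\r,w]$, the two events become independent. So the natural route is: (i) condition on $w\in\C_{CP}(\r)$, i.e.\ on $[\r,w]$ being crossed; (ii) decompose the event $\{e_i\in\C_{CP}(\r)\}$ as ``reach $w$ before $\r$'' followed by ``from $w$, reach $e_i^+$ before returning to $\r$, in the walk restricted to $[\r,e_i^+]$''; (iii) use the strong Markov property at the hitting time of $w$ together with the edge-disjointness of the two branches below $w$.

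Concretely, I would write, for $i=1,2$,
\[
\bP\big(e_i\in\C_{CP}(\r)\mid w\in\C_{CP}(\r)\big)=\bP\big(T^{(e_i)}(e_i^+)\circ\theta_{T^{(e_i)}(w)}<T^{(e_i)}(\r)\circ\theta_{T^{(e_i)}(w)}\mid T^{(e_i)}(w)<T^{(e_i)}(\r)\big),
\]
and observe that, because the walk restricted to $[\r,e_i^+]$ below $w$ only uses clocks on edges strictly below $w$, this conditional probability equals $\prod_{w<g\le e_i^+}\psi_{M,\lambda}(g)$, which is $\Psi_{M,\lambda}(e_i)/\Psi_{M,\lambda}(e)$. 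The same computation applied to the pair gives, via the strong Markov property at $T(w)$ and the independence of the two edge-disjoint branches $[\w,e_1^+]$ and $[\w,e_2^+]$ under $\bP$,
\[
\bP\big(e_1,e_2\in\C_{CP}(\r)\mid w\in\C_{CP}(\r)\big)=\Big(\prod_{w<g\le e_1^+}\psi_{M,\lambda}(g)\Big)\Big(\prod_{w<g\le e_2^+}\psi_{M,\lambda}(g)\Big)=\frac{\Psi_{M,\lambda}(e_1)\Psi_{M,\lambda}(e_2)}{\Psi_{M,\lambda}(e)^2}.
\]
Thus the product in \eqref{qindep0} holds with $C_Q=1$ — in fact with equality — provided the only subtlety is handled. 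That subtlety, and the place where condition \eqref{condition1} enters, is that after the walk first returns from below $w$ it may revisit $w$ and try again; each excursion into the branch of $e_1$ reshuffles the clocks, so one must check that the "reach $e_i^+$ before $\r$" event, accumulated over an unbounded number of such excursions, still factorizes. This is exactly the content of the extension machinery: $\widetilde{\bf X}^{(e_i)}$ is the \emph{true} restriction of $\widetilde{\bf X}$ to $[\r,e_i^+]$ using the full clock collection, so the decomposition above is an identity of extensions, not an approximation, and the needed independence is the disjoint-sub-collection independence noted at the end of Section~\ref{rubin}.

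The main obstacle I anticipate is not the algebra but the careful bookkeeping needed to show that $\{e_1,e_2\in\C_{CP}(\r)\}$ really does factor through a single conditioning on $\{w\in\C_{CP}(\r)\}$ once one works with the extensions $\widetilde{\bf X}^{(e_1)}$ and $\widetilde{\bf X}^{(e_2)}$ defined on \emph{different} subtrees rather than on one common subtree: strictly speaking $\widetilde{\bf X}^{(e_1)}$ and $\widetilde{\bf X}^{(e_2)}$ are coupled through the shared clocks on $[\r,w]$, so one must argue that conditioning on the common event $\{T^{(e_1\vee e_2)}(w)<T^{(e_1\vee e_2)}(\r)\}$ (equivalently on the $\sigma$-field generated by the clocks on $[\r,w]$ together with the walk's behaviour there) renders the two excursion families independent, with the role of \eqref{condition1} being to guarantee that the Gamma-distributed digging clocks have uniformly bounded parameter $M+1$, so that the constant $C_Q$ can be taken uniform over all pairs $e_1,e_2$. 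I expect this to follow line by line from the proof of the corresponding lemma (Lemma 12) in \cite{CKS}, with the once-reinforced reinforcement parameter replaced throughout by the Gamma$(m_\mu+1,1)$ clocks of Remark~\ref{remarkgamma}, and with \eqref{condition1} ensuring the uniformity; I would state this explicitly and carry out the substitution, flagging \eqref{eqhit1}--\eqref{eqhit} as the only model-specific input.
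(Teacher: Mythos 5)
Your central claim --- that conditionally on $\{e_1\wedge e_2\in\C_{CP}(\r)\}$ the two events $\{e_i\in\C_{CP}(\r)\}$ factorize exactly, so that \eqref{qindep0} holds with $C_Q=1$ --- is false, and the mechanism you invoke (strong Markov property at the hitting time of $w=e_1\wedge e_2$ plus edge-disjointness of the branches below $w$) does not address the actual source of correlation. Let $e$ be the edge with $e^+=w$. In Rubin's construction, on the event $\{e\in\C_{CP}(\r)\}$ the event $\{e_i\in\C_{CP}(\r)\}$ is the race $\{L(e)>L^*(e_i)\}$, where $L^*(e_i)$ is the clock time consumed on the oriented edge $(e^+,v_i)$ before the extension reaches $e_i^+$, and $L(e)$ is the clock time consumed on $(e^+,e^-)$ before the extension on $[\r,e^+]$ returns to $\r$. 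The three variables $L(e),L^*(e_1),L^*(e_2)$ are indeed mutually independent (disjoint clock collections), but the two events share the \emph{same} budget $L(e)$: since $L(e)$ is exponential of some parameter $p$ given the conditioning, one gets
\[
\bP\big(L(e)>x_1,\,L(e)>x_2\big)=e^{-p(x_1\vee x_2)},
\]
which exceeds $e^{-px_1}e^{-px_2}$ by the factor $e^{p(x_1\wedge x_2)}$, unbounded in $x_1\wedge x_2$. So the two events are genuinely positively correlated, your displayed identity cannot hold, and what must be proved (and is all that quasi-independence requires) is an upper bound by the product up to a \emph{uniform} constant.

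The paper obtains that constant as follows: bound $e^{-p(x_1\vee x_2)}\le e^{-p(x_1+x_2)/2}$, so that the double integral against the densities of $L^*(e_1)$ and $L^*(e_2)$ factorizes into two races against an independent exponential $\widetilde L(e)$ of parameter $p/2$; then compare $\bP(\widetilde L(e)>L^*(e_i))$ back to $\bP(L(e)>L^*(e_i))$. The ratio is a product over $e<g\le e_i$ of factors of the form $(1+\cdots)^{m_g+1}$, and it is exactly here --- not in any bookkeeping about reshuffled clocks over repeated excursions --- that condition \eqref{condition1} enters: the exponents are bounded by $M+1$ and the remaining sum telescopes, giving $\bP(\widetilde L(e)>L^*(e_i))\le e^{M+1}\,\bP(L(e)>L^*(e_i))$ and hence a constant of order $e^{2(M+1)}$ in \eqref{qindep0}. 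Your closing remark that the argument should follow Lemma 12 of \cite{CKS} points at the right template, but the body of your proposal, which rests on an exact conditional factorization, would fail at its key step if written out.
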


\begin{proof}
Here, we need to adapt the argument from the proof of Lemma 12 in \cite{CKS}.\\
Recall the construction of Section~\ref{rubin}. Note that if $e_1\wedge e_2=\varrho$, then the extensions on $[\varrho, e_1]$ and $[\varrho, e_2]$ are independent, then the conclusion of Lemma holds with $C=1$. 

Assume that $e_1\wedge e_2 \neq \varrho$, and  note  that  the  extensions  on $[\varrho, e_1]$ and $[\varrho, e_2]$ are dependent since they use the same  clocks on 
 $[\varrho, e_1\wedge e_2]$. Denote by $e$ the unique edge of $\mathcal{T}$ such that $e^+=e_1\wedge e_2$. We define the following quantities
 \begin{equation}
 \begin{split}
 N(e)&\Def\left|\left\{0\leq n \leq T^{(e)}(\varrho)\circ \theta_{T^{(e)}(e^+)}: (\widetilde{X}^{(e)}_n,\widetilde{X}^{(e)}_{n+1})=(e^+,e^-)\right\} \right|,\\
 L(e)&\Def \sum_{j=0}^{N(e)-1} \frac{Y(e^+,e^-,j)}{r(e^+,e^-)},
 \end{split}
 \end{equation}
 where $|A|$ denotes the cardinality of a set $A$ and $\theta$ is the canonical shift on trajectories. Note that $L(e)$ is the time consumed by the clocks attached to the oriented edge $(e^+,e^{-})$ before $\widetilde{\bf X}^{\ssup{e}}$, $\widetilde{X}^{\ssup{e_1}}$ or $\widetilde{X}^{\ssup{e_2}}$ goes back to $ {\r}$ once it has reached $e^+$. Recall that these three extensions are coupled and thus the time $L(e)$ is the same for the three of them.\\
 For $i\in \{1,2\}$, let $v_i$ be the vertex which is the offspring of $e^+$ lying the path from $\varrho$ to $e_i$. Note that $v_i$ could be equal to $e^+_i$. We define for $i\in \{1,2\}$: 
 \begin{equation}
 \begin{split}
 N^*(e_i)&=\left|\left\{0\leq n \leq T^{(e_i)}(e_i^+): (\widetilde{X}^{[e^+, e_i^+]}_n,\widetilde{X}^{[e^+,e_i^+]}_{n+1})=(e^+,v_i)\right\} \right|,\\
L^*(e_i)&=\sum_{j=0}^{N^*(e_i)-1}\frac{Y(e^+,e^-,j)}{r(e^+,e^-)}.
\end{split}
\end{equation}
 Here, $L^*(e_i)$, $i\in\{1,2\}$, is the time consumed by the clocks attached to the oriented edge $(e^+,v_i)$ before $\widetilde{\bf X}^{\ssup{e_i}}$, or $\widetilde{\bf X}^{[e^+,e_i^+]}$,   hits $e_i^+$.\\
Notice  that the three quantities $L(e)$, $L^*(e_1)$ and $L^*(e_2)$ are independent, and we also have: 
\begin{equation}
\{e_1, e_2\in \mathcal{C}_{CP}(\varrho)\}=\{T^{(e)}(e+)<T^{(e)}(\varrho)\} \cap \{L(e)>L^*(e_1)\} \cap \{L(e)>L^*(e_2)\}.
\end{equation}

 Now, conditioned on the event $\{T^{(e)}(e^+)<T^{(e)}(\varrho)\}$, the random variable $N(e)$ is simply a geometric random variable (counting the number of trials) with success probability $\lambda^{|e|-1}/\sum_{g\le e}\lambda^{|g|-1}$. The random variable $N(e)$ is independent of the family $Y(e^+,e^-,\cdot)$. As $Y(e^+,e^-,j)$ are independent exponential random variable for $j\ge0$, we then have that $L(e)$ is an exponential random variables with parameter 
 \begin{equation}\label{paramp}
 p:=\frac{\lambda^{|e|-1}}{\sum_{g\le e}\lambda^{|g|-1}}\times\lambda^{-|e|+1}=\frac{1}{\sum_{g\le e}\lambda^{|g|-1}}.
 \end{equation}
 A priori, $L^{*}(e_1)$ and $L^{*}(e_2)$ are not exponential random variable, but they have a continuous distribution. Denote $f_1$ and $f_2$ respectively the densities of $L^{*}(e_1)$ and $L^{*}(e_2)$. Then, we have that
 \begin{equation}
 \begin{split}
 \mathbb{P}\left(\left.e_1, e_2\in \mathcal C_{CP}(\varrho)\right|e_1\wedge e_2 \in \mathcal C_{CP}(\varrho)\right)&=\mathbb{P}\left(L(e)>L^*(e_1)\vee L^*(e_2)\right)\\
&={\int_{0}^{+\infty}\int_{0}^{+\infty}\int_{x_1\vee x_2}^{+\infty}p\, e^{-pt}f_1(x_1)f_2(x_2)dtdx_1dx_2}\\
&= \int_{0}^{+\infty}\int_{0}^{+\infty} e^{{-p}(x_1\vee x_2)}f_1(x_1)f_2(x_2)dx_1dx_2.\\
&\leq \int_{0}^{+\infty}\int_{0}^{+\infty} e^{\frac{-p}{2}(x_1+x_2)}f_1(x_1)f_2(x_2)dx_1dx_2.
 \end{split}
 \end{equation}
Thus, one can write
 \begin{equation}\label{qindep}
 \begin{aligned}
&  \mathbb{P}\left(\left.e_1, e_2\in \mathcal C_{CP}(\varrho)\right|e_1\wedge e_2 \in \mathcal C_{CP}(\varrho)\right)\\
&\leq \left (\int_{0}^{+\infty} e^{-px_1/2}f_1(x_1)dx_1\right )\cdot\left (\int_{0}^{+\infty} e^{-px_2/2}f_2(x_2)dx_2\right ).
\end{aligned}
 \end{equation}
 Note that, {for $i \in \{1, 2\}$},
 \begin{equation}
 \int_{0}^{+\infty} e^{-px_i/2}f_i(x_i)dx_i=\mathbb{P}\left(\widetilde{L}(e)>L^*(e_i)\right),
 \end{equation}
 where $\widetilde{L}(e)$ is an exponential variable with parameter $p/2$. Note that, in view of \eqref{paramp}, $\widetilde{L}(e)$ has the same law as $L(e)$ when we replace the weight of an edge $g'$ by $\lambda^{-|g'|+1}/2$ for $g'\leq e$ only, and keep the other weights the same. \\
 For $g\in E$ such that $e<g$, define the function $\widetilde{\psi}$ in a similar way as $\psi$, except that we replace the weight of an edge $g'$ by $\lambda^{-|g'|+1}/2$ for $g'\leq e$ only, and keep the other weights the same, that is, for $g\in E$, $e<g$,
$$
 \widetilde{\psi}_{{M,\lambda}}(g)=\left(\frac{2p^{-1}+\sum_{\nu:e<g'<g}\lambda^{|g'|-1}}{2p^{-1}+\sum_{\nu:e<g'\le g}\lambda^{|g'|-1}}\right)^{m_g+1}
$$
 We obtain: 
 \begin{equation}
 \begin{aligned}
& \mathbb{P}(\widetilde{L}(e)>L^*(e_1))=\prod_{g:e<g\leq e_1}\widetilde{\psi}(g)=\prod_{g:e<g\leq e_1}\left(\frac{2p^{-1}+\sum_{g':e<g'<g}\lambda^{|g'|-1}}{2p^{-1}+\sum_{g':e<g'\le g}\lambda^{|g'|-1}}\right)^{m_g+1}\\
  &= \mathbb{P}({L}(e)>L^*(e_1))\times\prod_{g:e<g\leq e_1}\left(1+\frac{p^{-1}}{p^{-1}+\sum_{g':e<g'< g}\lambda^{|g'|-1}}\right)^{m_g+1}\\
  &\qquad\qquad\qquad\qquad\qquad\times\left( 1-\frac{p^{-1}}{2p^{-1}+\sum_{g':e<g'\le g}\lambda^{|g'|-1}}\right)^{m_g+1}\\
   &= \mathbb{P}({L}(e)>L^*(e_1))\\
   &\qquad\times\prod_{g:e<g\leq e_1}\left(1+\frac{p^{-1}\lambda^{|g|-1}}{\left(p^{-1}+\sum_{g':e<g'< g}\lambda^{|g'|-1}\right)\left(2p^{-1}+\sum_{g':e<g'\le g}\lambda^{|g'|-1}\right)}\right)^{m_g+1}
   \end{aligned}
   \end{equation} 
   Hence,
   \begin{equation}
 \begin{aligned}
&\mathbb{P}(\widetilde{L}(e)>L^*(e_1))\\
 &\le \mathbb{P}({L}(e)>L^*(e_1))\\
   &\qquad\times\exp\left[(M+1)\sum_{g:e<g\leq e_1}\left(\frac{p^{-1}\lambda^{|g|-1}}{\left(p^{-1}+\sum_{g':e<g'< g}\lambda^{|g'|-1}\right)\left(p^{-1}+\sum_{g':e<g'\le g}\lambda^{|g'|-1}\right)}\right)\right]\\
   &\le \mathbb{P}({L}(e)>L^*(e_1))\exp\left[(M+1)\sum_{g:e<g\leq e_1}\left(\frac{p^{-1}\lambda^{|g|-1}}{\left(\sum_{g':g'< g}\lambda^{|g'|-1}\right)\left(\sum_{g':g'\le g}\lambda^{|g'|-1}\right)}\right)\right]\\
      &\le \mathbb{P}({L}(e)>L^*(e_1))\exp\left[(M+1)p^{-1}\sum_{g:e<g\leq e_1}\left(\frac{\sum_{g':g'\le g}\lambda^{|g'|-1}-\sum_{g':g'< g}\lambda^{|g'|-1}}{\left(\sum_{g':g'< g}\lambda^{|g'|-1}\right)\left(\sum_{g':g'\le g}\lambda^{|g'|-1}\right)}\right)\right]\\
         &\le \mathbb{P}({L}(e)>L^*(e_1))\exp\left[(M+1)p^{-1}\sum_{g:e<g\leq e_1}\left(\frac{1}{\sum_{g':g'< g}\lambda^{|g'|-1}}-\frac{1}{\sum_{g':g'\le g}\lambda^{|g'|-1}}\right)\right]\\
            &\le \mathbb{P}({L}(e)>L^*(e_1))\exp\left[(M+1)p^{-1}\left(\frac{1}{\sum_{g':g'\le e}\lambda^{|g'|-1}}-\frac{1}{\sum_{g':g'\le e_1}\lambda^{|g'|-1}}\right)\right]\\
     &\le\exp(M+1)\times \mathbb{P}({L}(e)>L^*(e_1)),
 \end{aligned}
 \end{equation}
 where we used condition \eqref{condition1}, the fact that we have a telescopic sum and where we used the definition \eqref{paramp} of $p$.\\
We have just proved that
\begin{equation}\label{qindep1}
\int_{0}^{+\infty} e^{-px_1/2}f_1(x_1)dx_1\leq {\exp\{M+1\}}\times \mathbb{P}(e_1\in \mathcal C_{CP}(\varrho)|e_1\wedge e_2 \in \mathcal C_{CP}(\varrho)).
\end{equation} 
By doing a very similar computation, one can prove that
\begin{equation}\label{qindep2}
\int_{0}^{+\infty} e^{-px_2/2}f_1(x_2)dx_2\leq  {\exp\{M+1\}}\times \mathbb{P}(e_2\in \mathcal C_{CP}(\varrho)|e_1\wedge e_2 \in \mathcal C_{CP}(\varrho)).
\end{equation}
The conclusion \eqref{qindep0} follows by using \eqref{qindep} together with \eqref{qindep1} and \eqref{qindep2}.
\end{proof}

\begin{proof}[Proof of transience in Theorem \ref{maintheorem}: The case $RT(\mathcal{T},\psi_{M,\lambda})>1$]
This follows directly from Lemma \ref{lemma1}, Lemma~\ref{lemma2} and Proposition  \ref{propperc}.
\end{proof}

\appendix

\section{Proof of Proposition \ref{propperc}}\label{appendix}

As above, we define a function $\Psi$ on the set of edges such that, for $e\in E$, 
\begin{equation}
\Psi(e)=\prod_{g\le e}\psi(e).
\end{equation}
By \eqref{gendefperco}, we have that \begin{equation}
\mathbb{P}\left[e\in\mathcal{C}(\r)\right]=\Psi(e).
\end{equation}
\subsection{Proof of Proposition~\ref{propperc} in the case $RT(\mathcal{T},\psi)<1$}

\begin{proposition}
If $RT(\mathcal{T},\psi)<1$, then a percolation such that \eqref{gendefperco} holds is subcritical. 
\end{proposition}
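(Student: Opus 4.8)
The plan is to show that when $RT(\mathcal{T},\psi)<1$, the expected size of the cluster $\mathcal{C}(\r)$ is finite, which forces $\mathcal{C}(\r)$ to be finite almost surely. The key identity is that, by \eqref{gendefperco} and the tower property, $\mathbb{P}[e\in\mathcal{C}(\r)]=\Psi(e)=\prod_{g\le e}\psi(g)$, so that
\[
\mathbb{E}\big[|\mathcal{C}(\r)|\big]=\sum_{e\in E}\mathbb{P}[e\in\mathcal{C}(\r)]=\sum_{e\in E}\Psi(e).
\]
Thus it suffices to prove that $RT(\mathcal{T},\psi)<1$ implies $\sum_{e\in E}\Psi(e)<\infty$.

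For this, first fix $\gamma\in(RT(\mathcal{T},\psi),1)$; by the definition \eqref{equ:RT} of $RT$, we have $\inf_{\pi\in\Pi}\sum_{e\in\pi}\Psi(e)^\gamma=0$. The idea is a standard "peeling by cutsets" argument: since the infimum is zero, one can extract a sequence of cutsets $(\pi_n)_{n\ge1}$ with $\sum_{e\in\pi_n}\Psi(e)^\gamma\le 2^{-n}$, and one may take them to be nested and escaping to infinity (replacing $\pi_n$ by a cutset that lies entirely beyond $\pi_1\cup\cdots\cup\pi_{n-1}$ and beyond generation $n$, which only decreases the sum since $\Psi$ is non-increasing along rays and we may intersect/refine). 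Every edge $e\in E$ lies "between" consecutive cutsets, so $E$ is partitioned (up to the finite part before $\pi_1$) into the blocks $B_n$ of edges separated from $\r$ by $\pi_n$ but not by $\pi_{n+1}$. On each such block, one controls $\sum_{e\in B_n}\Psi(e)$ in terms of $\sum_{g\in\pi_n}\Psi(g)$.

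The mechanism for the per-block bound is the submultiplicativity $\Psi(e)=\Psi(g)\prod_{g<h\le e}\psi(h)$ for $g\le e$ together with $\gamma<1$: writing $\Psi(e)=\Psi(e)^\gamma\cdot\Psi(e)^{1-\gamma}$ and using $\Psi(e)\le\Psi(g)\le 1$ (once we normalize $\psi\le 1$, or absorb the generation-1 edges which are finite in number) gives $\Psi(e)\le\Psi(g)^{1-\gamma}\,\Psi(e)^\gamma$. Hmm — more carefully, the cleanest route is: for $e$ with ancestor $g\in\pi_n$,
\[
\sum_{e\in B_n,\ e\ge g}\Psi(e)=\Psi(g)\sum_{e\ge g}\prod_{g<h\le e}\psi(h),
\]
and one bounds the inner sum over the subtree rooted at $g^+$, up to the next cutset, by comparison with $\sum_{f\in\pi_{n+1},\,f\ge g}(\cdots)$; iterating and using $\sum_{e\in\pi_{n+1}}\Psi(e)^\gamma\le 2^{-(n+1)}$ yields a geometric bound $\sum_{e\in B_n}\Psi(e)\le C\cdot 2^{-\gamma n}$ or similar. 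Summing over $n$ gives $\sum_{e\in E}\Psi(e)<\infty$. (This is exactly the structure of the corresponding argument in Section 8 of \cite{CKS}, which is why the authors say it can be followed line by line.)

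\textbf{The main obstacle} is the combinatorial bookkeeping in the per-block estimate: one must correctly relate a sum of $\Psi$ over all edges lying strictly between two nested cutsets to the value $\sum_{e\in\pi_n}\Psi(e)^\gamma$ on the outer cutset, exploiting $\gamma<1$ to convert the small quantity $\sum\Psi^\gamma$ into control of $\sum\Psi$ despite there being infinitely many edges in each block. The subtlety is that individual blocks can contain infinitely many edges (the subtree can be infinite), so the finiteness genuinely comes from $\Psi$ decaying fast enough along every ray — which is precisely what having a cutset beyond with tiny $\sum\Psi^\gamma$ encodes. Once this block estimate is in place, summing the geometric series and invoking $\mathbb{E}|\mathcal{C}(\r)|<\infty\Rightarrow|\mathcal{C}(\r)|<\infty$ a.s. completes the proof; since this mirrors \cite{CKS} verbatim, I would present the reduction to $\sum_{e}\Psi(e)<\infty$ in detail and then cite \cite{CKS} for the cutset-peeling estimate, or relegate it to the appendix as the authors do.
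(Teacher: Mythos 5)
Your strategy has a fatal flaw: the intermediate claim that $RT(\mathcal{T},\psi)<1$ implies $\sum_{e\in E}\Psi(e)<\infty$ is false, so the reduction to $\mathbb{E}\bigl[|\mathcal{C}(\r)|\bigr]<\infty$ cannot work. Take $\mathcal{T}$ to be a single ray with edges $e_1<e_2<\dots$ and $\psi(e_n)=\frac{n-1}{n}$ for $n>1$, so $\Psi(e_n)=\frac{1}{n}$. Here every cutset contains a single edge, $\inf_{\pi}\sum_{e\in\pi}\Psi(e)^{\gamma}=\inf_n n^{-\gamma}=0$ for every $\gamma>0$, hence $RT(\mathcal{T},\psi)=0<1$; yet $\sum_{e\in E}\Psi(e)=\sum_n \frac1n=\infty$. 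The percolation is of course still subcritical ($\mathbb{P}(e_n\in\mathcal{C}(\r))=\frac1n\to0$ and the events are nested), which shows that the conclusion is true but out of reach of your method: $\mathbb{E}|\mathcal{C}(\r)|=\infty$ is perfectly compatible with $|\mathcal{C}(\r)|<\infty$ a.s. No amount of cutset-peeling bookkeeping will rescue the block estimates, since in this example each block between consecutive cutsets is a single edge and the total sum still diverges.

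The correct first-moment argument is applied to the indicator of the cluster being infinite, not to its size. On the event $\{|\mathcal{C}(\r)|=\infty\}$ the cluster contains an infinite ray from $\r$, hence meets every cutset, so for each $\pi\in\Pi$
\[
\1_{\{|\mathcal{C}(\r)|=\infty\}}\le \sum_{e\in\pi}\1_{\{e\in\mathcal{C}(\r)\}},
\qquad\text{whence}\qquad
\mathbb{P}\bigl(|\mathcal{C}(\r)|=\infty\bigr)\le \sum_{e\in\pi}\Psi(e).
\]
Since $RT(\mathcal{T},\psi)<1$, the exponent $\gamma=1$ lies strictly above the supremum in \eqref{equ:RT}, so $\inf_{\pi\in\Pi}\sum_{e\in\pi}\Psi(e)=0$, and taking the infimum over $\pi$ in the display finishes the proof. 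Note that quasi-independence and the whole peeling machinery are only needed for the supercritical direction; the subcritical direction is this one-line union bound over a single cutset.
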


\begin{proof}
We use a first moment method.  For any cutset $\pi$, we have
$$\1_{\left\{|\mathcal{C}(\r)|=+\infty\right\}}\leq \sum_{e\in\pi}\1_{\left\{e\in \mathcal {C}(\r)\right\}} $$
and then 
$$\mathbb{P}\left[|\mathcal{C}(\r)|=+\infty\right]=\mathbb{E}\left[\1_{\left\{|\mathcal{C}(\r)|=+\infty\right\}}\right]\leq \sum_{e\in\pi}\mathbb{E}\left [\1_{\left\{e\in \mathcal {C}(\r)\right\}}\right]=\sum_{e\in\pi}\mathbb{P}\left[e\in \mathcal {C}(\r)\right] $$
Therefore 
$$\mathbb{P}\left[|\mathcal{C}(\r)|=+\infty\right]\leq \sum_{e\in \pi}\Psi(e).$$
Taking the infimum over $\pi\in \Pi$ allows to conclude that:
\begin{equation}
\label{equ:comparefirstmoment}
\mathbb{P}\left[|\mathcal{C}(\r)|=+\infty\right]\leq \inf_{\pi\in \Pi}\sum_{e\in \pi}\Psi(e).
\end{equation}

If $RT(\mathcal{T},\psi)<1$, the definition of $RT(\mathcal{T},\psi)$ (see \eqref{equ:RT}) implies that

 \begin{equation}
 \label{equ:infequa0}
\inf_{\pi\in \Pi}\sum_{e\in \pi}\Psi(e)=0
 \end{equation}
 We conclude the proof of proposition thanks to \eqref{equ:comparefirstmoment} and \eqref{equ:infequa0}.
\end{proof}

\bigskip
\subsection{Proof of  Proposition~\ref{propperc} in the case $RT(\mathcal{T},\psi)>1$}
As we are considering a quasi-independent percolation, we are able to lower-bound the probability of this correlated percolation to be infinite by the probability that some {\it independent} percolation is infinite. We do this by proving that a certain modified effective conductance is positive.\\

\begin{definition} \label{defmodcond1}
For any edge $e\in E$, let $c(e)=1$  if $|e|=1$ and, if $|e|>1$,   define the \emph{ adapted conductances}
\begin{align}\label{modcond}
{c}(e) = \frac 1{1- \psi(e)}   \Psi(e).
\end{align}
Define $\Ccal_{\rm eff}$ the effective conductance of $\mathcal{T}$ when  the conductance $c(e)$ is assigned to every edge $e\in E$.  For a definition of effective conductance, see \cite{LP} page 27.
\end{definition}


\begin{proposition}\label{proplowerboundperco}
Let $\mathcal{C}(\r)$ be the cluster of the root in a  percolation such that \eqref{gendefperco} holds. If the percolation is quasi-independent, then there exists $C_Q\in(0,\infty)$ such that
\[
\frac{1}{C_Q}\times\frac{\Ccal_{\rm eff}}{1+\Ccal_{\rm eff}} \le \bP(|\mathcal{C}( {\r})| = \infty). 
\]
\end{proposition}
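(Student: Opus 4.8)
The plan is to establish the lower bound on $\bP(|\Ccal(\r)| = \infty)$ via a second-moment argument applied to a suitable sequence of random variables supported on finite cutsets, in the spirit of the classical proof that a tree with positive effective conductance supports an infinite percolation cluster (as in \cite{LP}, Chapter 5). First I would fix an exhausting sequence of finite subtrees and, for each, consider the unit current flow $\theta$ from $\r$ to the boundary with respect to the adapted conductances $c(e)$ of Definition~\ref{defmodcond1}; by Thomson's principle, the energy $\sum_e \theta(e)^2/c(e)$ of this flow equals $1/\Ccal_{\rm eff}$ (or is bounded by it after passing to the limit). For an appropriate cutset $\pi$ at the boundary, define the random variable
\begin{equation}
Z_\pi = \sum_{e\in\pi} \frac{\theta(e)}{\Psi(e)}\,\1_{\{e\in\Ccal(\r)\}}.
\end{equation}

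The key computations are then the first and second moment estimates for $Z_\pi$. For the first moment, since $\bP(e\in\Ccal(\r)) = \Psi(e)$ by \eqref{gendefperco}, we get $\bE[Z_\pi] = \sum_{e\in\pi}\theta(e) = 1$ because $\theta$ is a unit flow and $\pi$ is a cutset. For the second moment, one expands
\begin{equation}
\bE[Z_\pi^2] = \sum_{e_1,e_2\in\pi} \frac{\theta(e_1)\theta(e_2)}{\Psi(e_1)\Psi(e_2)}\, \bP(e_1,e_2\in\Ccal(\r)),
\end{equation}
splits the sum according to the common ancestor $h = e_1\wedge e_2$, and uses the quasi-independence hypothesis \eqref{qindep0}: conditioning on $h\in\Ccal(\r)$ (equivalently on the edge below $h$ being open), one bounds $\bP(e_1,e_2\in\Ccal(\r)) \le C_Q\,\Psi(h_0)^{-1}\bP(e_1\in\Ccal(\r))\bP(e_2\in\Ccal(\r))$ where $h_0$ is the edge with $h_0^+ = h$, so that the ratio $\bP(e_1,e_2\in\Ccal(\r))/(\Psi(e_1)\Psi(e_2))$ is controlled by $C_Q/\Psi(h_0)$. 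Reorganizing the double sum as a sum over $h_0$ of the flow squared through $h_0$, and using $\sum_{e\in\pi,\ e>h_0}\theta(e) \le \theta(h_0)$ together with the definition $c(h_0) = \Psi(h_0)/(1-\psi(h_0))$, one obtains $\bE[Z_\pi^2] \le C_Q\big(1 + \sum_{h_0}\theta(h_0)^2/c(h_0)\big) = C_Q\big(1 + 1/\Ccal_{\rm eff}^{(\pi)}\big)$ up to the harmless $(1-\psi)^{-1}\ge 1$ factor, where $\Ccal_{\rm eff}^{(\pi)}$ is the effective conductance of the finite truncation.

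Then the Paley--Zygmund inequality gives $\bP(Z_\pi > 0) \ge \bE[Z_\pi]^2/\bE[Z_\pi^2] \ge \big(C_Q(1+1/\Ccal_{\rm eff})\big)^{-1} = \Ccal_{\rm eff}/\big(C_Q(1+\Ccal_{\rm eff})\big)$, uniformly in the truncation. Since $\{Z_\pi > 0\} \subseteq \{\pi \cap \Ccal(\r)\neq\varnothing\}$ and these events are decreasing along the exhaustion with intersection $\{|\Ccal(\r)| = \infty\}$, taking the limit yields the claimed bound. The main obstacle is the bookkeeping in the second-moment estimate: one must carefully match the sum over pairs $(e_1,e_2)$ with common ancestor $h_0$ against the flow energy through $h_0$, making sure the adapted conductance $c(e) = \Psi(e)/(1-\psi(e))$ is exactly the right normalization for the telescoping/flow argument to close, and handle the conditioning in \eqref{qindep0} (which conditions on an edge, not a vertex) consistently with the definition of $\Ccal(\r)$ via edges. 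Everything else — Thomson's principle, Paley--Zygmund, and the limiting argument over the exhaustion — is standard.
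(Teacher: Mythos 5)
Your proposal is correct and is, in substance, the same argument as the paper's: the paper proves this proposition by directly invoking the lower bound of Theorem 5.19 (p.~145) of \cite{LP}, and what you have written out is precisely the weighted second-moment (Paley--Zygmund) proof of that theorem, with the adapted conductances $c(e)=\Psi(e)/(1-\psi(e))$ arising from the telescoping of $1/\Psi(h_0)$ exactly as you indicate. No gap; you have simply made explicit the proof that the paper delegates to the reference.
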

\begin{proof}[Proof of Proposition \ref{proplowerboundperco}]
We can use the lower-bound  in Theorem  5.19 (page 145) of \cite{LP} to obtain the result.
\end{proof}

Recall that a flow $(\theta_e)$  on a tree is a nonnegative function on $E$ such that, for any $e\in E$, $\theta_e=\sum_{g\in E:g^-=e^+} \theta_g$. A flow is said to be a unit flow if moreover $\sum_{e:|e|=1}\theta_{e}=1$.\\
A usual technique in order to prove that some effective conductance is positive is to find a unit flow with finite energy. This is the content of the following statement, which is a simple consequence of classical results.

\begin{lemma}\label{simple}
Assume that \eqref{condition1} is satisfied.
Consider the tree $\mathcal{T}$ with the conductances defined in Definition \ref{defmodcond1} and assume that there exists a unit flow $(\theta_e)_{e\in E}$ on $\mathcal{T}$ from $ {\r}$ to infinity which has a finite energy, that is
\[
\sum_{e\in E}\frac{\left(\theta_e\right)^2}{c(e)}<\infty.
\]
Then, a quasi-independent percolation such that \eqref{gendefperco} holds is supercritical.
\end{lemma}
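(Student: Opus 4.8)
The plan is to combine the standard variational characterization of effective conductance with Proposition~\ref{proplowerboundperco}. Recall from Thomson's principle (see \cite{LP}, Chapter 2) that, for a network with conductances $(c(e))_{e\in E}$, the effective conductance $\Ccal_{\rm eff}$ from $\r$ to infinity is positive if and only if there exists a unit flow from $\r$ to infinity with finite energy $\sum_{e\in E}\theta_e^2/c(e)$; more precisely, $\Ccal_{\rm eff}=\big(\inf_\theta \sum_{e\in E}\theta_e^2/c(e)\big)^{-1}$, where the infimum runs over unit flows. Hence, under the hypothesis of the lemma, the given finite-energy unit flow $(\theta_e)_{e\in E}$ witnesses $\Ccal_{\rm eff}>0$.

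Once $\Ccal_{\rm eff}>0$ is established, the conclusion is essentially immediate: since $\eqref{condition1}$ is assumed, Proposition~\ref{proplowerboundperco} applies (its proof invokes the lower bound of Theorem~5.19 of \cite{LP}, which requires quasi-independence), and yields
\[
\bP(|\mathcal{C}(\r)|=\infty)\ \ge\ \frac{1}{C_Q}\cdot\frac{\Ccal_{\rm eff}}{1+\Ccal_{\rm eff}}\ >\ 0,
\]
because the map $x\mapsto x/(1+x)$ is strictly positive on $(0,\infty)$ and $C_Q\in(0,\infty)$. Therefore the cluster of the root is infinite with positive probability, i.e.\ the percolation is supercritical, which is exactly the assertion.

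Strictly speaking there is almost no obstacle here: the lemma is explicitly billed as ``a simple consequence of classical results'', and the only two ingredients are (i) Thomson's principle relating finite-energy unit flows to positivity of $\Ccal_{\rm eff}$, and (ii) Proposition~\ref{proplowerboundperco}, which has already been proved in the excerpt. The one point that deserves a word of care is making sure that condition~\eqref{condition1} is carried through: it is needed precisely so that Proposition~\ref{proplowerboundperco} (hence the quasi-independence machinery behind Theorem~5.19 of \cite{LP}) is available, and it is part of the hypothesis of Lemma~\ref{simple}, so nothing extra is required. I would therefore write the proof in three short sentences: quote Thomson's principle to get $\Ccal_{\rm eff}>0$ from the assumed flow, quote Proposition~\ref{proplowerboundperco} to get $\bP(|\mathcal{C}(\r)|=\infty)>0$, and conclude supercriticality.
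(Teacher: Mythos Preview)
Your proof is correct and follows essentially the same route as the paper: invoke Thomson's principle (the paper cites it as Theorem~2.11 of \cite{LP}) to deduce $\Ccal_{\rm eff}>0$ from the finite-energy unit flow, then apply Proposition~\ref{proplowerboundperco} to conclude supercriticality. One small remark: Proposition~\ref{proplowerboundperco} itself only requires quasi-independence, not condition~\eqref{condition1}; the latter hypothesis in Lemma~\ref{simple} appears to be vestigial (it is used elsewhere, in Lemma~\ref{lemma2}, to \emph{establish} quasi-independence for the specific percolation $\Ccal_{\mathrm{CP}}$), so your attribution of its role is slightly off, but this does not affect the validity of your argument.
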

\begin{proof}
Using Proposition \ref{proplowerboundperco},  if $\C_{\rm eff}>0$ then  a quasi-independent percolation such that \eqref{gendefperco} holds is supercritical. By Theorem 2.11 (page 39) of \cite{LP},  $\C_{\rm eff}>0$ if and only if there exists a unit flow $(\theta_e)_{e\in E}$ on $\mathcal{T}$ from $ {\r}$ to infinity which has a finite energy.
\hfill
\end{proof}

The following result, from \cite{CKS}, is inspired by Corollary 4.2 of R.~Lyons \cite{L90}, which is itself a consequence of the max-flow min-cut Theorem. This result will provide us with a sufficient condition for the existence of a unit flow with finite energy.

\begin{proposition} \label{propLyons}
For any collection of positive numbers $(u_e)_{e\in E}$ such that $\sum_{e:|e|=1}u_{e}=1$ and
\begin{align}\label{condcor}
\inf_{\pi\in\Pi}\sum_{e\in\pi} u_{e}c(e)>0,
\end{align}
there exists  a nonzero flow whose energy is upper-bounded by
\[
\lim_{n\to\infty}\max_{e\in E: |e|=n}\sum_{g\le e} u_g.
\]
\end{proposition}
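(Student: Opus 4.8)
The plan is to follow the classical argument of Lyons \cite{L90} (Corollary 4.2), which derives the existence of a flow with controlled energy from the max-flow min-cut theorem. First I would set up an auxiliary network on $\mathcal{T}$ (or on the finite truncations $\mathcal{T}_n$ obtained by cutting at generation $n$) in which the capacity of an edge $e$ is declared to be $c_e := u_e\, c(e)$; the quantity $\sum_{g\le e} u_g$ should then be reinterpreted as the ``potential'' or total capacity accumulated along the ray to $e^+$. The hypothesis $\sum_{e:|e|=1}u_e=1$ normalizes this so that the natural flow one builds is a unit flow up to a harmless constant. The key structural fact is that a cutset $\pi\in\Pi$ is exactly a set of edges separating $\r$ from $\infty$ (equivalently, from generation $n$ in the truncated graph), so $\inf_{\pi}\sum_{e\in\pi} c_e = \inf_\pi \sum_{e\in\pi} u_e c(e) =: \kappa > 0$ by \eqref{condcor}, and this infimum is precisely the min-cut of the network.

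The main step is then to invoke the max-flow min-cut theorem on each finite truncation $\mathcal{T}_n$: there exists an admissible flow $\theta^{(n)}$ from $\r$ to generation $n$ of strength equal to the min-cut of $\mathcal{T}_n$, which is $\ge \kappa$, and which respects the capacities $\theta^{(n)}_e \le u_e c(e)$ on every edge. Using $\theta^{(n)}_e \le u_e c(e)$ one bounds the energy of $\theta^{(n)}$ with respect to the conductances $c(e)$ by
\[
\sum_{e\in E}\frac{(\theta^{(n)}_e)^2}{c(e)} \le \sum_{e}\frac{(u_e c(e))\,\theta^{(n)}_e}{c(e)} = \sum_e u_e\, \theta^{(n)}_e,
\]
and then re-summing along rays (each edge $e$ carries flow that must have passed through every ancestor $g\le e$, and the flow is conservative) one rewrites $\sum_e u_e\theta^{(n)}_e$ as a sum over the ``leaves'' at generation $n$ weighted by $\sum_{g\le e} u_g$, giving the upper bound $\max_{|e|=n}\sum_{g\le e}u_g$ times the total strength, which after the normalization yields exactly $\lim_{n\to\infty}\max_{|e|=n}\sum_{g\le e} u_g$. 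Finally, a weak-limit / diagonal extraction argument (edge by edge, since the flows are uniformly bounded) produces a limiting flow $\theta$ on the infinite tree which is nonzero (its strength is $\ge\kappa>0$) and whose energy is at most the stated limit, by lower semicontinuity of the energy under pointwise limits (Fatou).

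The step I expect to be the main obstacle is the passage from the finite truncations to the infinite tree: one must check that the limiting object is genuinely a flow to infinity (conservation survives the limit, no mass escapes ``sideways''), that it is nonzero, and that its energy does not jump up in the limit. This is standard — it is exactly how Lyons' Corollary 4.2 is proved — but it requires the uniform capacity bounds $\theta^{(n)}_e\le u_ec(e)$ together with local finiteness of $\mathcal{T}$ to get pointwise subsequential convergence, and then Fatou's lemma for the energy. The bookkeeping identity relating $\sum_e u_e\theta_e$ to a sum over generation-$n$ edges weighted by $\sum_{g\le e}u_g$ is the other place to be careful, but it is a routine re-summation using flow conservation and the fact that $\prod$-type weights telescope along rays. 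Since the statement is quoted from \cite{CKS} (itself after \cite{L90}), I would present the argument compactly and refer to those sources for the details of the max-flow min-cut input.
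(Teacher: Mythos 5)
Your plan is correct and is essentially the argument the paper relies on: the paper gives no proof of Proposition \ref{propLyons}, quoting it from \cite{CKS} as a consequence of the max-flow min-cut theorem in the spirit of Corollary 4.2 of \cite{L90}, which is exactly the route you take (capacities $u_e c(e)$, min-cut bounded below by \eqref{condcor}, energy bound via $\theta_e\le u_e c(e)$ and resummation along rays, then a compactness/Fatou passage to the infinite tree). The only detail worth pinning down is the normalization of the flow's strength so that the final bound is the stated limit rather than that limit times the min-cut value, but this is routine and harmless for the application.
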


The proof is ended once we have proved the following proposition.

\begin{proposition}\label{proptrans}
If $RT(\mathcal{T},\psi)>1$, then a quasi independent percolation such that \eqref{gendefperco} holds is supercritical.
\end{proposition}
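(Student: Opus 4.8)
The plan is to combine the pieces already assembled: given $RT(\mathcal{T},\psi)>1$, I need to produce a unit flow of finite energy on $\mathcal{T}$ equipped with the adapted conductances $c(e)$ of Definition~\ref{defmodcond1}, which by Lemma~\ref{simple} (using that \eqref{condition1} holds) forces $\Ccal_{\rm eff}>0$, hence by Proposition~\ref{proplowerboundperco} the quasi-independent percolation is supercritical. So the whole task reduces to constructing suitable weights $(u_e)$ to feed into Proposition~\ref{propLyons}.

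First I would unwind the definitions. Since $RT(\mathcal{T},\psi)>1$, by \eqref{equ:RT} there exists $\gamma>1$ with $\inf_{\pi\in\Pi}\sum_{e\in\pi}\Psi(e)^\gamma>0$, where $\Psi(e)=\prod_{g\le e}\psi(g)$. Fix such a $\gamma$ and set $\alpha = 1-1/\gamma \in (0,1)$, so that $\gamma(1-\alpha)=1$. The natural choice is $u_e := \Psi(e)^{\gamma\alpha}\cdot \big(\text{normalising constant at generation }1\big)$, or more precisely a telescoping-type weight built from $\psi$; I would define $u_e$ so that $u_e\, c(e)$ is comparable to $\Psi(e)^{\gamma(1-\alpha)+\alpha'}$ for an appropriate exponent, making $\inf_\pi \sum_{e\in\pi} u_e c(e)>0$ follow directly from the defining property of $RT$. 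Concretely, recalling $c(e)=\frac{1}{1-\psi(e)}\Psi(e)$, and noting $\frac{1}{1-\psi(e)}\ge 1$, one gets $u_e c(e)\ge u_e \Psi(e)$; choosing $u_e$ proportional to $\Psi(e)^{\gamma-1}$ (suitably normalised so that $\sum_{|e|=1}u_e=1$, which is legitimate since edges at generation $1$ are open almost surely and $\psi\equiv 1$ there) yields $u_e c(e)\ge \text{const}\cdot\Psi(e)^{\gamma}$, so that \eqref{condcor} holds by the choice of $\gamma$. Then Proposition~\ref{propLyons} gives a nonzero flow with energy at most $\lim_{n\to\infty}\max_{|e|=n}\sum_{g\le e}u_g$, and since $u_g = \text{const}\cdot\Psi(g)^{\gamma-1}$ with $\Psi(g)\le \psi(e_1)\cdots$ decaying geometrically-or-faster along any ray (each factor $\psi(g)<1$ for $|g|>1$, and in fact $\sum_g \Psi(g)^{\gamma-1}$ along a ray converges because the partial products $\Psi$ shrink and $\gamma-1>0$ — this needs the mild observation that $\Psi(g)\to 0$ along every infinite ray, which follows since otherwise $RT$ considerations would be violated, or one argues directly), the energy is finite. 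Rescaling the resulting flow to a unit flow preserves finiteness of energy.

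The main obstacle I anticipate is the energy bound: Proposition~\ref{propLyons} only controls $\sum_{g\le e}u_g$ along rays, so I must verify that $\sup$ over $e$ at generation $n$ of this partial sum stays bounded (indeed tends to a finite limit) as $n\to\infty$. This requires showing $\sum_{g\le e}\Psi(g)^{\gamma-1}$ is uniformly bounded over all $e$, i.e. that along every self-avoiding ray the series $\sum_k \Psi(\text{edge at level }k)^{\gamma-1}$ converges uniformly. Since $\Psi$ is multiplicative and each factor $\psi(g)\in(0,1)$ for $|g|>1$, the partial products are non-increasing along a ray; the quantitative rate is where care is needed, and I would extract it from the fact that $\inf_\pi\sum_{e\in\pi}\Psi(e)^\gamma>0$ together with a comparison between $\Psi^{\gamma-1}$ and $\Psi^{\gamma}$ along a ray — using that on a single ray the cutsets are singletons $\{e\}$, so $\inf_{k}\Psi(e_k)^{\gamma}>0$ would already be enough were it not for the need of uniformity over rays; instead one invokes the standard trick (as in Lyons' argument) that the weights $u_e$ can be chosen as $u_e = \Psi(e)^{\gamma-1}/Z$ with $Z$ chosen so the generation-$1$ normalisation holds, and the geometric-type decay of $\Psi$ gives $\sum_{g\le e}u_g \le \text{const}\cdot\sum_{k\ge 1}\Psi(e_k)^{\gamma-1} \le \text{const}'<\infty$ uniformly. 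I would write this last estimate out carefully, as it is the only non-formal point; everything else is bookkeeping with the already-proved Propositions~\ref{proplowerboundperco}, \ref{propLyons} and Lemma~\ref{simple}.

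\begin{proof}[Proof of Proposition \ref{proptrans}]
Sketch following the plan above: pick $\gamma>1$ with $\inf_{\pi\in\Pi}\sum_{e\in\pi}\Psi(e)^\gamma>0$ from the definition of $RT(\mathcal{T},\psi)>1$; set $u_e$ proportional to $\Psi(e)^{\gamma-1}$, normalised so that $\sum_{|e|=1}u_e=1$; since $c(e)\ge\Psi(e)$ for $|e|>1$ and $c(e)=1=\Psi(e)$ for $|e|=1$, one has $\inf_{\pi\in\Pi}\sum_{e\in\pi}u_e c(e)\ge \text{const}\cdot\inf_{\pi\in\Pi}\sum_{e\in\pi}\Psi(e)^\gamma>0$, so \eqref{condcor} holds; by Proposition \ref{propLyons} there is a nonzero flow of energy at most $\lim_{n\to\infty}\max_{|e|=n}\sum_{g\le e}u_g$, which is finite because $\Psi$ is non-increasing and strictly contracting along every ray, so $\sum_{g\le e}\Psi(g)^{\gamma-1}$ is uniformly bounded; normalising this flow to a unit flow of finite energy and applying Lemma \ref{simple} (valid since \eqref{condition1} is assumed in the transience part of Theorem \ref{maintheorem}), we conclude $\Ccal_{\rm eff}>0$, hence by Proposition \ref{proplowerboundperco} the quasi-independent percolation is supercritical.
\end{proof}
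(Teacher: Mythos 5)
Your overall scaffolding is right (Proposition~\ref{propLyons} $\to$ flow of finite energy $\to$ Lemma~\ref{simple} $\to$ $\Ccal_{\rm eff}>0$ $\to$ Proposition~\ref{proplowerboundperco}), and the choice of $\gamma\in(1,RT(\mathcal T,\psi))$ together with the lower bound $u_e\,c(e)\ge\Psi(e)^\gamma$ for \eqref{condcor} is sound. But your choice $u_e\propto\Psi(e)^{\gamma-1}$ has a genuine gap in the energy estimate, and it is precisely the point you flagged as the ``only non-formal'' one. The paper takes
\[
u_e=\bigl(1-\psi(e)\bigr)\,\Psi(e)^{\gamma-1}\quad(|e|>1),
\]
and the extra factor $1-\psi(e)$ is not cosmetic: it is what makes $\sum_{g\le e}u_g$ uniformly bounded. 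Concretely, with $P_k=\Psi(g_k)$ the partial products along a ray $g_1<\dots<g_n=e$, concavity of $x\mapsto x^{\gamma-1}$ gives $1-\psi(g_k)^{\gamma-1}\ge(\gamma-1)\bigl(1-\psi(g_k)\bigr)$, hence
\[
\bigl(1-\psi(g_k)\bigr)P_k^{\gamma-1}\le\frac{1}{\gamma-1}\bigl(1-\psi(g_k)^{\gamma-1}\bigr)P_{k-1}^{\gamma-1}=\frac{1}{\gamma-1}\bigl(P_{k-1}^{\gamma-1}-P_k^{\gamma-1}\bigr),
\]
a telescoping sum bounded by $1/(\gamma-1)$, uniformly over rays — this is the content of Proposition~17 of \cite{CKS} invoked in the paper.

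Without the factor $1-\psi(e)$, your claim that $\sum_{g\le e}\Psi(g)^{\gamma-1}$ is uniformly bounded because $\Psi$ is ``non-increasing and strictly contracting along every ray'' is false in exactly the regime this proposition is designed for. Take the unbiased $M$-DRW, where $\psi(g)=\bigl((|g|-1)/|g|\bigr)^{M+1}$ and $\Psi(g)=|g|^{-(M+1)}$; then $\sum_{g\le e}\Psi(g)^{\gamma-1}\asymp\sum_{k\le|e|}k^{-(M+1)(\gamma-1)}$, which diverges as $|e|\to\infty$ whenever $(M+1)(\gamma-1)\le1$. Since $\gamma$ may be taken arbitrarily close to $1$ (indeed $RT(\mathcal T,\psi)$ may itself be barely above $1$), there is no way to guarantee $(M+1)(\gamma-1)>1$. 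So the energy bound in Proposition~\ref{propLyons} cannot be established with your $u_e$, and the proof does not close. Your other auxiliary observation, ``$\Psi(g)\to0$ along every ray, otherwise $RT$ considerations would be violated,'' is also unsupported and in any case would not give summability of $\Psi^{\gamma-1}$: polynomial decay to zero is exactly the problematic case.

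To repair the argument, replace your $u_e$ by the paper's $u_e=(1-\psi(e))\Psi(e)^{\gamma-1}$. Then $u_e\,c(e)=\Psi(e)^\gamma$ exactly (the $1-\psi(e)$ factors cancel against $c(e)=\Psi(e)/(1-\psi(e))$), so \eqref{condcor} is immediate, and the telescoping bound above gives the uniform energy control.
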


\begin{proof}
This proof follows line by line the proof of Proposition 18 in \cite{CKS}.\\
Fix a real number $\gamma\in \left(1,RT({\mathcal{T}},\psi)\right)$ and, for any edge $e\in E$, let us define $u_{e}=1$ if $|e|=1$ and, if $|e|>1$, 
\[
u_e=\left(1-\psi(e)\right)\prod_{g\le e}\left(\psi(g)\right)^{\gamma-1}.
\]
On one hand, we have that, for any $e\in E$,
\begin{align}\label{presque}
\sum_{g\le e}u_{g}\le C_\gamma.
\end{align}
Indeed, for each $e\in E$, we can apply Proposition 17 of \cite{CKS} to functions $f_e$ defined by $f_e(0)=1$ and, for $n\ge1$,  $f_e(n)=1-\psi(g)$ with $g$ the unique edge such that $g\le e$ and $|g|=n\wedge |e|$. We emphasize that \eqref{presque} holds with a uniform bound.\\
On the other hand, using \eqref{modcond}, we have
\begin{align*}
\inf_{\pi\in \Pi}\sum_{e\in\pi} u_{e}c(e) &= \inf_{\pi\in \Pi}\sum_{e\in\pi} \left(\left(1-\psi(e)\right)\left(\Psi(e)\right)^{\gamma-1}\right)\times\frac{\Psi(e)}{1-\psi(e)}\\
&= \inf_{\pi \in \Pi }\sum_{e\in\pi} \left(\Psi(e)\right)^{\gamma}>0.
\end{align*}
Proposition \ref{propLyons} and \eqref{presque} imply that there exists a nonzero flow $(\theta_e)$ whose energy is bounded as
\begin{align*}
\sum_{e\in E}\frac{\left(\theta_{e}\right)^2}{c(e)}\le \lim_{n\to\infty}\max_{e \in E: |e|=n}\sum_{g\le e} u_{g}\le C_\gamma.
\end{align*}
Therefore, there exists a unit flow with finite energy and Lemma \ref{simple} implies the result. 
\hfill
\end{proof}

%

\end{document}